\definecolor{armygreen}{rgb}{0.29, 0.33, 0.13}
\definecolor{amber}{rgb}{1.0, 0.49, 0.0}
\definecolor{amethyst}{rgb}{0.6, 0.4, 0.8}
\newcommand{\cal}{\mathcal}
\newcommand{\GAP}{\textsf{GAP}}
\newcommand{\N}{\mathbb{N}}
\newcommand{\Z}{\mathbb{Z}}
\newcommand{\Sym}{\mathbb{S}}
\newtheorem{theorem}{Theorem}[section]
\newtheorem{corollary}[theorem]{Corollary}
\newtheorem{lemma}[theorem]{Lemma}
\newtheorem{proposition}[theorem]{Proposition}
\newtheorem{definition}[theorem]{Definition}
\newtheorem{example}[theorem]{Example}
\newtheorem{remark}[theorem]{Remark}
\newtheorem{conjecture}[theorem]{Conjecture}
\newtheorem{problem}[theorem]{Problem}
\newtheorem{question}[theorem]{Question}
\newtheorem*{question*}{Question}
\renewcommand{\to}{\rightarrow}
\newcommand{\kc}[2]{ {\rm Col}_{#1}{(#2)}}
\newcommand{\skc}[2]{ {\rm SCol}_{#1}{(#2)}}
\begin{document}

\title[Quandle Invariants of Composite Knots and Extensions]{Quandle Coloring and Cocycle Invariants of Composite Knots and Abelian Extensions}
\keywords{Quandles, knot colorings, cocycle invariants, connected sum, chirality}
\author[E. Clark, M. Saito, L. Vendramin]{W. Edwin Clark, Masahico Saito, Leandro Vendramin}

\address{
E. Clark and M. Saito:
Department of Mathematics and Statistics, University of South Florida, Tampa, Florida, U.S.A.
}
\email{wclark@mail.usf.edu}
\email{saito@usf.edu}

\address{
L. Vendramin:
Departamento de Mathem\'{a}tica, Facultad de Ciencias Exactas y Naturales,
Universidad de Buenos Aires,
Buenos Aires, Argentina
}
\email{lvendramin@dm.uba.ar}

\date{}

\maketitle

%\begin{flushleft}
%\small{
%\noindent $(A)$
%Department of Mathematics and Statistics, University of South Florida, Tampa, Florida, U.S.A.
%
%\noindent $(B)$
%Departamento de Mathem\'{a}tica, Facultad de Ciencias Exactas y Naturales,
%Universidad de Buenos Aires,
%Buenos Aires, Argentina
%}
%\end{flushleft}

\begin{abstract}
Quandle colorings and cocycle invariants are studied for composite knots, and
applied to  chirality and abelian extensions.  The square and granny knots, for
example, can be distinguished by quandle colorings, so that a trefoil and its
mirror can be distinguished by quandle coloring %invariants 
of composite knots. We
investigate this and related phenomena.  Quandle cocycle invariants   are
studied  in relation to quandle coloring of 
 the connected sum, and  formulas are given for computing
the cocycle  invariant from the number of colorings of composite knots.  Relations to
corresponding abelian extensions of quandles are studied, and extensions are
examined for the table of small connected quandles, called Rig quandles.
Computer calculations are presented, and summaries of outputs are discussed.
%
%\bigskip
%
%\noindent
%{\it Key words}:
%Quandles, knot colorings, cocycle invariants, connected sum, chirality.
\end{abstract}

\section*{Introduction}

Sets with certain self-distributive operations called  {\it quandles}
have been studied since  the 1940s 
in various areas with different names. 
%They have been studied,
%for example, as  algebraic systems for symmetries \cite{Taka},
%as quasi-groups~\cite{galkin}, and in relation to modules~\cite{XH,Sam}.
%Typical examples of quandles arise from  conjugacy classes of groups
%and from modules over the integral Laurent polynomial ring
%$\Z [t, t^{-1}]$, called Alexander quandles. 
The {\it fundamental quandle} of a knot
was defined in a manner similar to the
fundamental group \cite{Joyce,Mat} of a knot, which made quandles an important
tool in knot theory.  The number of homomorphisms from the fundamental
quandle to a fixed finite quandle has an interpretation as colorings
of knot diagrams by quandle elements, and has been widely used as a
knot invariant. Algebraic homology theories for quandles 
were defined \cite{CJKLS,FRS1}, and
 investigated in \cite{LN,Moc,NP1,Nos}.
 Extensions of quandles by cocycles have been studied \cite{AG,CENS,Eis3}, and
invariants derived thereof are applied to various properties of knots and knotted surfaces (see \cite{CKS}
and references therein).

Tables of small quandles have been made previously (e.g., \cite{CKS,Clau,EMR}). %here
Computations using \GAP~\cite{Leandro} significantly expanded the list for
connected quandles.  
These
quandles may be found in the \GAP~package Rig \cite{rig}.  
Rig includes all connected quandles of order less than 48. %36. 
We refer to
these quandles as { \it Rig} quandles, and use the notation $Q(n,i)$
for the $i$-th quandle of order $n$ in the list of Rig quandles.
As a matrix $Q(n,i)$
is the transpose of the quandle matrix {\sf SmallQuandle}$(n,i)$ in
\cite{rig}. 
%add:
In this paper, however, we focus on Rig quandles of order less than 36.
There are 431 such quandles. 

In \cite{CESY}, 
it was investigated 
 to what extent the number of quandle
colorings of a knot by a finite quandle can distinguish the prime oriented
knots with at most 12 crossings in the knot table at KnotInfo
\cite{KI}.
It is known that quandle colorings do not distinguish $K$ from its reversed mirror, $rm(K)$. 
It is also known \cite{CSS} that the quandle cocycle invariant can distinguish 
a trefoil $3_1$ from its mirror image. Since $3_1$ is reversible, it cannot be distinguished from its mirror by quandle colorings.
However,
 we show here that quandle colorings can be used via connected sums
to distinguish $K$ from $rm(K)$ for many knots (we conjecture for all knots $K$ such that $K \neq rm(K)$).  
In particular, for some reversible knots, we can distinguish $K$ from $m(K)$
using this technique. 
For example, by distinguishing  the square and granny knots by quandle colorings, we
distinguish a trefoil from its mirror image. 
In this paper, we investigate this phenomenon, and other properties and applications of 
quandle invariants under connected sum.
In particular, we relate quandle colorings of composite knots to 
quandle $2$-cocycle invariants. 

We also note that  quandle colorings of the  connected sum can be used to
recover  quandle cocycle invariants in many cases. 
It is well-known that
quandle $2$-cocycles give rise to abelian extensions of quandles, see for
example \cite{CENS}. We investigate the relations among abelian extensions that
result from our computations, and their properties. As a result, several
problems arise naturally.

An important part of this work depends on computer calculations. For that
reason, we developed algorithms and techniques for computing quandle
(co)homology groups and explicit quandle $2$-cocycles, abelian extensions of
quandles, dynamical cocycles and non-abelian extensions, colorings and quandle cocycle
invariants of classical and virtual knots. The algorithms are freely available
in the \GAP~package Rig. Several tables with all these calculations are available
online at the Wiki page of Rig: 
\url{http://github.com/vendramin/rig/wiki}.
%\url{http://code.google.com/p/rig/w/list}.

% \smallskip
The paper is organized as follows. Preliminary material necessary for the paper
follows this section, and it is shown that the number of quandle colorings by
finite quandles can distinguish unknot in Section~\ref{sec-unknot}.  Quandle
colorings of composite knots are studied  in Section~\ref{sec-connectsum}.  In
Section~\ref{sec-rev}, quandle colorings of composite knots are applied to
distinguish knots from their 
reversed mirror images,  %mirrors, 
relations to the quandle cocycle
invariant are discussed, and computer calculations are presented.  In
Section~\ref{sec-recover}, a method of computing quandle cocycle invariants
from colorings of composite knots is studied.  Relations to abelian extensions
of quandles are examined in Section~\ref{sec-ext}.  Further considerations
regarding extensions of Rig quandles are presented %studied 
in Section 7. For convenience
of the reader, we collect problems, questions and conjectures posed all over
the text in Section 8.

\section{Preliminaries}\label{sec-prelim}

We briefly review some definitions and examples of quandles. 
More details can be found, for example, in \cite{AG,CKS,FRS1}. 
 
A {\it quandle} $X$ is a 
set with a binary operation $(a, b) \mapsto a * b$
satisfying the following conditions.
\begin{eqnarray*}
& &\mbox{\rm (1) \ }   \mbox{\rm  For any $a \in X$,
$a* a =a$.} \label{axiom1} \\
& & \mbox{\rm (2) \ }\mbox{\rm For any $b,c \in X$, there is a unique $a \in X$ such that 
$ a*b=c$.} \label{axiom2} \\
& &\mbox{\rm (3) \ }  
\mbox{\rm For any $a,b,c \in X$, we have
$ (a*b)*c=(a*c)*(b*c). $} \label{axiom3} 
\end{eqnarray*}
 A {\it quandle homomorphism} between two quandles $X, Y$ is
 a map $f: X \rightarrow Y$ such that $f(a*_X b)=f(a) *_Y f(b) $, where
 $*_X$ and $*_Y$ 
 denote 
 the quandle operations of $X$ and $Y$, respectively.
 A {\it quandle isomorphism} is a bijective quandle homomorphism, and 
 two quandles are {\it isomorphic} if there is a quandle isomorphism 
 between them.

\begin{example}
    {\rm
    Any non-empty set $X$ with the operation $a*b=a$ for any $a,b \in X$ is
    a quandle called a \emph{trivial} quandle.
    }
\end{example}

\begin{example}
    {\rm 
    A conjugacy class $X$ of a group $G$ is a quandle with the quandle operation $a*b=b^{-1}
    a b $.  We call this a \emph{conjugation quandle}. 
    }
\end{example}

\begin{example}
    {\rm 
    Let $X$ and $Y$ be quandles. Then $X\times Y$ is a quandle with
    $(x,y)*(x',y')=(x*_Xx',y*_Yy')$ for all $x,x'\in X$ and $y,y'\in Y$.
    }
\end{example}

\begin{example}[Joyce \cite{Joyce}]
    {\rm 
    A \emph{generalized Alexander quandle} is defined  by  
    a pair $(G, f)$ where 
    $G$ is a  group,  $f\in{\rm Aut}(G)$,
    and the quandle operation is defined by 
    $x*y=f(xy^{-1}) y $. 
    If $G$ is abelian, this is called an \emph{Alexander} (or {\it affine})  quandle. 
    }
\end{example}

\begin{example}
    {\rm 
    A function $\phi: X \times X \rightarrow A$ for an abelian group $A$ is called
    a \emph{quandle $2$-cocycle}  \cite{CJKLS} if it satisfies $$ \phi (x, y)-
    \phi(x,z)+ \phi(x*y, z) - \phi(x*z, y*z)=0$$ 
    for any $x,y,z \in X$ and 
    $\phi(x,x)=0$ for any $x\in X$. For a quandle $2$-cocycle $\phi$, $E=X \times A$ becomes a quandle by
    \[
    (x, a) * (y, b)=(x*y, a+\phi(x,y))
    \]
    for $x, y \in X$, $a,b \in A$, denoted by
    $E(X, A, \phi)$ or simply $E(X, A)$, and it is called an \emph{abelian
    extension} of $X$ by $A$. 
    The set of quandle $2$-cocycles of $X$ with coefficients in $A$ is denoted
    by $Z^2_Q(X,A)$.  Two cocycles $\phi_1$ and $\phi_2$ are
    \emph{cohomologous} if there is a function $\gamma\colon X\to A$ such that
    \[
        \phi_2(x,y)=-\gamma(x)+\phi_1(x,y)+\gamma(x*y)
    \]
    for any $x,y\in X$.  The set of equivalence classes is a group and it is
    denoted by $H^2_Q(X,A)$.  See  \cite{CENS} for more information on abelian
    extensions of quandles and \cite{CJKLS,CJKS,CJKS2} for more on quandle
    cohomology.
    }
\end{example}

\begin{example}
{\rm
 In \cite{AG}, \emph{ extensions by constant $2$-cocycles} 
 were
defined as follows.  For a quandle $X$ and a set $S$,  a \emph{constant quandle
cocycle} is a map 
\[
\beta: X \times X \rightarrow {\rm Sym}(S),
\]
where ${\rm Sym}(S)$ is the symmetric group on $S$, such that  $X \times S$ has
a  quandle structure by $(x, t) * (y, s) = (x*y, \beta_{x,y}(t) )$ for $x,y\in
X$ and $s,t\in S$ (see \cite{AG} for details).  This quandle is denoted by $X
\times_{\beta} S$.  The map $\beta$ satisfies the \emph{ constant cocycle
condition} $\beta_{x*y, z} \beta_{x, y} = \beta_{x*z, y*z } \beta_{x, z}$ for
any $x,y,z \in X$ and the quandle condition $\beta_{x,x}={\rm id}$ for any $x
\in X$.  
%When an extension by a constant cocycle is not abelian, we call it non-abelian. 
Following \cite{AG}, we also call these extensions {\it non-abelian} extensions.
}
\end{example}

  Let $X$ be a quandle.
  The {\it right translation}  ${\cal R}_a:X \rightarrow  X$, by $a \in X$, is defined
by ${\cal R}_a(x) = x*a$ for $x \in X$. 
Then ${\cal R}_a$ is a permutation of $X$ by Axiom (2).
The subgroup of 
%${\rm Sym}(X)$,  the symmetric group on $X$, 
 ${\rm Aut}(X)$, the quandle automorphism group, %
 generated by the permutations ${\cal R}_a$, $a \in X$, is 
called the {\it inner automorphism group} of $X$,  and is 
denoted by ${\rm Inn}(X)$. 
A quandle is {\it connected} if ${\rm Inn}(X)$ acts transitively on $X$.
A quandle is {\it homogeneous} if ${\rm Aut}(X)$ acts transitively on $X$. %
A quandle is {\it faithful} if the mapping 
$\varphi: X \rightarrow {\rm Inn}(X)$ defined by $ \varphi(a)={\cal R}_a$ is 
an injection 
from $X$ to ${\rm Inn}(X)$.
We note that abelian as well as non-abelian extensions are not faithful.
 The operation $\bar{*}$ on $X$ defined by $a\ \bar{*}\ b= {\cal R}_b^{-1} (a) $
is a quandle operation, and $(X,  \bar{*}) $ is called the {\it dual} quandle of $(X, *)$.
%We also denote the dual of $X$ by $X^*$.
%If  $X^*$ is isomorphic to  $X$, then $X$ is called {\it self-dual}.
A quandle $X$  is called a {\it kei} \cite{Taka}, or {\it involutory},  if 
$(x*y)*y=x$ for all  $x, y \in X$.

 %We write $K = K'$ to denote that there is an orientation preserving
 %homeomorphism of the $3$-sphere $\Sym^3$ that takes $K$ to $K'$ preserving the
 %orientations of $K$ and $K'$.  
 A \emph{coloring} of an oriented knot
 diagram by a quandle $X$ is a map ${\cal C}$ %${\cal C}: {\cal A} \rightarrow X$
 from the set of arcs ${\cal A}$ of the diagram to $X$ such that the
 image of the map satisfies the relation depicted in
 Figure~\ref{coloredXing} at each crossing.  More details can be found in
 \cite{CKS,Eis3}, for example.  A coloring that assigns the same
 element of $X$ to all the arcs is called trivial, otherwise
 non-trivial.  The number of colorings of a knot diagram by a finite
 quandle is known to be independent of the choice of  diagram, and
 hence is a knot invariant.
 We denote by $\skc{X}{K}$ and $\kc{X}{K}$ the set and the number of colorings of $K$ by $X$.
%In Figure~\ref{figure8}, a coloring of a figure-eight knot is
%indicated.  In the figure, the elements $x, y, z $ of a quandle are
%assigned at the top arcs.  The coloring conditions are applied to
%color other arcs as depicted.  Since the bottom arcs are connected to
%the top arcs, we need the conditions
%\begin{eqnarray*}
%x &=& z \ \overline{*} \ (x * y) , \\
%y &=& (x*y) \ \overline{*} \ ( y * ( z \ \overline{*} \ (x * y) ) ) ,\\
%z &=& y * ( z \ \overline{*} \ (x * y) ) . 
%\end{eqnarray*}
%to obtain a coloring.
%Each crossing corresponds to a standard generator $\sigma_i$ of the
%$3$-strand braid group or its inverse, and the closure of $\sigma_1
%\sigma_2^{-1} \sigma_1 \sigma_2^{-1} $ represents the figure-eight
%knot.  The dotted lines indicate the closure, see \cite{Rolf} for more
%details on braids.  We give the orientation downward for braids, see
%the figure.  { \it For the rest of this paper all knots will be prime
%  and oriented.}

%
%\begin{figure}[ht]
%\begin{minipage}[b]{0.5\linewidth}
%\centering
%\includegraphics[scale=.5]{crossing}
%\caption{The coloring rule at crossings}
%\label{crossing}
%\end{minipage}
%%\hspace{0.5cm}
%\begin{minipage}[b]{0.5\linewidth}
%\centering
%\includegraphics[scale=.4]{figure8}
%\caption{Figure-eight   knot as the closure of the braid $\sigma_1 \sigma_2^{-1}  \sigma_1 \sigma_2^{-1}   $}
%\label{figure8}
%\end{minipage}
%\end{figure}

The fundamental quandle is defined in a manner similar to the
fundamental group \cite{Joyce,Mat}.  A {\it presentation} of a quandle
is defined in a manner similar to groups as well, and a presentation
of the fundamental quandle is obtained from a knot diagram (see, for
example, \cite{FR}), by assigning generators to arcs of a knot
diagram, and relations corresponding to crossings.  The set of
colorings of a knot diagram $K$ by a quandle $X$, then, is in
one-to-one correspondence with the set of quandle homomorphisms from
the fundamental quandle of $K$ to $X$.

% If $Q$ is a quandle and $K$ is a
%knot we denote by $\nkc{Q}{K}$ the number of non-trivial colorings of
%$K$ by $Q$. The number of colorings including trivial colorings is 
%$\kc{Q}{K} = \nkc{Q}{K} + |Q|$, where $|Q|$ is the order of $Q$. It
%is well known that for each quandle Q, $\kc{Q}{K}$ is an invariant of
%knots (\cite{Jozef}, for example). We say that the quandle $Q$ {\it
%  distinguishes} knots $K$ and $K'$ if $\kc{Q}{K} \neq \kc{Q}{K'}$.

In this paper all knots are oriented. 
Let $m: \Sym^3 \rightarrow \Sym^3$ be an orientation reversing homeomorphism of  the
$3$-sphere.
For a knot $K$ contained in $\Sym^3$, $m(K)$ is the mirror image of $K$, and $r(K)$ is  the knot $K$
with its orientation reversed.  We regard $m$ and $r$ as maps on equivalence
classes of knots.  We consider the group $\mathcal{G} = \{ 1,r,m,rm \} $ acting
on the set of all oriented knots.  For each knot $K$ let $\mathcal{G}(K) =\{
K,r(K),m(K),rm(K) \}$ be the orbit of $K$ under the action of $\mathcal{G}$.
 
 For knots $K$ and $K'$, we write $K =
K'$ to denote that there is an orientation preserving homeomorphism of 
 $\Sym^3$ that takes $K$ to $K'$ preserving the orientations of $K$
and $K'$.  By a symmetry  we mean that a knot (type) $K$ remains unchanged
under one of $r$, $m$, $rm$.  As in the definition of {\it symmetry type} in
\cite{KI} we say that  a knot $K$ is
\begin{itemize}
\item { {\it reversible} if the only symmetry it has is $K = r(K)$},
\item { {\it negative amphicheiral} if the only symmetry it has is $K = rm(K)$},
\item {  {\it positive amphicheiral} if the only symmetry it has is $K = m(K)$},
\item {{\it chiral} if it has none of these symmetries},
\item {{\it fully amphicheiral} if $K =r(K)=m(K)= rm(K)$}, i.e. if $K$ has all three symmetries.
\end{itemize}
 The symmetry type of each knot on at most 12 crossings is given at \cite{KI}. Thus each of the $2977$ knots $K$ given there represents 
 $1$, $2$ or $4$ %as many as four 
 knots depending on the symmetry type. % $K, m(K), r(K)$ and $ rm(K)$. 
 Among the $2977$ knots, there are 
 $1580$ reversible, $47$ negative amphicheiral, $1$ positive amphicheiral,  
$1319$ chiral, and $30$ fully amphicheiral knots.
%"chiral", 1319
%"reversible", 1580
%"fully amphicheiral", 30
%"negative amphicheiral", 47
%"positive amphicheiral", 1
%
%The only knots that are possible to distinguish by Col_X from their
%mirror images are those that are chiral or negative amphicheiral---1366
%in all. In our coloring paper we were able to distinguish all but 308 such
%knots from their mirror images.  Since then Larry Dunning and I have
%distinguished all but 10 of the 1366 by number of quandle colorings.

It is known \cite{Joyce,Mat} that 
the fundamental quandles of $K$ and $K'$ are isomorphic if and only if 
$K=K'$ or $K=rm(K')$.

\begin{figure}[htb]
    \begin{center}
   \includegraphics[width=3in]{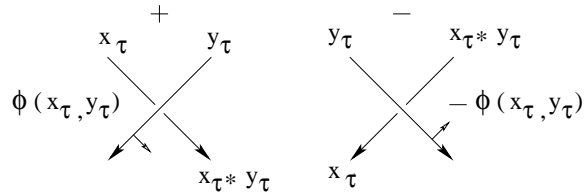}\\
    \caption{Colored crossings and cocycle weights }\label{coloredXing}
    \end{center}
\end{figure}

%\noindent
%{\bf On cocycle invariants}

Let $X$ be a quandle, and $\phi$ be a $2$-cocycle with  coefficient group $A$,
a finite abelian group; we use multiplicative notation. 
We regard $\phi$ as a function $\phi: X \times X \rightarrow A$.
For a coloring  of a knot diagram by a quandle $X$
as depicted in Figure~\ref{coloredXing} at a positive (left) and negative (right) crossing,
respectively, the pair $(x_\tau, y_\tau)$ of colors assigned to a pair of nearby arcs
is called the {\it source} colors. The third arc receives the color $x_\tau * y_\tau$. 

The $2$-cocycle (or cocycle, for short) invariant is an element of the group ring $\Z [A]$ 
defined by $\Phi_{\phi} (K) = \sum_{\cal C} \prod_{\tau} \phi(x_\tau, y_\tau)^{\epsilon(\tau)}$, where
the product ranges over all crossings $\tau$, the sum ranges over all colorings of a 
given knot diagram,
$(x_\tau, y_\tau)$ are source colors at the crossing $\tau$, and $\epsilon(\tau)$ 
is the sign of $\tau$ as specified in Figure~\ref{coloredXing}.

%A colored diagram represents a $2$-cycle $c \in Z_2^Q(X, A)$ (cf. \cite{CKS-geom}).
%In multiplicative notation $c=\prod_{\tau} (x_\tau, y_\tau)^{\epsilon(\tau) }\in A$, where 
%$\tau$ ranges over all crossings.
%Then the value $\prod_{\tau} \phi(x_\tau, y_\tau)^{\epsilon(\tau) }\in A$
%(in multiplicative notation) % , and $\sum_{\tau} \epsilon(\tau) \phi(x_\tau, y_\tau)\in A$ in additive notation)
%for this coloring, 
%that is a contribution of this coloring to the cocycle invariant,  is considered as the Kronecker product
%$\langle \phi, c \rangle = \phi(c) \in A$. 

%\bigskip
%
%\noindent
%{\bf On extensions of quandles and colorings}

%\textcolor{blue}{
%When $H^2_Q(X, A)$ contains a }
When $\Z_n$ is contained as a subgroup in $\Z_m$ and in $H^2_Q(X, \Z_m)$, 
and if a $2$-cocycle $\phi: X \times X \rightarrow \Z_n$ is such  that  $[\phi]$  is a generator of the subgroup %summand 
$\Z_n$ in $H^2_Q(X, \Z_m)$, 
then we say that $\phi$ is a generating $2$-cocycle of the subgroup $\Z_n$. 

\begin{lemma}\label{lem-homcohom} 
If the second homology group $H_2^Q(X,\Z)$ for $X$ satisfies % added % Let 
$H_2^Q(X,\Z) = \Z_{n_1} \oplus  \Z_{n_2} \oplus  \cdots \oplus  \Z_{n_k}$, $n_i >0$ for all $i$, 
%$n_i | n_{i+1}$   for $i = 1, \cdots, k-1$, 
%and if  
% $n_j | n$, then %  . Then if $n_j | n$ 
%${\rm gcd}(n_{j+1}, n)=n_j$, 
then 
we have
$$H^2_Q(X,\Z_{n} )\cong \Z_{n_1'} \oplus  \Z_{n_2'} \oplus  \cdots  \oplus   \Z_{n_k'} , $$ % H_2^Q(X, \Z) .$$
where $n_i'={\rm gcd}(n_{i}, n)$.
\end{lemma}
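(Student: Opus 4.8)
The plan is to deduce the statement from the Universal Coefficient Theorem applied to the quandle chain complex of $X$. Recall that in each degree the quandle chain group $C_n^Q(X)$ is free abelian, being the free abelian group on the set of non-degenerate $n$-tuples of elements of $X$ (see, e.g., \cite{CKS}); thus $C_*^Q(X)$ is a complex of free $\Z$-modules. Moreover the group $H^2_Q(X,A)$ introduced above --- quandle $2$-cocycles $Z^2_Q(X,A)$ modulo the cohomologous relation --- is precisely $H^2$ of the cochain complex ${\rm Hom}(C_*^Q(X),A)$. Hence the Universal Coefficient Theorem provides a short exact sequence
\[
0 \longrightarrow {\rm Ext}^1_{\Z}\big(H_1^Q(X,\Z),\Z_n\big) \longrightarrow H^2_Q(X,\Z_n) \longrightarrow {\rm Hom}_{\Z}\big(H_2^Q(X,\Z),\Z_n\big) \longrightarrow 0 .
\]

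The next step is to show that the ${\rm Ext}$ term vanishes, for which it suffices to know that $H_1^Q(X,\Z)$ is free abelian. In degree one the quandle and rack complexes coincide (there are no degenerate $1$-tuples), the differential $\partial_1$ is zero, and the image of $\partial_2$ in $C_1^Q(X)=\Z[X]$ is spanned by the elements $x - x*y$. Therefore $H_1^Q(X,\Z)\cong \Z[X]/\langle\, x - x*y : x,y\in X\,\rangle$, which is the free abelian group on the set of orbits of $X$ under the action of ${\rm Inn}(X)$; in particular it is torsion-free. (For the connected quandles relevant to this paper this group is just $\Z$.) Consequently ${\rm Ext}^1_{\Z}(H_1^Q(X,\Z),\Z_n)=0$, and the sequence above collapses to an isomorphism $H^2_Q(X,\Z_n)\cong {\rm Hom}_{\Z}(H_2^Q(X,\Z),\Z_n)$.

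Finally, I would substitute the hypothesis $H_2^Q(X,\Z)\cong \Z_{n_1}\oplus\cdots\oplus\Z_{n_k}$, use that ${\rm Hom}_{\Z}(-,\Z_n)$ carries finite direct sums to finite direct sums, and invoke the elementary identity ${\rm Hom}_{\Z}(\Z_m,\Z_n)\cong \Z_{{\rm gcd}(m,n)}$ (a homomorphism $\Z_m\to\Z_n$ is determined by the image of a generator, which must be annihilated by $m$, and the subgroup of such elements of $\Z_n$ is cyclic of order ${\rm gcd}(m,n)$). This yields $H^2_Q(X,\Z_n)\cong \Z_{n_1'}\oplus\cdots\oplus\Z_{n_k'}$ with $n_i'={\rm gcd}(n_i,n)$, as desired.

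The only step requiring genuine input beyond formal homological algebra is the torsion-freeness of $H_1^Q(X,\Z)$, which is what makes the ${\rm Ext}$ contribution disappear; everything else is a direct consequence of the Universal Coefficient Theorem and basic properties of ${\rm Hom}$. One should also be attentive to the indexing convention in the theorem --- it is $H_1^Q$, not $H_2^Q$, that enters the ${\rm Ext}$ term of the sequence computing $H^2$ --- and to the fact that the hypothesis rules out a free summand in $H_2^Q(X,\Z)$, which is exactly what allows the answer to be expressed purely through the greatest common divisors.
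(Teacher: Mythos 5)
Your proposal is correct and follows essentially the same route as the paper: the universal coefficient theorem together with the torsion-freeness of $H_1^Q(X,\Z)$ (which the paper simply cites from \cite{CJKS2}) to kill the ${\rm Ext}$ term, followed by the standard facts that ${\rm Hom}$ commutes with finite direct sums and ${\rm Hom}(\Z_m,\Z_n)\cong\Z_{\gcd(m,n)}$. The only difference is that you supply the explicit verification that $H_1^Q(X,\Z)$ is free abelian on the orbits of the inner automorphism action, a detail the paper leaves to the reference.
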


\begin{proof}
It is known that $H^2_Q(X,A)$ is isomorphic to ${\rm Hom}(H_2^Q(X, \Z), A)$
by the universal coefficient theorem and from the fact that $H_1^Q(X,\Z)$ is torsion free \cite{CJKS2}. 
%The facts %Standard algebra %\cite{Hungerford} 
%gives 
The result follows from the standard facts
$${\rm Hom}(A_1\oplus  A_2 \oplus  \cdots \oplus  A_k, C) = {\rm Hom}(A_1,C) \oplus  {\rm Hom}(A_2,C) \oplus  \cdots \oplus  {\rm Hom}(A_k,C)$$
%Also by \cite{Hungerford} (Exercise 1(b), Chapter IV) we have
and %Since 
${\rm Hom}(\Z_n,\Z_m) \cong \Z_{\gcd(n,m)}$, for positive integers $n$ and $m$.
%it follows that 
%\begin{eqnarray*}
%\lefteqn{H^2_Q(X,\Z_n)}\\
%&\cong& {\rm Hom}(H_2^Q(X, \Z), \Z_n) \\
%&\cong &{\rm Hom} ( \Z_{n_1} ,\Z_n) \oplus  \cdots \oplus  {\rm Hom} ( \Z_{n_k} ,\Z_n) \\
%&\cong&\Z_{n_1} \oplus  \Z_{n_2} \oplus  \cdots \oplus  \Z_{n_j} \oplus  \cdots \oplus   \Z_{n_j}  %  \Z_{n_1} \oplus  \cdots \oplus \Z_{n_k}\\
%%&\cong & H_2^Q(X,\Z)
%\end{eqnarray*}
%as desired. 
\end{proof} 

The groups $H_2^Q(X, \Z)$ 
for 
some % all 
Rig quandles are found at \cite{rig}. Note that the groups given in \cite{rig} are rack homology
$H_2^R(X,\Z)$, and the relationship is given by $H_2^R(X,\Z) \cong H_2^Q(X, \Z) \oplus  \Z$ \cite{LN}.

The package Rig~\cite{rig} includes cohomology groups, $2$-cocycles, abelian
extensions and cocycle invariants for some  Rig quandles and some knots in the
KnotInfo table~\cite{KI}.  Multiplication tables of Rig quandles, (co)homology
groups, generating $2$-cocycles, and abelian extensions of Rig quandles that we
used for computations can be obtained online at the Wiki page of Rig:
%See \cite{Leandro2}.
%\url{http://code.google.com/p/rig/w/list}.
\url{http://github.com/vendramin/rig/wiki}.
% \cite{rig}}. %2}, and areavailable on-line.

\section{Distinguishing the unknot by quandle colorings}\label{sec-unknot}

We recall the following conjecture of \cite{CESY}.

\begin{conjecture}\label{conj-separate}
    If $K$ and $K'$ are any two knots such
    that $K' \ne K$ and $K' \ne rm(K)$ then there is a finite quandle $X$ such
    that $ \kc{X}{K} \ne \kc{X}{K'}$.
\end{conjecture}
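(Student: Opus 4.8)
The plan is to reduce Conjecture~\ref{conj-separate} to a profinite rigidity statement about fundamental quandles, and ultimately about knot groups. First I would use the dictionary between colorings and quandle homomorphisms: a coloring of a diagram of $K$ by $X$ is exactly a quandle homomorphism from the fundamental quandle $Q_K$ to $X$, so $\kc{X}{K}=|{\rm Hom}(Q_K,X)|$, and by the Joyce--Matveev theorem quoted above the hypotheses $K'\ne K$ and $K'\ne rm(K)$ are equivalent to $Q_K\not\cong Q_{K'}$. Hence it suffices to prove: if the fundamental quandles of two knots are non-isomorphic, then some finite quandle gives different coloring numbers.

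Second, I would translate the whole family $\{\kc{X}{K}\}_X$ into data intrinsic to $Q_K$. Every quandle homomorphism factors uniquely as a surjection onto a subquandle followed by an inclusion, so for each finite quandle $X$
\[
\kc{X}{K}\;=\;|{\rm Hom}(Q_K,X)|\;=\;\sum_{S\subseteq X}|\epi{Q_K}{S}|,
\]
the sum ranging over subquandles $S$ of $X$. Möbius inversion over the finite poset of subquandles of $X$ recovers every $|\epi{Q_K}{S}|$ from the numbers $\kc{S}{K}$ with $S\subseteq X$, and since $|\epi{Q_K}{S}|=|{\rm Aut}(S)|\cdot\#\{\theta:Q_K/\theta\cong S\}$ counts congruences on $Q_K$ with quotient $S$, the data $\{\kc{X}{K}\}_X$ is equivalent to the multiset of isomorphism types of finite quotients of $Q_K$, counted with multiplicity. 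So the conjecture is equivalent to the assertion that non-isomorphic knot quandles have distinct finite-quotient multisets.

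Third, I would pass to a profinite completion. Knot quandles are residually finite, since $Q_K$ embeds into the conjugation quandle of $\pi_1(\Sym^3\setminus K)$ as the conjugacy class of a meridian and knot groups are residually finite; thus $Q_K$ embeds into $\widehat{Q_K}=\varprojlim_\theta Q_K/\theta$, the inverse limit of its finite quotients. A quandle analogue of the classical group-theoretic fact — that the counts of epimorphisms onto finite objects determine the profinite completion — should hold by the same argument in the variety of quandles, giving $\widehat{Q_K}\cong\widehat{Q_{K'}}$ as topological quandles if and only if the finite-quotient multisets coincide. The conjecture then becomes a profinite rigidity statement: $Q_K\not\cong Q_{K'}$ must force $\widehat{Q_K}\not\cong\widehat{Q_{K'}}$. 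To prove this one would aim to recover from $\widehat{Q_K}$ the profinite completion of the knot group together with the conjugacy class of a meridian, hence the peripheral structure, and then invoke profinite rigidity results for knot complements (available for hyperbolic, torus and graph knots, and enough in general to recover, for instance, the Alexander polynomial).

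The genuine obstacle is this last step. Profinite rigidity of knot groups and knot complements is itself open beyond special families, and even granting it one still has to show that finite quandle colorings see enough of $\pi_1$ — concretely, that the image of a meridian in each finite quotient of the knot group is detected by some finite quandle coloring. Steps one through three I expect to be essentially routine profinite and universal-algebra bookkeeping; the topological and arithmetic input required to close step four is precisely what keeps Conjecture~\ref{conj-separate} open.
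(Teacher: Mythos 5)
The statement you were asked about is a conjecture: the paper does not prove it, and neither do you. Your text is a reduction strategy, not a proof, and you say so yourself in the last paragraph — the crucial step (profinite rigidity of knot groups/complements, plus recovering the peripheral structure and a meridian from the profinite data) is open, so nothing is established beyond a reformulation. What the paper actually proves is only the special case $K'=$ unknot (Proposition~\ref{prop-unknot}), by a much more modest argument: Papakyriakopoulos's longitude criterion plus residual finiteness of knot groups produces a finite group $G$ and a conjugacy-class quandle $X\subseteq G$ with a non-trivial coloring of $K$, hence $\kc{X}{K}>\kc{X}{\text{unknot}}$ after accounting for trivial colorings. Your framework is broader but buys no new cases.

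Two concrete technical points in your reduction also need repair before it could even serve as a clean reformulation. First, your residual-finiteness claim for $Q_K$ rests on "$Q_K$ embeds into the conjugation quandle of $\pi_1(\Sym^3\setminus K)$ as the conjugacy class of a meridian"; that natural map is not injective in general (it already fails for composite knots, where distinct quandle elements map to the same meridian conjugate), so residual finiteness of the knot quandle is not a formal consequence of residual finiteness of the knot group and would have to be argued separately. Second, the step "coloring numbers for all finite $X$ determine the multiset of finite quotients, hence the profinite completion" needs finite generation of $Q_K$ and a quandle analogue of the relevant counting lemma; this is plausible but should be proved, not asserted, since congruence lattices of quandles behave differently from normal-subgroup lattices of groups. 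As it stands, the proposal neither matches the paper (which leaves the statement as a conjecture and proves only the unknot case) nor closes the gap itself.
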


In this section, we prove  this conjecture when $K'$ is the unknot.
The idea is somewhat similar to that of Eisermann, see
\cite[Remark 59]{Eis3}.

\begin{proposition}\label{prop-unknot}
Let $K$ be a non-trivial knot. Then there exists a finite quandle
$X$ such that $K$ admits a non-trivial coloring with $X$.
\end{proposition}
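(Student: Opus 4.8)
The plan is to realize a non-trivial coloring of $K$ in a \emph{finite} quotient of the fundamental quandle $Q(K)$. First I would recall that colorings of $K$ by a quandle $X$ correspond to quandle homomorphisms $Q(K)\to X$, and that the trivial colorings correspond to the homomorphisms that factor through the trivial quandle on one element. So the statement is equivalent to: for a non-trivial knot $K$, there is a finite quandle $X$ and a homomorphism $Q(K)\to X$ whose image is not a single point. The natural candidate for the ``universal'' non-trivial target is a conjugation quandle coming from the knot group $\pi_1(S^3\setminus K)$: the meridians form a single conjugacy class, and the fundamental quandle $Q(K)$ maps onto this conjugation quandle with the map being injective on the peripheral data. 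Since $K$ is non-trivial, $\pi_1(S^3\setminus K)$ is non-abelian (this is where non-triviality of the knot enters, via Dehn's lemma / the loop theorem, or simply the fact that the knot group is abelian iff $K$ is the unknot), so two meridians $\mu_1,\mu_2$ can be chosen that do not commute; in particular they are distinct elements of the meridian conjugacy class, giving a non-trivial coloring of $K$ by that (typically infinite) conjugation quandle.

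The remaining, and main, task is to pass from this infinite conjugation quandle to a \emph{finite} one while keeping the coloring non-trivial. For this I would use residual finiteness of knot groups: $\pi_1(S^3\setminus K)$ is residually finite (it is the fundamental group of a Haken $3$-manifold, or one can invoke Hempel / Thurston), so there is a finite group $G$ and a surjection $q\colon \pi_1(S^3\setminus K)\to G$ with $q(\mu_1)\neq q(\mu_2)$ — in fact one can even arrange $q(\mu_1)$ and $q(\mu_2)$ to not commute, but for non-triviality of the coloring $q(\mu_1)\neq q(\mu_2)$ already suffices. Let $X$ be the conjugation quandle on the conjugacy class of $q(\mu_1)$ in $G$; this is a finite quandle. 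Composing $Q(K)\to (\text{meridian quandle})\to X$ gives a coloring of $K$ by $X$ in which two arcs receive the distinct colors $q(\mu_1)$ and $q(\mu_2)$, hence a non-trivial coloring.

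The step I expect to be the real obstacle — or at least the one requiring the most care — is ensuring that the separating homomorphism $q$ can be taken to separate two \emph{specific} meridians rather than merely separating some pair of elements; residual finiteness gives a finite quotient separating any prescribed pair of distinct group elements, so one only needs to first guarantee $\mu_1 \neq \mu_2$ \emph{in the group}, which follows because the meridian-to-meridian conjugator is non-central (equivalently, the peripheral subgroup is not normal, as $K$ is non-trivial). An alternative route that avoids explicitly invoking residual finiteness of $3$-manifold groups is to observe that the knot group surjects onto a non-abelian group already through a finite permutation representation on the cosets of a proper finite-index subgroup (e.g.\ the one coming from a finite coloring by a dihedral or Alexander quandle when $\Delta_K(-1)\neq\pm 1$), but this does not cover all knots, so the residual-finiteness argument is the clean uniform one. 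I would therefore present the proof in the two-move form above: (i) non-triviality of $K$ $\Rightarrow$ non-abelian knot group with two non-commuting meridians; (ii) residual finiteness $\Rightarrow$ a finite conjugation quandle in which those two meridians stay distinct, yielding the desired non-trivial coloring.
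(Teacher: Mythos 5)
Your proof is correct, and it shares the paper's overall skeleton---residual finiteness of the knot group plus a conjugation quandle on a conjugacy class in a finite quotient---but the step where non-triviality of $K$ enters is genuinely different. The paper invokes Papakyriakopoulos: $K$ non-trivial means the longitude $l_K\ne 1$, residual finiteness produces a finite quotient $f\colon\pi_1(\Sym^3\setminus K)\to G$ with $f(l_K)\ne 1$, and the resulting coloring is non-trivial because $l_K$ is a word of total exponent zero in the Wirtinger generators, so a monochromatic coloring would force $f(l_K)=1$. You instead use that the group of a non-trivial knot is non-abelian, pick two non-commuting (hence distinct) meridians, and use residual finiteness to separate them in a finite quotient; this avoids the longitude and its null-homology entirely, at the cost of a different (but likewise Dehn's-lemma-based) characterization of the unknot, and it replaces ``keep one element away from $1$'' by the equivalent ``separate two elements.'' Two small points to tighten: (i) for your conclusion that two \emph{arcs} receive the distinct colors $q(\mu_1)$, $q(\mu_2)$, the meridians $\mu_1,\mu_2$ should be chosen among the Wirtinger generators; this is possible because those generators generate the group and a group generated by pairwise commuting elements is abelian. (Alternatively, with arbitrary meridians: if every arc got the same color $g$, surjectivity of $q$ would make $G=\langle g\rangle$ abelian, so every meridian image would equal $g$, contradicting $q(\mu_1)\ne q(\mu_2)$.) (ii) The parenthetical justification via ``the meridian-to-meridian conjugator is non-central / the peripheral subgroup is not normal'' is both unnecessary and the least precise part of the argument; the generation argument in (i) is the clean reason two distinct meridians exist.
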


First we recall the facts we need for the proof, see for
example \cite{Eis3}.

\begin{enumerate}
    \item Papakyriakopoulos~\cite{papa} proved that a knot is trivial if and only
        its longitude is trivial in the fundamental group of the complement of the knot, called the knot group, $\pi_1(\Sym^3 \setminus K)$. % added
    \item The {\it Wirtinger presentation} of the knot group of an oriented knot $K$
        is defined as follows. Label the arcs $x_{1},x_{2},\dots, x_n$.
        At the end of the arc $x_{i-1}$ we undercross the arc $x_{k(i)}$
        and continue on arc $x_{i}$. Let $\epsilon(i)$ be the sign of the
        crossing as in Figure~\ref{coloredXing}. Then the knot group is
        \[
        \pi_{1}(\Sym^3 \setminus K)\simeq\langle x_{1},\dots,x_{n}\mid r_{1},\dots, r_{n}\rangle,
        \]
        where $r_{i}={x_{k(i)}}^{-\epsilon(i)}x_{i-1} {x_{k(i)}}^{\epsilon(i)}x_{i}^{-1}$
        for all $i$. % \in\{1,\dots,n\}$. 
%One can choose $m_{K}=x_{1}$ as the
%meridian and $l_{K}=\prod_{i=1}^{n}x_{i}^{-\epsilon(i)}x_{k(i)}^{\epsilon(i)}$
%as the longitude.
\item The map 
    $\partial\colon\pi_1(\Sym^3 \setminus K)\to\Z$ given by $\partial(x_i)=1$ for all $i$ is a group homomorphism. 
By \cite{BZ}, 
%[Burde, Gerhard; Zieschang, Heiner. Knots.
%Second edition. de Gruyter Studies in Mathematics, 5. Walter de
%Gruyter & Co., Berlin, 2003. xii+559 pp. ISBN: 3-11-017005-1 MR1959408
%(2003m:57005)], 
Remark 3.13, the longitude $l_K$ can be written as a
word $w$ on all the generators $x_1,\dots,x_n$ with $\partial(w)=0$.

\item Recall that a group $G$ is \emph{residually finite} if every non-trivial
$g\in G$ is mapped non-trivially into some finite quotient of $G$.
As a consequence of \cite{Thurston}
one obtains that
every knot group is residually finite, see \cite{Hempel} for a proof.
\end{enumerate}

\begin{proof}[Proof of Proposition~\ref{prop-unknot}]
    Since $K$ is non-trivial, $l_{K}\ne1$. Since knot groups are residually
    finite, there exists a finite group $G$ and a surjective group homomorphism
    $f\colon\pi_{1}(\Sym^3 \setminus K)\to G$ such that $f(l_{K})\ne1$. Then $f$ maps
    the conjugacy class of $x_{1}$ into a non-trivial conjugacy class
    $X$ of $G$. From this it follows that the knot $K$ admits a non-trivial
    coloring with the conjugation quandle $X$.
\end{proof}

%Some comments that might be relevant.
%
%1) In a comment to
%
%http://mathoverflow.net/questions/124761/one-question-about-the-quandle
%
%Daniel Moskovich writes "A stronger question would be whether knot
%quandles are residually finite (distinguished by homomorphisms onto
%finite quandles)." Our result is not exactly what Daniel asked but it
%related.
%
%2) In http://www.csc.liv.ac.uk/~alexei/Unknot you find a paper where
%it is asked if finite quandles can detect trivial knots.

\section{Quandle colorings of composite knots}\label{sec-connectsum}

%Let $m: \Sym^3 \rightarrow \Sym^3$ be an orientation reversing
%homeomorphism.  For a knot $K$, $m(K)$ is the mirror image of $K$.
%Let $r(K)$ denote the same knot $K$ with its orientation reversed.
%We regard $m$ and $r$ as maps on equivalence classes of knots.  We
%consider the group $\mathcal{G} = \{ 1,r,m,rm \} $ acting on the set of all oriented knots.
% For each knot $K$ let $\mathcal{G}(K) =\{ K,r(K),m(K),rm(K)
%\}$ be the orbit of $K$ under the action of $\mathcal{G}$.

In this section we introduce the  concept of {\it end monochromatic}, and show that 
if a knot $K_1$ or % and 
  a knot   $  K_2$ is  end monochromatic with a finite 
    homogeneous % connected 
    quandle $X$, then 
    $ |X|  \kc{X}{K_1\#K_2}=\kc{X}{K_1} \kc{X}{K_2}$.
%here

A  $1$-tangle is a properly embedded arc in a $3$-ball, and the equivalence of
$1$-tangles is defined by ambient  isotopies of the $3$-ball fixing the
boundary (cf.~\cite{Conway}).  A diagram of a $1$-tangle  is defined in a
manner similar to a knot diagram, from a regular projection to a disk by
specifying  crossing information, see Figure~\ref{tangles}(A).  An orientation
of a $1$-tangle is specified by an arrow on a diagram as depicted.  A knot
diagram is obtained from a $1$-tangle diagram by closing the end points by a
trivial arc outside of a disk. This procedure is called the {\it closure} of a
$1$-tangle.  If a $1$-tangle is oriented, then the closure inherits the
orientation.

\begin{figure}[htb]
    \begin{center}
   \includegraphics[width=2.7in]{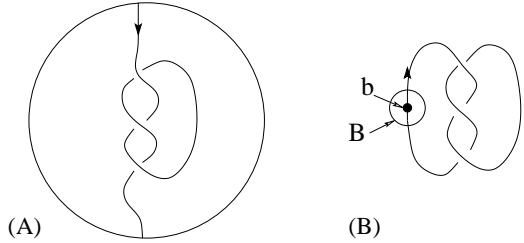}\\
    \caption{ $1$-tangles }\label{tangles}
    \end{center}
\end{figure}

A $1$-tangle is obtained from a knot $K$ as follows.  Choose   a base point $b
\in K$  and a small open neighborhood $B$ of $b$ in the $3$-sphere $\Sym^3$
such that  $(B, K\cap B)$ is a trivial ball-arc pair (so that $K \cap B$ is
unknotted in $B$, see Figure~\ref{tangles}(B)).  Then  $(\Sym^3 \setminus {\rm
Int}(B), K \cap (\Sym^3 \setminus {\rm Int}(B)))$ is a $1$-tangle called the
$1$-tangle associated with $K$.  The resulting $1$-tangle does not depend on
the choice of a base point.  
%This process can be performed on diagrams as well.
%From a given  knot diagram, take a base point $b$ and its small disk
%neighborhood $B$ as in Figure~\ref{tangles}(B).  Then regarding the knot
%diagram to lie in the $2$-sphere and making it inside-out about $B$, we obtain
%a $1$-tangle diagram. In Figure~\ref{tangles}, this process yields (A) from
%(B). 
 If  a knot is oriented, then the corresponding $1$-tangle inherits the
orientation. 

A quandle coloring of an oriented $1$-tangle diagram is defined in a  manner
similar to  those for knots.  We do not require that the end points receive the
same color for a quandle coloring of $1$-tangle diagrams.

\begin{definition}
	{\rm 
	Let  $K$ be a $1$-tangle diagram and $X$ be a quandle.
	We say that $(K, X)$ is \emph{end monochromatic}, 
	or $K$ is \emph{end monochromatic} with $X$, 
	if any coloring of $K$ by $X$ assigns the same color on the two end points.

%	Let $K$ be a knot diagram with a base point $b$. Then 
%	we say that $(K, b,X)$ is \emph{end monochromatic}, or 
%	$(K, b)$ is \emph{end monochromatic} with $X$, if a corresponding $1$-tangle diagram is end monochromatic.

	}
\end{definition}

%\begin{figure}[htb]
%    \begin{center}
%   \includegraphics[width=4in]{}\\
%    \caption{ A base point passing a crossing }\label{pass}
%    \end{center}
%\end{figure}

% 
%When $(K,X)$ is not end monochromatic, a knot invariant called the {\it  colored quandle longitude}
%was defined in \cite{N}. In particular, the following lemma follows.

Two  diagrams of the same $1$-tangle are related by Reidemeister moves.
The one-to-one correspondence of colorings under each Reidemeister move 
does not change the colors of the end points. Thus we have the following.

\begin{lemma}
	The property of being end monochromatic 
	 for a $1$-tangle corresponding to a knot  $K$ 
	and a base point $b$ does not depend on the choice of  the base point $b$.
	%For two diagrams of a knot, one is end monochromatic if and only so is the other.
\end{lemma}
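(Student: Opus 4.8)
The plan is to reduce the statement to two facts already in hand: (i) the $1$-tangle associated with a knot $K$ is, up to equivalence of oriented $1$-tangles, independent of the chosen base point (this is recorded in the paragraph that defines the $1$-tangle associated with $K$); and (ii) the one-to-one correspondence of colorings induced by a Reidemeister move on $1$-tangle diagrams does not change the colors assigned to the two endpoint arcs (the observation stated immediately before the lemma). Granting these, the argument is essentially bookkeeping.

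Concretely, I would fix a finite quandle $X$ and two base points $b, b' \in K$, with corresponding $3$-balls $B, B'$ and associated $1$-tangles $T_b = (\Sym^3 \setminus {\rm Int}(B),\, K \cap (\Sym^3 \setminus {\rm Int}(B)))$ and $T_{b'}$ defined in the same way. By (i), $T_b$ and $T_{b'}$ are equivalent oriented $1$-tangles; choose diagrams $D$ of $T_b$ and $D'$ of $T_{b'}$. Since $D$ and $D'$ are diagrams of the same $1$-tangle, they are related by a finite sequence of Reidemeister moves (together with planar isotopies of the projection disk fixing the endpoints). Composing the corresponding bijections of colorings along this sequence and applying (ii) at each step, I obtain a bijection $\skc{X}{D} \to \skc{X}{D'}$ carrying each coloring to one that assigns the same unordered pair of colors to the two endpoint arcs. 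Hence $D$ admits a coloring with two distinct endpoint colors if and only if $D'$ does; equivalently, $D$ is end monochromatic with $X$ if and only if $D'$ is. The same reasoning (applied with $b = b'$) shows that being end monochromatic is a property of $T_b$ rather than of the chosen diagram, so this establishes the claim.

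I do not expect a genuine obstacle here; the only point deserving a word of care is that the equivalence between $T_b$ and $T_{b'}$ might a priori be witnessed by an ambient isotopy of $\Sym^3$ dragging $b$ to $b'$ along $K$, which need not fix the endpoints of the arc. But such an isotopy carries $B$ to $B'$ and restricts to an equivalence of the $1$-tangle pairs, and since the condition ``the two endpoints receive the same color'' is symmetric in the two endpoints, it is immaterial whether this equivalence fixes them or interchanges them. Thus all the substance of the lemma is contained in facts (i) and (ii), which are already available.
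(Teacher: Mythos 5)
Your proposal is correct and follows essentially the same route as the paper: the paper's justification is exactly the combination of the base-point independence of the associated $1$-tangle and the observation, stated just before the lemma, that the coloring bijections induced by Reidemeister moves do not change the endpoint colors. Your extra remark that a possible interchange of the two endpoints is harmless (since the end monochromatic condition is symmetric) is a reasonable point of care but does not change the argument.
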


Thus, if a diagram of a $1$-tangle corresponding to 
a knot $K$ and some base point $b$  is end monochromatic with $X$, 
then  we say that a knot $K$ is \emph{end monochromatic} with $X$.

%\begin{proof}
%A base point $b$  can be  moved along a knot diagram to any other  base point,
%so that it is sufficient to check the condition when a base point passes under
%a colored arc.  Let $b_0, b_1$ be points nearby $b$ and regard them as the
%endpoints of the corresponding $1$-tangle, see Figure~\ref{pass}.  If there is
%a coloring ${\cal C}$ by a quandle $X$ with distinct colors $x_0, x_1$ assigned
%to $b_0, b_1$, respectively, then after $b$ going under an arc colored by $y$
%in the positive direction to a base point $b'$, then the new diagram has a
%corresponding coloring ${\cal C}'$ such that the corresponding nearby points
%$b_0', b_1'$ receive colors $x_0 *y, x_1*y$, respectively. By the second axiom
%of the quandle, $x_0 = x_1$ if and only if $x_0 *y = x_1*y$.  Hence the result
%follows. The same argument can be applied to move $b$ in the other direction. 
% 
% 
%The second statement follows from the fact that the set of colorings of a knot
%diagram by a quandle is in one-to-one correspondence under each Reidemeister
%move (e.g.~\cite{CKS}). 
%\end{proof}

\begin{lemma}\label{lem-basecolor}
\begin{sloppypar}
    Let $X$ be a finite 
    homogeneous % connected 
    quandle, $x \in X$, 
    and $\kc{ (X, x) }{K,  b}$ 
    be the number of colorings of a diagram $K$ by $X$ such that 
    the arc that contains the base point $b$ receives the color $x$. 
    Then 
    \[
    \kc{ (X, x) }{K,  b} = \kc{X}{K} / |X|
    \]
    for any $x \in X$. 
    \end{sloppypar}
\end{lemma}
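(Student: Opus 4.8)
The plan is to use the fact that $X$ is homogeneous to move any coloring with base color $x$ to any other base color $x'$ via an automorphism, thereby establishing a bijection between the fibers $\skc{(X,x)}{K,b}$ and $\skc{(X,x')}{K,b}$ of the evaluation-at-$b$ map. First I would fix a diagram $K$ of the $1$-tangle and consider the map $\operatorname{ev}_b\colon \skc{X}{K}\to X$ sending a coloring $\mathcal C$ to $\mathcal C(a_b)$, where $a_b$ is the arc containing the base point $b$. By definition $\kc{(X,x)}{K,b}=|\operatorname{ev}_b^{-1}(x)|$, so the total count decomposes as $\kc{X}{K}=\sum_{x\in X}\kc{(X,x)}{K,b}$, and it suffices to show all the summands are equal; since there are $|X|$ of them, each equals $\kc{X}{K}/|X|$.

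To show the fibers all have the same size, I would argue that for $x,x'\in X$ there is a bijection $\operatorname{ev}_b^{-1}(x)\to\operatorname{ev}_b^{-1}(x')$. Pick $g\in\operatorname{Aut}(X)$ with $g(x)=x'$, which exists by homogeneity. Post-composition with $g$ takes colorings to colorings: if $\mathcal C$ satisfies the crossing relation $\mathcal C(\text{over-out})=\mathcal C(\text{under})*\mathcal C(\text{over-in})$ at each crossing, then $g\circ\mathcal C$ satisfies the same relation because $g$ is a quandle homomorphism, $g(a*b)=g(a)*g(b)$. Moreover $(g\circ\mathcal C)(a_b)=g(\mathcal C(a_b))=g(x)=x'$, so $g\circ\mathcal C\in\operatorname{ev}_b^{-1}(x')$. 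The inverse map is post-composition with $g^{-1}$, so this is a bijection. Hence $|\operatorname{ev}_b^{-1}(x)|=|\operatorname{ev}_b^{-1}(x')|$ for all $x,x'$, which completes the argument.

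The one point that needs a little care is why $\operatorname{ev}_b$ is well defined on the level of colorings of a fixed diagram — but this is immediate, since the arc containing $b$ is a specific arc of the chosen diagram, and a coloring is by definition a function on the arc set, so evaluating it at that arc makes sense. (The independence of the end-monochromatic property, and more generally of the coloring count, from the choice of diagram and base point is already recorded in the preceding lemmas, so no subtlety arises there.) I expect no real obstacle here; the main content is simply the observation that homogeneity gives enough automorphisms to make the evaluation map's fibers a single orbit-sized family, and the rest is the elementary counting identity $\kc{X}{K}=\sum_{x}\kc{(X,x)}{K,b}$ together with equality of the $|X|$ summands.
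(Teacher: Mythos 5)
Your proof is correct and is essentially the paper's own argument: homogeneity supplies an automorphism $g$ with $g(x)=x'$, post-composition $\mathcal{C}\mapsto g\circ\mathcal{C}$ gives a bijection between the fibers of the evaluation-at-$b$ map, and summing the $|X|$ equal fiber sizes yields $\kc{X}{K}=|X|\,\kc{(X,x)}{K,b}$. No gaps; the only difference is cosmetic (your explicit $\operatorname{ev}_b$ notation versus the paper's direct description of the sets $\skc{(X,x)}{K,b}$).
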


\begin{proof}
    First we show that $\kc{ (X, x) }{K,  b} =\kc{ (X, y) }{K,  b} $ for any $x, y \in X$.
    Let  $\skc{ (X, x) }{K,  b}$  be the set of colorings ${\cal C} $ such that  ${\cal C} (\alpha)=x$,
    where $\alpha$ is the arc that contains $b$. 
   % Since $X$ is connected, 
  Since $X$ is homogeneous, 
   there is an automorphism $h$ of $X$ such that 
    $h(x)=y$. For any coloring ${\cal C} \in \skc{ (X, x) }{K,  b}$, 
    $h_\# ({\cal C} ) =   h \circ {\cal C}$ satisfies $h_\# ({\cal C} ) (\alpha)=y$, hence $h$ induces
    a bijective map
    $h_\# : \skc{ (X, x) }{K,  b} \rightarrow \skc{ (X, y) }{K,  b} $. 
    Then we have 
    $$\kc{X}{K} = \sum_{y \in X} \kc{ (X, y) }{K,  b} = |X| \kc{(X, x)}{K, b}$$
    for any $x \in X$. 
\end{proof}

\begin{figure}[htb]
    \begin{center}
   \includegraphics[width=1.1in]{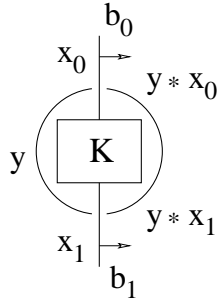}\\
    \caption{ End monochromatic tangle }\label{endmono}
    \end{center}
\end{figure}

The following  lemma was stated and proved in \cite{Jozef} for the $3$ element
dihedral quandle $Q(3,1)$ (and dihedral quandles in \cite{Jozef2004}) %
and generalized by Nosaka~\cite{Nos}.  The idea of
proof is illustrated by Figure~\ref{endmono}, which was taken from
\cite{Jozef}.
 
\begin{lemma}[\cite{Nos}]\label{lem-nosaka1}
    If a quandle $X$ is faithful, then for any knot $K$, $(K, X)$ is end
    monochromatic.
\end{lemma}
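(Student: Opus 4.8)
The plan is to reduce to colorings by a conjugation quandle and then invoke the peripheral structure of the knot group, using faithfulness only at the very end. In any quandle the right translations satisfy ${\cal R}_{a*b}={\cal R}_b{\cal R}_a{\cal R}_b^{-1}$, so $a\mapsto{\cal R}_a^{-1}$ is a quandle homomorphism $\psi\colon X\to{\rm Conj}(G)$, where $G={\rm Inn}(X)$ and ${\rm Conj}(G)$ is the conjugation quandle $g*h=h^{-1}gh$; by definition $\psi$ is injective exactly when $X$ is faithful. Fix a diagram $D$ of the $1$-tangle $T$ associated with $K$, with endpoint arcs $\alpha_0,\alpha_1$. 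Given a coloring ${\cal C}$ of $D$ by $X$, the composite $\psi\circ{\cal C}$ is a coloring of $D$ by ${\rm Conj}(G)$, so by injectivity of $\psi$ it suffices to show $\psi({\cal C}(\alpha_0))=\psi({\cal C}(\alpha_1))$ in $G$. Now a coloring of $D$ by ${\rm Conj}(G)$ is nothing but a homomorphism $h$ to $G$ from the group $\langle\text{arcs of }D\mid\text{crossing relations of }D\rangle$, sending the meridian of each arc to the group element it is colored by, because the coloring condition at each crossing is precisely a Wirtinger relation.

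It therefore remains to prove that the meridians of $\alpha_0$ and of $\alpha_1$ are equal in that group. This is a topological fact: the exterior of the $1$-tangle associated with $K$ is homeomorphic to the exterior of $K$ (closing up $T$ merely glues back a trivial ball-arc pair), so $\langle\text{arcs of }D\mid\text{crossings of }D\rangle\cong\pi_1(\Sym^3\setminus K)$, the knot group. Under this isomorphism both $\alpha_0$ and $\alpha_1$ --- the two sub-arcs into which the base point $b$ cuts the arc of a diagram of $K$ through $b$, joined by the crossingless closure arc --- are carried to the meridian of that very arc of $K$; hence their meridians already coincide in $\langle\text{arcs of }D\mid\text{crossings of }D\rangle$. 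Applying $h=\psi\circ{\cal C}$ gives $\psi({\cal C}(\alpha_0))=\psi({\cal C}(\alpha_1))$, and injectivity of $\psi$ --- that is, faithfulness of $X$ --- yields ${\cal C}(\alpha_0)={\cal C}(\alpha_1)$. Since ${\cal C}$ was arbitrary, $(K,X)$ is end monochromatic.

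I expect the only real obstacle to be making precise the identification in the second paragraph, namely that the Wirtinger relations coming from the tangle diagram $D$ already force the two end meridians to be equal --- equivalently, that adjoining the closure arc (which would identify $\alpha_0$ with $\alpha_1$) is redundant. The homeomorphism between the tangle exterior and the knot exterior settles this. An essentially equivalent, more combinatorial route is to traverse $D$ from $\alpha_0$ to $\alpha_1$, recording the overpasses: this exhibits the meridian of $\alpha_1$ as a conjugate of that of $\alpha_0$ by (a word representing) the longitude of $K$, whereupon the equality of the two meridians follows from the standard fact that a meridian commutes with its longitude in the knot group. Either way, everything before the last sentence is independent of faithfulness; the hypothesis is used solely to pass from ${\cal R}_{{\cal C}(\alpha_0)}={\cal R}_{{\cal C}(\alpha_1)}$ to ${\cal C}(\alpha_0)={\cal C}(\alpha_1)$.
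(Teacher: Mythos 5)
Your proof is correct and is essentially the argument the paper relies on: the paper gives no proof of this lemma, only the citation to Nosaka \cite{Nos} (with Przytycki's figure as "the idea of proof"), and that idea is exactly your reduction to the conjugation quandle of ${\rm Inn}(X)$ --- i.e.\ the group representation of the knot group determined by the coloring --- together with the coincidence of the two end meridians in the tangle (equivalently knot) exterior, with faithfulness used only at the last step. Just keep explicit the supporting fact that the arcs/crossings presentation of the $1$-tangle diagram presents the tangle exterior group (so the two end generators are already equal there); you flag this correctly and settle it via the exterior homeomorphism, and it is also what keeps your alternative "longitude commutes with meridian" route from being circular.
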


\begin{remark}\label{rem-endmonoex}
{\rm
    There are many examples of knots $K$ and quandles $X$ where $X$ is not
    faithful, but $(K, X)$ is end monochromatic.  For example, $Q(8,1)$, which is
    an abelian extension of $Q(4,1)$, is not faithful, but $5_1$ and $8_5$ are end
    monochromatic with $Q(8,1)$, where $5_1$ has only trivial colorings, and $8_5$
    has non-trivial colorings with $Q(8,1)$.  The smallest non-faithful quandle for
    which $3_1$ is end monochromatic is $Q(12, 1)$, which is an abelian extension
    of $Q(6,1)$. 
}
\end{remark}

In the following lemma, a formula is given for the number of colorings of
composite knots.  For a composite knot $K_1 \# K_2$, we assume that $K_1$ and
$K_2$ are oriented, and the composite $K_1 \# K_2$ is defined in such a way
that an orientation of the composite  restricts to the orientation of each
factor, and such an orientation is specified for the composite to make it an
oriented knot, see Figure~\ref{connectsum}.

\begin{lemma}[cf.~\cite{Nos,Jozef}]  \label{lem-nosaka2}
    If a knot $K_1$ or % and 
a knot    $  K_2$ is % are  
    end monochromatic  with a finite 
    homogeneous % connected 
    quandle $X$, then 
    $$ |X|  \kc{X}{K_1\#K_2}=\kc{X}{K_1} \kc{X}{K_2}. $$
\end{lemma}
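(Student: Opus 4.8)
The plan is to realize $K_1\#K_2$ through the $1$-tangles associated with the two factors and then to identify the colorings of the connected sum with compatible pairs of colorings of those tangles. First I would fix base points $b_1\in K_1$, $b_2\in K_2$, form the associated $1$-tangles $T_1,T_2$, and note that $K_1\#K_2$ is obtained by joining the outgoing endpoint of $T_1$ to the incoming endpoint of $T_2$, and the outgoing endpoint of $T_2$ to the incoming endpoint of $T_1$, by trivial arcs carrying no crossings, compatibly with the chosen orientations. Since these joining arcs have no crossings, restriction of a coloring of a connected-sum diagram to the two tangle sub-diagrams gives a bijection
\[
\skc{X}{K_1\#K_2}\;\longleftrightarrow\;\{(\mathcal{C}_1,\mathcal{C}_2):\mathcal{C}_i\in\skc{X}{T_i},\ \mathrm{out}(\mathcal{C}_1)=\mathrm{in}(\mathcal{C}_2),\ \mathrm{out}(\mathcal{C}_2)=\mathrm{in}(\mathcal{C}_1)\},
\]
where $\mathrm{in}(\mathcal{C}_i)$ and $\mathrm{out}(\mathcal{C}_i)$ are the colors on the arcs containing the incoming and outgoing endpoints of $T_i$. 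The same triviality of the closing arc shows that $\skc{X}{K_i}$ is in bijection with the set of colorings of $T_i$ whose two endpoints receive equal colors, and under that bijection the common endpoint color equals the color of the arc through $b_i$.

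Next I would invoke the hypothesis, say on $K_1$ (the other case being symmetric after relabeling). End monochromaticity of $(K_1,X)$ means every coloring of $T_1$ already satisfies $\mathrm{in}(\mathcal{C}_1)=\mathrm{out}(\mathcal{C}_1)$, so $\skc{X}{T_1}=\skc{X}{K_1}$ and the two gluing conditions collapse to the single requirement that $\mathrm{in}(\mathcal{C}_2)=\mathrm{out}(\mathcal{C}_2)$ be equal to the color $x:=\mathrm{in}(\mathcal{C}_1)$ of the arc through $b_1$; in particular $\mathcal{C}_2$ is then (the restriction of) a coloring of $K_2$ with the arc through $b_2$ colored $x$. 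Grouping the compatible pairs by the value $x\in X$ gives
\[
\kc{X}{K_1\#K_2}=\sum_{x\in X}\kc{(X,x)}{K_1,b_1}\,\kc{(X,x)}{K_2,b_2}.
\]
Since $X$ is finite and homogeneous, Lemma~\ref{lem-basecolor} evaluates each factor as $\kc{X}{K_i}/|X|$, so the sum consists of $|X|$ equal terms and equals $\kc{X}{K_1}\kc{X}{K_2}/|X|$; multiplying by $|X|$ yields the claimed formula.

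I expect the only genuine work to be the bookkeeping in the first step: making precise how a coloring of a connected-sum diagram decomposes into a pair of colorings of the two tangle diagrams, verifying that the trivial joining and closing arcs impose exactly the stated equalities on endpoint colors, and checking that the orientation conventions make "incoming" and "outgoing" arcs well defined so that the bijection above is correct. Once this dictionary is established, the remainder is the short counting identity together with one application of Lemma~\ref{lem-basecolor}.
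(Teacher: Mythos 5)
Your proposal is correct and follows essentially the same route as the paper: decompose a coloring of $K_1\#K_2$ into colorings of the two $1$-tangles, use end monochromaticity of one factor to force all four endpoint colors to agree with the base-point color $x$, group the compatible pairs by $x$, and apply Lemma~\ref{lem-basecolor} (where homogeneity enters) to evaluate each factor as $\kc{X}{K_i}/|X|$. The extra in/out bookkeeping you flag is just a more explicit version of the bijection $\bigcup_{x\in X}\bigl[\skc{(X,x)}{K_1,b_1}\times\skc{(X,x)}{K_2,b_2}\bigr]\to\skc{X}{K_1\#K_2}$ used in the paper's proof.
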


\begin{proof}
    Let $b_1, b_2$ be base points on diagrams of $K_1$ and $K_2$, respectively,
    with respect to which $1$-tangles and connected sum are formed.  Let $x \in
    X$. Let  $\skc{ (X, x) }{K_i, b_i }$, and $\kc{ (X, x) }{K_i, b_i }$,
    $i=1,2$,   be the set and the number of colorings of $K_i$ by $X$ such that
    the arc that contains $b_i$ receives the color $x$.  Let $c_1, c_2$ be
    points on a diagram $K=K_1 \# K_2$ that result from taking a connected sum
    with respect to $b_1$ and $b_2$ by connecting $1$-tangles, see
    Figure~\ref{connectsum}.

    \begin{figure}[htb]
        \begin{center}
            \includegraphics[width=3.5in]{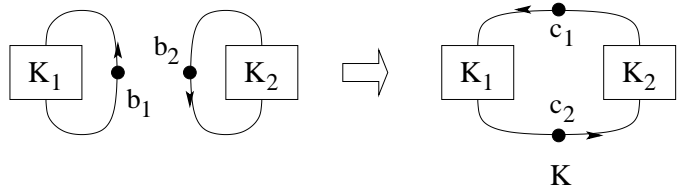}\\
            \caption{ Taking connected sum }\label{connectsum}
        \end{center}
    \end{figure}

    For  colorings ${\cal C}_i \in \kc{ (X, x) }{K_i, b_i }$, $i=1,2$, a coloring
    ${\cal C}= {\cal C}_1 \# {\cal C}_2$ of $K$ is uniquely determined such that
    the colors of the arcs containing $c_i$, $i=1,2$, coincide and is $x$.
%    Conversely, any coloring ${\cal C}$ of $K$ has the property that the color of
%    the arcs containing $c_i$, $i=1,2$, coincide, since $K_i$ are end monochromatic
%    with $X$.  
Conversely, any coloring ${\cal C}$ of $K$ has the property
    that the color of the arcs containing $c_i$, $i = 1, 2$, coincide, since ${\cal C}$ will also be
    a coloring of the tangles $K_1$ and $K_2$. If, say, $K_1$ is monochromatic with $X$
     then the colors of $c_1$ and $c_2$  must be the same.
Hence there is a bijection $$\bigcup_{x \in X} [\  \skc{ (X, x)
    }{K_1, b_1 } \times \skc{ (X, x) }{K_2, b_2 } \ ]  \rightarrow \skc{ X }{ K }.
    $$ By Lemma~\ref{lem-basecolor}, we have $\kc{(X, x)}{K_i, b_i} = \kc{X}{K_i} /
    |X|$ for any $x \in X$, hence the left side above has the cardinality $$|X| (
    \kc{X}{K_1} / |X| ) (  \kc{X}{K_2} / |X| ) , $$ as desired.
\end{proof}

Lemmas~\ref{lem-nosaka1} and \ref{lem-nosaka2} imply the following.

\begin{lemma}[\cite{Nos}]\label{lem-combined}
    If  $X$ is a finite faithful 
    quandle, then 
    \[
    |X|  \kc{X}{K_1\#K_2}=\kc{X}{K_1} \kc{X}{K_2}
    \]
	for knots $K_1$ and $K_2$.
\end{lemma}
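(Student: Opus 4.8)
The plan is to read off Lemma~\ref{lem-combined} as an immediate corollary of the two lemmas just established, so the proof should amount to a short implication chain rather than any new argument. First I would apply Lemma~\ref{lem-nosaka1}: because $X$ is faithful, for \emph{every} knot --- and in particular for $K_1$ --- the pair $(K_1,X)$ is end monochromatic (it is irrelevant whether we single out $K_1$ or $K_2$ here). That is precisely the hypothesis required by Lemma~\ref{lem-nosaka2}. Then I would quote Lemma~\ref{lem-nosaka2}, with $K_1$ end monochromatic, to get
\[
|X|\,\kc{X}{K_1\# K_2}=\kc{X}{K_1}\kc{X}{K_2},
\]
which is the assertion. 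All of the combinatorics --- the bijection between $\skc{X}{K_1\# K_2}$ and $\bigcup_{x\in X}[\,\skc{(X,x)}{K_1,b_1}\times\skc{(X,x)}{K_2,b_2}\,]$ together with the counting via Lemma~\ref{lem-basecolor} --- was already carried out in the proof of Lemma~\ref{lem-nosaka2}, so nothing needs to be redone.

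The one place I expect friction, and the step I would flag as the real obstacle, is matching hypotheses: Lemma~\ref{lem-nosaka2} is stated for a finite \emph{homogeneous} quandle, while here we are only handed that $X$ is finite and faithful. So to invoke Lemma~\ref{lem-nosaka2} one must also know that $X$ is homogeneous. For the quandles that matter in this paper this is free: the Rig quandles are connected, and a connected quandle is automatically homogeneous because ${\rm Inn}(X)\le{\rm Aut}(X)$ already acts transitively on $X$. In full generality one should therefore either add ``homogeneous'' to the hypotheses, or prove directly for a finite faithful $X$ that the base-point-restricted count $\kc{(X,x)}{K,b}$ does not depend on $x$ --- that $x$-independence (Lemma~\ref{lem-basecolor}) is the property Lemma~\ref{lem-nosaka2} actually consumes, and it is homogeneity rather than faithfulness that supplies it.

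So concretely my proposal is: (i) by Lemma~\ref{lem-nosaka1}, faithfulness of $X$ makes $(K_1,X)$ end monochromatic; (ii) note that $X$ is homogeneous (automatic in the connected case of interest, or to be carried as a hypothesis); (iii) apply Lemma~\ref{lem-nosaka2} to conclude the displayed identity. The only genuine content is point (ii), i.e. making sure the homogeneity assumption behind Lemma~\ref{lem-nosaka2} is in force; everything else is a direct citation of the two preceding lemmas.
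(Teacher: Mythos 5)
Your proposal matches the paper exactly: the paper gives no separate argument, stating only that Lemmas~\ref{lem-nosaka1} and \ref{lem-nosaka2} imply Lemma~\ref{lem-combined}, which is precisely your implication chain. Your caveat about the \emph{homogeneous} hypothesis in Lemma~\ref{lem-nosaka2} is well taken --- the paper passes over it silently, and it is indeed supplied by connectedness for the Rig quandles actually used (so it should either be carried as an implicit standing assumption or added to the statement, exactly as you say).
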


\begin{corollary} \label{cor-rm}
	If $X$ is a finite faithful 
	quandle and $R$, $K$ are knots, then 
	\[
	\kc{X}{R \#K}=\kc{X}{R \# rm(K)}.
	\]
	In particular, if $X$ is a finite faithful quandle and $K$ is reversible or
	positive-amphicheiral, respectively, then either $\kc{X}{R \#K}=\kc{X}{R \#
	m(K)}$ or $\kc{X}{R \#K}=\kc{X}{R \# r(K)}$.
\end{corollary}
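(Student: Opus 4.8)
The plan is to reduce everything to the well-known fact that quandle colorings cannot distinguish a knot from its reversed mirror image, and then to feed this into the connected-sum multiplicativity formula of Lemma~\ref{lem-combined}. Since $X$ is finite and faithful, Lemma~\ref{lem-combined} applies and gives
$$|X|\,\kc{X}{R\#K}=\kc{X}{R}\,\kc{X}{K}\quad\text{and}\quad|X|\,\kc{X}{R\#rm(K)}=\kc{X}{R}\,\kc{X}{rm(K)}.$$
So it suffices to prove the sub-claim that $\kc{X}{K}=\kc{X}{rm(K)}$ for every knot $K$ and every finite quandle $X$; note that faithfulness is not needed for this sub-claim, only for invoking Lemma~\ref{lem-combined}.

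For the sub-claim I would use the correspondence, recalled in Section~\ref{sec-prelim}, between colorings of a diagram of $K$ by $X$ and quandle homomorphisms from the fundamental quandle of $K$ to $X$. By the theorem of Joyce and Matveev cited in the Preliminaries, the fundamental quandles of $K$ and $rm(K)$ are isomorphic; hence the sets of quandle homomorphisms into $X$ are in bijection, and in particular $\kc{X}{K}=\kc{X}{rm(K)}$. Substituting this into the two displayed identities and cancelling $|X|$ yields $\kc{X}{R\#K}=\kc{X}{R\#rm(K)}$, which is the first assertion of the corollary.

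For the ``in particular'' statement I would use that $\mathcal{G}=\{1,r,m,rm\}\cong\Z_2\times\Z_2$, so $r$ and $m$ commute as operations on knot types and $m^2=r^2=1$. If $K$ is reversible, then $r(K)=K$, so $rm(K)=m(r(K))=m(K)$, and the first assertion gives $\kc{X}{R\#K}=\kc{X}{R\#m(K)}$. If $K$ is positive-amphicheiral, then $m(K)=K$, so $rm(K)=r(m(K))=r(K)$, and the first assertion gives $\kc{X}{R\#K}=\kc{X}{R\#r(K)}$.

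I do not expect any genuine obstacle: the only external input is the $rm$-invariance of the fundamental quandle, and the only point needing care is that faithfulness is precisely what licenses the application of Lemma~\ref{lem-combined} (through Lemmas~\ref{lem-nosaka1} and \ref{lem-nosaka2}); one should resist the temptation to use connected-sum multiplicativity for a non-faithful $X$ without an additional end-monochromaticity hypothesis.
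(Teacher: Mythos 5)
Your proposal is correct and follows essentially the same route as the paper: the paper's proof also applies Lemma~\ref{lem-combined} to both $R\#K$ and $R\#rm(K)$ and uses $\kc{X}{K}=\kc{X}{rm(K)}$ (the $rm$-invariance of coloring numbers coming from the Joyce--Matveev isomorphism of fundamental quandles) to conclude. You merely spell out the sub-claim and the ``in particular'' step, which the paper leaves implicit.
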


\begin{proof}
    By Lemma~\ref{lem-combined}, 
    \begin{eqnarray*}
        \lefteqn{ \kc{X}{R \#K}=\kc{X}{R} \kc{X}{K}/ |X| }\\
        & & = \kc{X}{R} \kc{X}{rm(K)}/ |X| =
        \kc{X}{R \# rm(K)}. 
    \end{eqnarray*}
    This completes the proof.
\end{proof}

According to this lemma, the situation of quandle colorings of composite knots
may differ for non-faithful quandles, and indeed, the computer calculations
reveal this.  In the following sections we investigate these cases.
%This paragraph is brought up since it is an explanation of computations.
We used the closed braid form for computer calculations of the number of
quandle colorings as in \cite{CESY}. In computing the number of colorings for
composite knots, we formed the closed braid form as depicted in
Figure~\ref{braidconnect}.  In the braid notation of \cite{KI}, an $m$-braid is
represented by $[a_1,...,a_s]$,  $a_i \in \Z$, where $a_i$ represents the braid
generator $\sigma_k$ if $a_i=k>0$, and $\sigma_k^{-1}$ if $k<0$.  The sign  of
$a_i$, ${\rm sign}(a_i)$,  is defined to be $1$ ($-1$, respectively), if $k>0$
(resp. $k<0$).  If $[a_1,...,a_s]$ ($[b_1,...,b_t]$, respectively) is an
$m$-braid (resp. $n$-braid) representative  for a knot $K$ (resp. $K'$), then
$$ [a_1,\ldots ,a_s,b_1+{\rm sign}(b_1)(m-1), \ldots ,b_t+ {\rm
sign}(b_t)(m-1)] $$ is an $(m+n-1)$-braid representative  for $K\# K'$.  For
example, for a trefoil $3_1$, $s=3$, $m=2$, $t=3$, $n=2$, and $[1,1,1,2,2,2]$
is a $(2+2-1)$-braid representative of $3_1 \# 3_1$.  The orientations of each
factor and the composite are defined by downward orientation of the braid form.
It is known \cite{BiMe} that for the braid index ${\rm Br}$,
the formula ${\rm Br}(K_1 \# K_2) = {\rm Br}(K_1) + {\rm Br}(K_2) -1$ holds. 

\begin{figure}[htb]
    \begin{center}
   \includegraphics[width=3.5in]{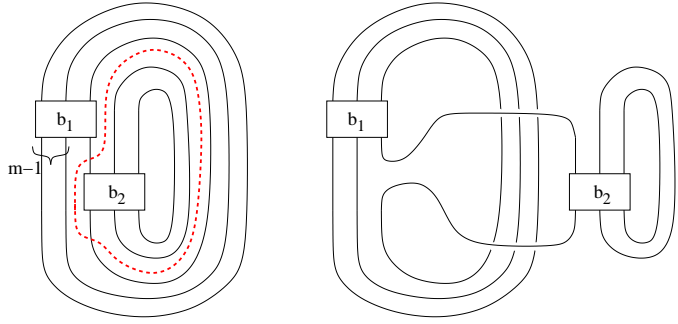}\\
    \caption{The connected sum of two closed braids.}\label{braidconnect}
    \end{center}
\end{figure}

\section{Distinguishing $K$ from $rm(K)$ via colorings of composite knots}\label{sec-rev}

Since quandle colorings do not distinguish $K$ from $rm(K)$, they do not
distinguish $m(K)$ from $r(K)$.  Consequently, in \cite{CESY}, distinguishing
$K$ from $m(K)$ by quandle colorings was examined only for chiral and
negative-amphicheiral knots. 

In this section, we exhibit computational results on distinguishing reversible
and chiral  knots $K$ from $rm(K)$ using quandle colorings of composite knots
$R\# K$ and $R \#rm(K)$ for knots $R$ and $K$. 

\begin{proposition}\label{prop-conj}
	Conjecture~\ref{conj-separate} 
	implies that for any knot $K$ such that $K \neq f(K)$ for some 
	$f \in \mathcal{G}$, there is a finite quandle $X$ and a prime knot $P$ (with braid index $2$) such that 
	$\kc{X}{P \# K} \neq \kc{X}{P \# f(K) } $. 
\end{proposition}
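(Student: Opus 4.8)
The plan is to apply Conjecture~\ref{conj-separate} to a pair of composite knots built from $K$ and a suitably generic prime knot $P$ of braid index $2$. Since the hypothesis provides $f\in\mathcal{G}$ with $K\neq f(K)$, necessarily $f\neq 1$, so $f\in\{r,m,rm\}$. Given a prime knot $P$ of braid index $2$, set $K_1=P\#K$ and $K_2=P\#f(K)$. Recall that for \emph{every} finite quandle $X$ one has $\kc{X}{K_1}=\kc{X}{K_2}$ whenever $K_1=K_2$ or $K_1=rm(K_2)$, while conversely, if $K_1\neq K_2$ and $K_1\neq rm(K_2)$, then Conjecture~\ref{conj-separate} produces a finite quandle $X$ with $\kc{X}{K_1}\neq\kc{X}{K_2}$. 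So the whole task reduces to choosing $P$ so that $K_1$ and $K_2$ are neither equal nor related by $rm$ as oriented knot types.

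For this I would take $P=T_{2,n}$, the $(2,n)$-torus knot, with $n$ odd, $n\geq 3$, and $n>1+2\,\mathrm{genus}(K)$ (note that $K$ is non-trivial, as otherwise $K=f(K)$). Such a $P$ is prime and has braid index $2$ (it is the closure of the $2$-braid $\sigma_1^{\,n}$), and it is reversible and chiral, so $rm(P)=m(P)=T_{2,-n}\neq P$; moreover, since Seifert genus is additive under connected sum, neither $T_{2,n}$ nor $T_{2,-n}$ occurs as a prime summand of $K$. By Schubert's uniqueness of prime decomposition of oriented knots, $K_1=K_2$ would force $K=f(K)$, contrary to hypothesis, so $K_1\neq K_2$. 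For the second inequality I would use $rm(A\#B)=rm(A)\#rm(B)$ together with $rm\circ rm=1$, $rm\circ r=m$, and $rm\circ m=r$ to rewrite $rm(K_2)$ as $rm(P)\#K$ when $f=rm$, as $rm(P)\#m(K)$ when $f=r$, and as $rm(P)\#r(K)$ when $f=m$. In each case the prime summand $T_{2,n}$ occurs in $K_1=T_{2,n}\#K$, but it does not occur in $rm(K_2)$: indeed $rm(P)=T_{2,-n}\neq T_{2,n}$, and $T_{2,n}$ is not a summand of $K$, $m(K)$, or $r(K)$, since each of these alternatives would force $T_{2,n}$ or $T_{2,-n}$ to be a prime summand of $K$, which is excluded by the genus bound. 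Hence $K_1\neq rm(K_2)$, and Conjecture~\ref{conj-separate} supplies the required finite quandle $X$ together with the prime braid-index-$2$ knot $P$.

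The step I expect to be the main obstacle is precisely the inequality $K_1\neq rm(K_2)$: a naive choice of $P$ can fail — for instance $P=3_1$ fails when $m(3_1)$ is a prime summand of $K$ — so the argument genuinely relies on Schubert's unique factorization together with a genus estimate guaranteeing that the torus knot $P$ is generic relative to $K$. Everything else amounts to routine bookkeeping with the commuting involutions $r$ and $m$ and the identities $g(A\#B)=g(A)\#g(B)$ and $\mathrm{pr}(g(K))=\{\,g(P'):P'\in\mathrm{pr}(K)\,\}$ for $g\in\mathcal{G}$, where $\mathrm{pr}(\cdot)$ denotes the multiset of prime summands.
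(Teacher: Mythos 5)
Your proof is correct and follows essentially the same route as the paper: compare $P\#K$ with $P\#f(K)$ using Schubert's unique prime factorization, choosing $P$ a chiral, reversible $(2,n)$-torus knot that cannot appear in $\mathcal{G}(P_i)$ for any prime summand $P_i$ of $K$. The only (harmless) difference is that you enforce this genericity via a genus bound, whereas the paper simply takes the crossing number of the torus knot large.
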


\begin{proof} 
	First we observe that for any knots $K_1$ and $K_2$ and $f \in \mathcal{G}$,
	$$f(K_1 \# K_2)= f(K_1) \# f(K_2),$$ and for any prime knot $P$ and $f \in
	\mathcal{G}$, $f(P)$ is prime.  Let $K=P_1 \# \cdots \#P_n $ be the prime
	factorization of $K$.  Then $$f(K)=f(P_1) \# \cdots \# f(P_n)$$ is the prime
	factorization of $f(K)$.  Let $P$ be a prime knot such that $P$ is not in
	$\mathcal{G}(P_i)$ for $i = 1, \ldots, n$ and $P \ne rm(P)$ (take, for example,
	a $(2, n)$-torus knot, that is, the closure of a $2$-braid, of a large crossing
	number for $P$).  Clearly  $P\#K \neq P\#f(K)$.  The prime factorization of
	$rm(P\#K)$ is $$ rm(P) \# rm(P_1) \# \cdots \#rm(P_n)$$ and by the definition
	of $P$ we again have by uniqueness of prime factorization that $rm(P\#K) $ is
	not equal to $P \#f(K)$. By the conjecture it follows that there is a finite
	quandle $X$ such that $\kc{X}{P \# K} \neq \kc{X}{P \# f(K) } $
\end{proof}

As a corollary to the proof of Proposition~\ref{prop-conj}, we obtain the
following.

\begin{corollary}
    For any knot $K$ such that $K \neq f(K)$ for some $f \in \mathcal{G}$,
    there exists a prime knot $P$ such that the fundamental quandles of $P \#
    K$ and $P \# f(K) $ are not isomorphic.
\end{corollary}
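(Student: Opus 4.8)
The plan is to reduce the statement to the classical fact recalled above from \cite{Joyce,Mat}: for knots $J$ and $J'$, the fundamental quandles of $J$ and $J'$ are isomorphic if and only if $J=J'$ or $J=rm(J')$. So it suffices to produce a prime knot $P$ for which \emph{both} $P\#K\neq P\#f(K)$ and $P\#K\neq rm(P\#f(K))$ hold. I would simply take the prime knot $P$ already constructed in the proof of Proposition~\ref{prop-conj}, since those two inequalities are exactly (a rephrasing of) what is verified there.

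Concretely, write $K=P_1\#\cdots\#P_n$ for the prime factorization of $K$ and choose $P$ to be a $(2,k)$-torus knot (the closure of a $2$-braid) with $k$ large enough that $P$ is not in $\mathcal{G}(P_i)$ for any $i$ and $P\neq rm(P)$; such a $P$ exists because only the finitely many orbits $\mathcal{G}(P_1),\dots,\mathcal{G}(P_n)$ are excluded and the torus knots $T(2,k)$ are pairwise inequivalent and satisfy $T(2,k)\neq rm(T(2,k))$ for $k$ large. Since $\mathcal{G}=\{1,r,m,rm\}$ is a group, the maps $f$, $rm$, and $rm\circ f$ all lie in $\mathcal{G}$. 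For the first inequality, if $P\#K=P\#f(K)$ then, comparing the prime factorizations $\{P,P_1,\dots,P_n\}$ and $\{P,f(P_1),\dots,f(P_n)\}$ and using that $P$ differs from every $P_i$ and every $f(P_i)$ (because $P\notin\mathcal{G}(P_i)$), uniqueness of prime decomposition lets one cancel the common factor $P$ and conclude $K=P_1\#\cdots\#P_n=f(P_1)\#\cdots\#f(P_n)=f(K)$, contradicting the hypothesis. For the second inequality, note $rm(P\#f(K))=rm(P)\#(rm f)(P_1)\#\cdots\#(rm f)(P_n)$; if this equalled $P\#K$, then by uniqueness of prime decomposition $P$ would coincide with $rm(P)$ or with some $(rm f)(P_i)\in\mathcal{G}(P_i)$, again contradicting the choice of $P$. (This is precisely the inequality $rm(P\#K)\neq P\#f(K)$ checked in the proof of Proposition~\ref{prop-conj}, rephrased using that $rm$ is an involution on knot types.)

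With both inequalities established, the Joyce--Matveev characterization immediately yields that the fundamental quandles of $P\#K$ and $P\#f(K)$ are not isomorphic. I do not expect a genuine obstacle: all the mathematical content sits in the Joyce--Matveev theorem and in Schubert's unique prime decomposition of knots, both of which may be used freely here; the only point requiring attention is the bookkeeping with the group $\mathcal{G}$ and the cancellation of the common prime factor $P$, which is exactly the argument already appearing in the proof of Proposition~\ref{prop-conj}.
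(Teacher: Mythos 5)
Your proposal is correct and is essentially the paper's own argument: the paper states this corollary as an immediate consequence of the proof of Proposition~\ref{prop-conj}, where exactly the two inequalities $P\#K\neq P\#f(K)$ and $rm(P\#K)\neq P\#f(K)$ (equivalent, since $rm$ is an involution, to your $P\#K\neq rm(P\#f(K))$) are verified for the same choice of a $(2,n)$-torus knot $P$, and then the Joyce--Matveev characterization of fundamental quandles finishes the proof. Your bookkeeping with prime decompositions and the group $\mathcal{G}$ matches the paper's reasoning, so there is nothing to correct.
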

% moved

 Recall from
Corollary~\ref{cor-rm} that if  $X$ is a  finite faithful quandle, then we
cannot distinguish $R\# K$ from $R\#  rm(K)$.  Thus to apply this technique, we
must use non-faithful  quandles. 

\begin{remark}\label{rem-cocy}
{\rm
For reversible or chiral prime knots $K$ up to 12 crossings and up to braid
index $4$, among the Rig quandles
of order less than $36$, %
only the quandles $Q(24,2)$ and $Q(27, 14)$
distinguished $R\# K$ and $R \# m(K)$ for some closed $2$-braids $R$ by the
condition $$   \kc{E}{R \# K} \neq  \kc{E}{R \# rm(K)} .$$ We noticed that
these are abelian extensions of $Q(6,2)$ and $Q(9, 6)$ with coefficient groups
$\Z_4$ and $\Z_3$, respectively.  In the remainder of the section, we give an
interpretation of this method in terms of the quandle cocycle invariant, and
extend this method to quandles of order larger than 36. % Rig quandles.
Corollary~\ref{cor-rm} and Proposition~\ref{prop-30-4} partly explain why only
abelian extensions worked for this purpose among Rig quandles.
Remark~\ref{rem-endmono} suggests why many abelian extensions do not work. 
}
\end{remark}

%For reminder:
%Homology groups other than $\Z_2$ for 12 elements Rig quandles are as follows.
%$Q(12, i)$ $i=3$ $\Z_{10}$, 
%$i=5,6,7$ $\Z_4$, 
%$i=9$ $\Z_4 \times \Z_4$, 
%$i=10$ $\Z_6$. 
%All not Latin.

Let $X$ be a quandle, $A$ be a finite abelian group, and $\phi \in Z^2_Q(X, A)$
be a $2$-cocycle with coefficient group $A$.  Let $\Phi_\phi(K) = \sum_{g \in A} a_g g \in \Z[A]$ be the
cocycle invariant of a knot $K$.  We write $C_g ( \Phi_\phi(K) ) = a_g$.  In
particular, $C_e( \Phi_\phi(K) ) \in \Z$ denotes  the coefficient of the
identity element $e\in A$. 

An examination of the proof of  Theorem 4.1 in \cite{CENS}
reveals the following two lemmas.
For convenience of the reader, we include a proof of Lemma~\ref{lem-extmono}.

\begin{lemma} [\cite{CENS}] 
	\label{lem-const} 
	Let $E$ be an abelian extension of $X$ with respect to a $2$-cocycle $\phi$
	with coefficient group $A$.  Let $K$ be a knot that is end monochromatic with
	$X$.  
	Then $\kc{E}{K}=C_e( \Phi_\phi(K) ) |A|$. % = \kc{X}{K} |A|$.
\end{lemma}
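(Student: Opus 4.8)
The plan is to compare colourings of $K$ by $E$ with colourings by $X$ through the canonical projection. Recall $E=E(X,A,\phi)=X\times A$ with $(x,a)*(y,b)=(x*y,a+\phi(x,y))$, and that the first-coordinate map $p\colon E\to X$, $p(x,a)=x$, is a surjective quandle homomorphism. Hence post-composition with $p$ carries a colouring $\mathcal{D}$ of a diagram of $K$ by $E$ to a colouring $p_{\#}\mathcal{D}$ by $X$, giving $p_{\#}\colon\skc{E}{K}\to\skc{X}{K}$; it is enough to compute the size of each fibre $p_{\#}^{-1}(\mathcal{C})$, $\mathcal{C}\in\skc{X}{K}$, and sum. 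For this I would use the action of $A$ on $\skc{E}{K}$ in which $t\in A$ adds $t$ to the $A$-coordinate of the colour of every arc; a one-line check from the extension rule shows this is again an $E$-colouring (indeed it is an arc-wise quandle automorphism of $E$), the action is free since it alters the colour of every arc, and it preserves every fibre of $p_{\#}$. Therefore each nonempty fibre has exactly $|A|$ elements, so
\[
\kc{E}{K}=|A|\cdot\#\{\mathcal{C}\in\skc{X}{K} : p_{\#}^{-1}(\mathcal{C})\neq\emptyset\}.
\]

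The core step is to decide when an $X$-colouring $\mathcal{C}$ lifts. Fix a diagram of $K$, a base point $b$ on an arc $\alpha_0$, and the associated $1$-tangle; since $(K,X)$ is end monochromatic, $\mathcal{C}$ assigns the same colour to the two ends of this tangle. Assign $(\mathcal{C}(\alpha_0),a_0)$ to $\alpha_0$ for a chosen $a_0\in A$ and propagate along the strand: at each crossing $\tau$ the extension rule determines the $A$-coordinate of the arc coloured $x_\tau*y_\tau$ as (the $A$-coordinate of the arc coloured $x_\tau$) $+\,\phi(x_\tau,y_\tau)$, where $(x_\tau,y_\tau)$ are the source colours, so that following the orientation the net change in the $A$-coordinate across $\tau$ is addition of $\phi(x_\tau,y_\tau)^{\epsilon(\tau)}$ (written additively), an increment depending only on $\mathcal{C}$. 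Travelling once around $K$ determines the $A$-coordinates of all arcs from $a_0$, and the labelling is consistent when we return to $\alpha_0$ precisely when $\prod_{\tau}\phi(x_\tau,y_\tau)^{\epsilon(\tau)}$, the Boltzmann weight of $\mathcal{C}$, is the identity of $A$; this is exactly the standard coloring/cocycle state-sum computation (cf.\ \cite{CJKLS,CKS}), and the end-monochromatic hypothesis is what lets us pass from the $1$-tangle back to the closed knot so that no obstruction from the $X$-coordinate intervenes. Consequently $\mathcal{C}$ lifts to an $E$-colouring of $K$ if and only if its Boltzmann weight is trivial, and when it does the $|A|$ choices of $a_0$ give the $|A|$ elements of its fibre.

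It remains to identify the count. By definition $\Phi_\phi(K)=\sum_{\mathcal{C}\in\skc{X}{K}}\prod_{\tau}\phi(x_\tau,y_\tau)^{\epsilon(\tau)}\in\Z[A]$, so the coefficient $C_e(\Phi_\phi(K))$ of the identity $e$ is the number of $X$-colourings with trivial Boltzmann weight, i.e.\ the number that lift. Combining this with the displayed formula yields $\kc{E}{K}=|A|\cdot C_e(\Phi_\phi(K))$. I expect the main obstacle to be the core step: carefully tracking the orientation and crossing-sign conventions to confirm that the unique obstruction to a globally consistent $A$-labelling is the vanishing of the Boltzmann weight; the reduction via the $A$-action and the final bookkeeping are routine.
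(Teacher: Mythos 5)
Your proof is correct and follows essentially the same route as the paper's (taken from \cite{CENS}): lift a given $X$-colouring arc-by-arc along the oriented strand, identify the Boltzmann weight $\prod_\tau\phi(x_\tau,y_\tau)^{\epsilon(\tau)}$ as the sole obstruction to closing up, and count $|A|$ lifts for each unobstructed colouring, so that $\kc{E}{K}=C_e(\Phi_\phi(K))\,|A|$. One cosmetic remark: freeness of the $A$-action alone only shows each nonempty fibre of $p_\#$ has size a multiple of $|A|$; the exact count $|A|$ is supplied by your subsequent propagation argument, which shows a lift is determined by the single choice of $a_0$.
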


%The following is a restatement of Theorem 4.1 in \cite{CENS}.

\begin{lemma} 
\label{lem-extmono} Suppose $(K, X)$ is end monochromatic, and
	$E=E(X, A, \phi)$ is an abelian extension of $X$.  Then $(K, E)$ is end
	monochromatic if and only if  
	$ \Phi_\phi(K) = \kc{X}{K} \, e $. %C_e( \Phi_\phi(K) ) e $. 
\end{lemma}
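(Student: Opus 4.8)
I would prove Lemma~\ref{lem-extmono} by directly analyzing how colorings of $K$ by $E = E(X,A,\phi)$ project to colorings by $X$, exploiting the fibration-like structure of the abelian extension. First I would fix a base point $b$ on a $1$-tangle diagram of $K$; since $(K,X)$ is end monochromatic, every $X$-coloring $\mathcal{C}$ of this tangle already assigns the same color, say $x\in X$, to both end arcs. I would then observe that $E$-colorings of $K$ lying over a fixed $X$-coloring $\mathcal{C}$ are governed by the second coordinate: given the value $a\in A$ on the arc containing $b$, propagating around the diagram and using the rule $(x,a)*(y,b)=(x*y,a+\phi(x,y))$ forces the value on every other arc, and in particular the value on the \emph{other} end arc is $a + \sum_\tau \epsilon(\tau)\phi(x_\tau,y_\tau)$, where the sum is the total cocycle weight of $\mathcal{C}$ (the signs $\epsilon(\tau)$ arising exactly as in the definition of the cocycle invariant, cf.\ Figure~\ref{coloredXing}). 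Call this total weight $w(\mathcal{C})\in A$.

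The second step is the bookkeeping: $(K,E)$ is end monochromatic iff for \emph{every} $E$-coloring the two end arcs agree, i.e.\ iff $w(\mathcal{C}) = e$ for every $X$-coloring $\mathcal{C}$ of $K$ (the first coordinates automatically agree by end-monochromaticity of $(K,X)$, and for the second coordinates, agreement of $a$ with $a+w(\mathcal{C})$ is equivalent to $w(\mathcal{C})=e$, independently of $a$). Now I relate $\{w(\mathcal{C})\}$ to the cocycle invariant. By definition $\Phi_\phi(K) = \sum_{\mathcal{C}} w(\mathcal{C}) \in \Z[A]$ where the sum is over all $X$-colorings of the underlying \emph{knot} $K$ (closure of the tangle); each such coloring is precisely an $X$-coloring of the tangle with matching end colors, which by end-monochromaticity of $(K,X)$ is \emph{every} tangle coloring. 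So $\Phi_\phi(K) = \sum_{\mathcal{C}} w(\mathcal{C})$ over exactly the colorings appearing above, and this equals $\kc{X}{K}\, e$ in $\Z[A]$ precisely when $w(\mathcal{C}) = e$ for all $\mathcal{C}$ — since a sum of $\kc{X}{K}$ group elements of $A$ equals $\kc{X}{K}\,e$ iff every summand is $e$. Combining the two equivalences gives the claim.

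The one genuinely delicate point I expect to be the main obstacle is pinning down the sign conventions and the direction of propagation so that the "total weight of a closed loop / end-to-end defect" really is $\sum_\tau \epsilon(\tau)\phi(x_\tau,y_\tau)$ rather than some variant with a sign flip or with the cocycle evaluated on the wrong pair at negative crossings. This is exactly the computation whose correctness underlies the cocycle invariant being a knot invariant, so I would lean on the already-cited setup around Figure~\ref{coloredXing} and on the referenced proof of Theorem~4.1 in \cite{CENS} (which the excerpt says these lemmas are extracted from) rather than redoing it from scratch; the rest is the elementary observation that in $\Z[A]$ a sum of $N$ elements equals $N\cdot e$ iff all are $e$. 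I would also note in passing that the "only if" direction needs that $K$ actually admits at least the trivial coloring by $X$ (which it does, with weight $e$), so the hypothesis $(K,X)$ end monochromatic is not vacuous and the equivalence is nontrivial in both directions.
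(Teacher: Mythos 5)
Your proposal is correct and follows essentially the same route as the paper: it is the obstruction interpretation from \cite{CENS}, lifting each $X$-coloring of the $1$-tangle to an $E$-coloring by propagating the $A$-coordinate from one end to the other, identifying the end-to-end defect with the cocycle weight $\prod_\tau \phi(x_\tau,y_\tau)^{\epsilon(\tau)}$, and then observing that $\Phi_\phi(K)=\kc{X}{K}\,e$ exactly when every such weight is trivial. The bookkeeping step you add (that end monochromaticity of $(K,X)$ identifies knot colorings with all tangle colorings, and that a sum of $\kc{X}{K}$ group elements equals $\kc{X}{K}\,e$ only if each is $e$) is exactly what the paper leaves implicit.
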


\begin{proof}
	In \cite{CENS} an interpretation of the cocycle invariant as an obstruction
	to extending a coloring of a knot diagram $K$ by $X$ to a coloring by the
	abelian extension $E$ of $X$ with respect to a $2$-cocycle $\phi$ was
	given as follows. Let ${\cal C}$ be a coloring of a $1$-tangle $S$ of $K$
	with initial and terminal end points $b_0$, $b_1$, respectively.  Suppose
	$(K, X)$ is end monochromatic, so that ${\cal C}(b_0)={\cal C}(b_1)=x_0 \in
	X$.  Let $a_0 \in A$ and assign a color $(x_0, a_0) \in E=X \times A$ to
	the arc at $b_0$.  By traveling along the diagram from $b_0$ to $b_1$, a
	color of $S$ by $E$ is defined inductively using colors by $X$; if an
	under-arc colored by $(x,a)$ goes under an over-arc colored by $(y, b)$ at
	a positive crossing, then the other under-arc receives a color $(x*y, a\,
	\phi(x,y))$.  The color extends at negative crossing as well.  Then the
	coloring thus extended to $S$ has the color $(x_0, a_0\,  d)$ at the arc at
	$b_1$, where $d \in A$ is the contribution of the cocycle invariant $d=
	\prod_{\tau} \phi(x_\tau, y_\tau)^{\epsilon(\tau) } \in A$.  Thus the
	coloring by $X$ extends to that by $E$ if and only if $d$ is the identity
	element.
\end{proof}

\begin{remark}
	\label{rem-endmono}
	{\rm
	The examples mentioned in Remark~\ref{rem-endmonoex} are explained by
	Lemma~\ref{lem-extmono}.  
	% The following Rig quandles 
	Among Rig quandles of order less than 36, the following % 
	are abelian extensions
	and end monochromatic for all knots up to 9 crossings:
	\begin{align*}
		& Q(12,1), && Q(20,3), && Q(24,3), && Q(24,4), && Q(24,5),\\
		& Q(24,6), && Q(24,14), && Q(24,16), && Q(24,17), && Q(30,1),\\
		& Q(30,16), && Q(32,5), && Q(32,6), && Q(32,7), && Q(32,8).
	\end{align*}
	Thus we conjecture that this is the case for all knots. The corresponding
	quandle $X$ for these abelian extensions $E$ are found in \cite{CY}, and
	they are, respectively:
	\begin{align*}
		& Q(6,1), && Q(10,1), && Q(12,6), && Q(12,5), && Q(12,8),\\
		& Q(12,9), && Q(12,7), && Q(12,8), && Q(12,8), && Q(15,2),\\
		& Q(15,7), && Q(16,4), && Q(16,4), && Q(16,5), && Q(16,6).
	\end{align*}
	Duplicates in the list of $X$ are due to non-cohomologous $2$-cocycles of
	the same quandle.  
%	If this conjecture holds, then  non-trivial $2$-cycles,
%	that are dual to the corresponding $2$-cocycles of extensions,  are not
%	represented by colored knot diagrams \cite{CKS-geom}.  Instead, they are
%	represented by colored virtual knot diagrams.  
%	This situation motivates us
%	to define the {\it genus} of a quandle $2$-cocycle to be the minimum genus
%	of virtual knot diagrams that represent its dual. The above mentioned
%	$2$-cocycles, then, have positive genera, if the conjecture holds.

	There are non-faithful quandles that are not abelian extensions, see
	Proposition~\ref{prop-30-4}, and we do not know any characterization  of
	knots that are end monochromatic with such quandles.  All prime knots up to
	9 crossings are end monochromatic with  $Q(30,4)$. 
	}
\end{remark}

\begin{definition}[e.g.~\cite{CJKS}]
	{\rm 
	For an element $a= \sum_h a_h h \in \Z [A]$, the element
	$\overline{a}=\sum_h a_h h^{-1} \in \Z [A]$ is called the {\it conjugate} of
	$a$. 
	}
\end{definition}

\begin{lemma}[\cite{CJKS}] \label{lem-CJKS}
	$\Phi_{\phi} (K)= \overline{ \Phi_{\phi} ( rm(K) ) }$.
\end{lemma}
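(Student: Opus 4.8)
The plan is to trace through the definition of the cocycle invariant and see how each ingredient transforms under the operation $rm$, which is orientation-reversing on $\Sym^3$ but orientation-preserving when we also reverse the orientation of the knot. First I would recall that a diagram $D$ for $rm(K)$ can be obtained from a diagram $D$ for $K$ by taking the mirror image of the planar diagram: this reverses every crossing sign, so $\epsilon(\tau)$ becomes $-\epsilon(\tau)$ at each crossing $\tau$. The effect on the knot orientation is handled by the simultaneous $r$: the point is that mirroring the diagram and then reversing the strand orientation produces precisely a diagram of $rm(K)$.

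The key step is the bijection between colorings. Since the fundamental quandles of $K$ and $rm(K)$ are isomorphic (stated in the excerpt, following \cite{Joyce,Mat}), there is a natural bijection $\skc{X}{K} \to \skc{X}{rm(K)}$; concretely, a coloring $\cal C$ of a diagram of $K$ induces a coloring $\cal C'$ of the mirrored-and-reoriented diagram of $rm(K)$, where $\cal C'$ assigns to each arc the same quandle element. What I need to check is how the source colors at a crossing behave under this bijection. At a crossing $\tau$ of $D$ with source colors $(x_\tau, y_\tau)$ and sign $\epsilon(\tau)$, the corresponding crossing $\tau'$ in the mirrored reoriented diagram has the opposite sign, and — this is the content to verify from Figure~\ref{coloredXing} — the source colors at $\tau'$ are again $(x_\tau, y_\tau)$ (the over-arc color and the incoming under-arc color are unchanged, since mirroring swaps the roles of "positive" and "negative" crossing diagrams, and reversing orientation swaps which under-arc is incoming, and these two swaps compose to the identity on the labelling of source colors). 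Granting this, the local contribution at $\tau'$ is $\phi(x_\tau, y_\tau)^{\epsilon(\tau')} = \phi(x_\tau, y_\tau)^{-\epsilon(\tau)}$, which is the inverse in $A$ of the contribution $\phi(x_\tau, y_\tau)^{\epsilon(\tau)}$ at $\tau$.

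Multiplying over all crossings, the total weight $\prod_\tau \phi(x_\tau, y_\tau)^{\epsilon(\tau')}$ of the coloring $\cal C'$ of $rm(K)$ equals the inverse in $A$ of the total weight $\prod_\tau \phi(x_\tau, y_\tau)^{\epsilon(\tau)}$ of $\cal C$. Summing over all colorings and using the bijection $\cal C \leftrightarrow \cal C'$, we get
\[
\Phi_\phi(rm(K)) = \sum_{\cal C'} \prod_\tau \phi(x_\tau,y_\tau)^{-\epsilon(\tau)} = \sum_{\cal C} \left( \prod_\tau \phi(x_\tau,y_\tau)^{\epsilon(\tau)} \right)^{-1} = \overline{\Phi_\phi(K)},
\]
by the definition of the conjugate in $\Z[A]$. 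Applying $\overline{\phantom{x}}$ to both sides (it is an involution on $\Z[A]$) and replacing $rm(K)$ by $K$ using $rm(rm(K)) = K$ yields the stated form $\Phi_\phi(K) = \overline{\Phi_\phi(rm(K))}$.

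The main obstacle I anticipate is the careful bookkeeping in the middle paragraph: verifying from the crossing convention in Figure~\ref{coloredXing} that under "mirror the diagram and reverse the strand orientation" the source-color pair at each crossing is genuinely preserved (not swapped or altered) while only the sign flips. This is the one place where one must be scrupulous about conventions rather than invoke a general principle; once it is pinned down, the rest is a formal manipulation in the group ring.
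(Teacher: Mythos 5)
Your argument is correct: the paper itself gives no proof of this lemma (it is quoted from \cite{CJKS}), and your proof is essentially the standard one from that reference — realize $rm(K)$ by mirroring the diagram and reversing the string orientation, observe that identical arc labels again satisfy the coloring condition because the source-color pair at each crossing is preserved while every crossing sign flips, and conclude that each coloring's weight is inverted, giving $\Phi_\phi(rm(K))=\overline{\Phi_\phi(K)}$. The one delicate point, which you correctly isolate, is the convention check that the source under-arc is determined by the over-arc's co-orientation, so that mirroring plus orientation reversal leaves the pair $(x_\tau,y_\tau)$ unchanged; once that is pinned down the rest is formal in $\Z[A]$.
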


%In fact, the proof of this lemma proceeds as follows: (1)  each 
%coloring ${\cal C}$ of $K$ uniquely induces a coloring $rm({\cal C})$ of $rm(K)$,
%giving rise to one-to-one correspondence $rm: \skc{X}{K} \rightarrow \skc{X}{rm(K)}$,
%(2) if the contribution of ${\cal C} \in \skc{X}{K} $ to the cocycle invariant is $w \in A$, 
%then the contribution of $rm({\cal C}) \in \skc{X}{rm(K)} $ to the cocycle invariant is $w^{-1} \in A$.

\begin{definition}
	The value of the quandle cocycle invariant $\Phi_\phi(K)$ of a knot $K$ with
	respect to a $2$-cocycle $\phi$ of a quandle $X$ is called \emph{asymmetric} if
	$\Phi_{\phi} (K) \neq  \overline{ \Phi_{\phi} ( K ) }$. 
\end{definition}

\begin{corollary}\label{cor-phimirror}
	If  $\Phi_\phi(K)$ is asymmetric,
	then $K \neq rm(K)$. 
\end{corollary}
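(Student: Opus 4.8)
The plan is to prove the contrapositive directly using Lemma~\ref{lem-CJKS}. Assume that $K = rm(K)$; I want to conclude that $\Phi_\phi(K)$ is symmetric, i.e.\ $\Phi_\phi(K) = \overline{\Phi_\phi(K)}$. Since $K = rm(K)$ as oriented knot types, and since the cocycle invariant $\Phi_\phi$ is a knot invariant (it is well-defined on equivalence classes of oriented knots), we have $\Phi_\phi(K) = \Phi_\phi(rm(K))$. Now apply Lemma~\ref{lem-CJKS}, which states $\Phi_\phi(K) = \overline{\Phi_\phi(rm(K))}$. Combining the two equalities gives $\Phi_\phi(K) = \overline{\Phi_\phi(rm(K))} = \overline{\Phi_\phi(K)}$, so $\Phi_\phi(K)$ is symmetric, contradicting the hypothesis that it is asymmetric.

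The key steps, in order, are: first, invoke invariance of $\Phi_\phi$ under orientation-preserving homeomorphism to get $\Phi_\phi(rm(K)) = \Phi_\phi(K)$ from the assumed symmetry $K = rm(K)$; second, substitute this into the identity of Lemma~\ref{lem-CJKS}; third, read off the equality $\Phi_\phi(K) = \overline{\Phi_\phi(K)}$ and observe it is exactly the negation of asymmetry. This is a short deduction — essentially a one-line application of the already-established Lemma~\ref{lem-CJKS} — so there is no real obstacle; the only thing to be careful about is that the argument is run as a contrapositive (assume $\Phi_\phi(K)$ asymmetric, then $K \neq rm(K)$ follows because $K = rm(K)$ would force symmetry).

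Concretely, the proof reads: Suppose, for contradiction, that $K = rm(K)$. Since $\Phi_\phi$ is an invariant of oriented knots, $\Phi_\phi(K) = \Phi_\phi(rm(K))$. By Lemma~\ref{lem-CJKS}, $\Phi_\phi(K) = \overline{\Phi_\phi(rm(K))} = \overline{\Phi_\phi(K)}$, so $\Phi_\phi(K)$ is symmetric, contradicting the assumption that it is asymmetric. Hence $K \neq rm(K)$.
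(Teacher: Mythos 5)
Your argument is correct and is exactly the deduction the paper intends: the corollary is an immediate consequence of Lemma~\ref{lem-CJKS}, since $K = rm(K)$ together with invariance of $\Phi_\phi$ would force $\Phi_\phi(K)=\overline{\Phi_\phi(K)}$. The contrapositive formulation you give is the same (one-line) route the paper takes.
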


%\begin{proof}
%    %{\it Proof.\/}
%Since $K$ is reversible, 
%$\Phi_\phi( m(K) ) = \Phi_\phi( mr(K) )$. 
%By Lemma~\ref{lem-CJKS}, 
%$ \Phi_\phi( mr(K) ) =  \overline{ \Phi_{\phi} ( K ) }$. 
%Since $\Phi_{\phi} ( K )$ is asymmetric, the result follows.
%\end{proof}
%%$\Box$

From the above corollary we can sometimes distinguish $K$ from $rm(K)$ using
the cocycle invariant for some quandles.

\begin{proposition}[\cite{Nos}]\label{prop-nosaka}
    Let $\phi$ be a $2$-cocycle of a finite 
    homogeneous % connected 
    quandle $X$ with coefficient group $A$.
    Suppose that $K_1$  or % and 
    $K_2$ is % are 
    end monochromatic with $X$.
    Then \[
        |X| \Phi_\phi(K_1 \# K_2) = \Phi_\phi (K_1) \Phi_\phi (K_2).
    \]
\end{proposition}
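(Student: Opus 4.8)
The plan is to mimic the bijective argument used in the proof of Lemma~\ref{lem-nosaka2}, but now keeping track of the cocycle weights. First I would fix base points $b_1, b_2$ on diagrams of $K_1$ and $K_2$, form the associated $1$-tangles $S_1, S_2$, and write $K_1 \# K_2$ as the $1$-tangle obtained by joining $S_1$ and $S_2$ end to end, with $c_1, c_2$ the arcs arising from $b_1, b_2$ after gluing (as in Figure~\ref{connectsum}). As in Lemma~\ref{lem-nosaka2}, any coloring ${\cal C}$ of $K_1 \# K_2$ by $X$ restricts to colorings ${\cal C}_1, {\cal C}_2$ of $S_1, S_2$, and since one of $K_1, K_2$ is end monochromatic, the colors at $c_1$ and $c_2$ agree; conversely, given $x \in X$ and colorings ${\cal C}_i \in \skc{(X,x)}{K_i, b_i}$ that agree at the base arcs, they glue to a unique coloring ${\cal C}_1 \# {\cal C}_2$ of $K_1 \# K_2$. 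This gives the bijection
\[
\bigcup_{x \in X} \left[\, \skc{(X,x)}{K_1, b_1} \times \skc{(X,x)}{K_2, b_2} \,\right] \longrightarrow \skc{X}{K_1 \# K_2}.
\]

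The new ingredient is that the set of crossings of a diagram of $K_1 \# K_2$ formed this way is the disjoint union of the crossings of the chosen diagrams of $S_1$ and $S_2$ (the gluing arcs introduce no new crossings), and the source colors at a crossing coming from $S_i$ depend only on ${\cal C}_i$. Hence for ${\cal C} = {\cal C}_1 \# {\cal C}_2$ the Boltzmann weight factors:
\[
\prod_{\tau} \phi(x_\tau, y_\tau)^{\epsilon(\tau)} = \left(\prod_{\tau \in S_1} \phi(x_\tau, y_\tau)^{\epsilon(\tau)}\right)\left(\prod_{\tau \in S_2} \phi(x_\tau, y_\tau)^{\epsilon(\tau)}\right) = W_1({\cal C}_1)\, W_2({\cal C}_2),
\]
where $W_i({\cal C}_i) \in A$ is the weight of ${\cal C}_i$ on $S_i$; note this weight agrees with the contribution to $\Phi_\phi(K_i)$ of the corresponding coloring of the knot $K_i$, since closing up the $1$-tangle by a trivial arc adds no crossings. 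Summing over the bijection above and using that $W_i$ depends only on ${\cal C}_i$, I would compute
\[
\Phi_\phi(K_1 \# K_2) = \sum_{x \in X} \ \sum_{{\cal C}_1 \in \skc{(X,x)}{K_1, b_1}} \ \sum_{{\cal C}_2 \in \skc{(X,x)}{K_2, b_2}} W_1({\cal C}_1)\, W_2({\cal C}_2) = \sum_{x \in X} \left(\sum_{{\cal C}_1} W_1({\cal C}_1)\right)\left(\sum_{{\cal C}_2} W_2({\cal C}_2)\right).
\]

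It remains to identify the inner factors. For this I would use that $X$ is homogeneous: exactly as in Lemma~\ref{lem-basecolor}, an automorphism $h$ of $X$ with $h(x) = y$ induces a bijection $\skc{(X,x)}{K_i, b_i} \to \skc{(X,y)}{K_i, b_i}$, and since $h$ is a quandle automorphism it preserves the combinatorics of the coloring and hence the Boltzmann weight --- wait, one must be slightly careful here, because $h$ acts on $X$ but not on $A$, so $W_i(h_\#({\cal C}_i))$ need not equal $W_i({\cal C}_i)$. What is true, however, is that the multiset of weights $\{W_i({\cal C}_i) : {\cal C}_i \in \skc{(X,x)}{K_i,b_i}\}$ is independent of $x$, so the partial sum $P_i(x) := \sum_{{\cal C}_i \in \skc{(X,x)}{K_i,b_i}} W_i({\cal C}_i) \in \Z[A]$ is independent of $x$; call it $P_i$. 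Summing over the base-arc color gives $\sum_{x \in X} P_i = |X|\, P_i = \Phi_\phi(K_i)$, i.e.\ $P_i = \Phi_\phi(K_i)/|X|$ (this division makes sense because the sum over $x$ genuinely repeats $P_i$ exactly $|X|$ times). Substituting back,
\[
\Phi_\phi(K_1 \# K_2) = \sum_{x \in X} P_1 P_2 = |X| \cdot \frac{\Phi_\phi(K_1)}{|X|} \cdot \frac{\Phi_\phi(K_2)}{|X|} = \frac{\Phi_\phi(K_1)\,\Phi_\phi(K_2)}{|X|},
\]
which rearranges to $|X|\,\Phi_\phi(K_1 \# K_2) = \Phi_\phi(K_1)\,\Phi_\phi(K_2)$. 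The main obstacle I anticipate is precisely the bookkeeping in the last paragraph: one must check that the $1$-tangle weight $W_i$ matches the knot weight (trivial closure, no extra crossings), that the crossings of the glued diagram split cleanly between $S_1$ and $S_2$, and that the homogeneity argument is applied to the $\Z[A]$-valued partial sum $P_i(x)$ rather than naively to individual weights --- the automorphism $h$ permutes colorings with a given base-arc color but does not fix their weights, yet it does show the partial sums $P_i(x)$ are all equal, which is exactly what is needed.
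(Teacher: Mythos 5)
The paper does not actually prove this proposition (it is quoted from \cite{Nos}), so your argument has to stand on its own, and its skeleton is right: the tangle decomposition and bijection from Lemma~\ref{lem-nosaka2}, the factorization of the Boltzmann weight over the crossings of the two tangles, and the reduction to the base-colored partial sums $P_i(x)$. You also correctly flagged the danger that an automorphism $h\in{\rm Aut}(X)$ permutes colorings with prescribed base color but need not preserve weights, since $\phi(h(a),h(b))\neq\phi(a,b)$ in general. The gap is that you then simply assert that the multiset of weights, hence $P_i(x)$, is independent of $x$: no argument is given, and homogeneity does not supply one. What does supply it is connectedness. The key missing lemma is that the inner action ${\cal C}\mapsto {\cal R}_z\circ{\cal C}$ on colorings of a closed knot diagram preserves the weight: by the $2$-cocycle condition the contribution at each crossing changes (additively) by $\phi(c_{\rm out},z)-\phi(c_{\rm in},z)$, where $c_{\rm in},c_{\rm out}$ are the colors of the incoming and outgoing under-arcs, and these corrections telescope to zero around the single component. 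Hence $P_i(x)=P_i(y)$ whenever $x$ and $y$ lie in the same ${\rm Inn}(X)$-orbit, and if $X$ is connected your computation closes.

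Moreover, this is not mere bookkeeping: under the hypotheses as you used them (homogeneous only), the claimed independence, and indeed the identity itself, can fail, so the gap cannot be patched without strengthening the hypothesis. Take $X=Q(4,1)\times T$, where $T$ is the trivial $2$-element quandle; $X$ is homogeneous (combine automorphisms of $Q(4,1)$ with the swap of the two levels) but has two ${\rm Inn}$-orbits. Define $\phi\bigl((a,i),(b,j)\bigr)=\psi_i(a,b)$, where $\psi_1$ is a nontrivial $2$-cocycle of $Q(4,1)$ with $\Z_2$ coefficients (giving $Q(8,1)$) and $\psi_2=0$; one checks directly that $\phi$ is a $2$-cocycle. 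Since the second coordinate is constant along a knot strand and $Q(4,1)$ is faithful, every knot is end monochromatic with $X$, and $\Phi_\phi(K)=\Phi_{\psi_1}(K)+\kc{Q(4,1)}{K}\,e$. Applying the connected case of the proposition to $Q(4,1)$ for each summand, the identity $|X|\,\Phi_\phi(3_1\#3_1)=\Phi_\phi(3_1)^2$ reduces to $\bigl(\Phi_{\psi_1}(3_1)-\kc{Q(4,1)}{3_1}\,e\bigr)^2=0$ in $\Z[\Z_2]$, which is false because $\Phi_{\psi_1}(3_1)$ is nontrivial (by Remark~\ref{rem-endmonoex} together with Lemma~\ref{lem-extmono}, $3_1$ is not end monochromatic with $Q(8,1)$). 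So your proof genuinely needs ${\rm Inn}(X)$-transitivity, i.e.\ connectedness -- which holds for all the quandles this paper applies the proposition to -- together with the weight-preservation lemma above; with those two ingredients your argument is complete.
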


%The connected sum is formed by 
%connecting the end points of $1$-tangles of two knots.
%For a connected sum $K=K_1 \# K_2$, 
%suppose that $K_i$ are end monochromatic with $X$ 
%and a coloring ${\cal C}$ of $K$ restricts 
%to colorings ${\cal C}_i$ of $K_i$, $i=1,2$. 
%Then each colored diagram of ${\cal C}_i$ represents a $2$-cycle $c_i$, $i=1,2$,
%and the colored diagram of $K$ represents a $2$-cycle $c=c_1 c_2$.
%In this case the value (contribution) of the invariant is 
%$\langle \phi, c \rangle = \phi(c) = \phi(c_1)  \phi(c_2) \in A$.

The following corollary relates the condition $$ \kc{E}{R \# K} \neq  \kc{E}{R
\# rm(K)} $$ to Corollary~\ref{cor-phimirror} via  asymmetry of the cocycle
invariant.

\begin{corollary}\label{cor-RK}
	Let $\phi$ be a $2$-cocycle of a finite connected faithful quandle $X$ with
	coefficient group $A$.  Assume that 
	$\Phi_\phi(R)= r_e e+ r_u u$ for $r_e, r_u \in \N$, the identity
	element $e$, and a non-identity element $u \in A$, and that $r_e=|X|$,
	that is, any non-trivial coloring contribute $u$ to the cocycle invariant.
	Suppose  a knot $K$ satisfies 
	\[
	\Phi_\phi(K)= k_e e+ k_u u + k_{u^{-1}} u^{-1} + V,
	\]
	where $V$ does not contain terms in $e$, $u$ or $u^{-1}$.  Then $k_u \neq
	k_{u^{-1}}$ if and only if 
	\[
	\kc{E}{R \# K} \neq  \kc{E}{R \# rm(K)},
	\]
	where $E$ is  the abelian extension of $X$ by $\phi$. 
\end{corollary}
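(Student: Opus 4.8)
The plan is to compute both sides of the claimed equivalence explicitly in terms of the coefficients of the cocycle invariants and compare. First I would unwind the left-hand quantities using the product formula for cocycle invariants under connected sum. Since $X$ is faithful, Lemma~\ref{lem-nosaka1} gives that $(R, X)$ and $(K, X)$ are end monochromatic, so Proposition~\ref{prop-nosaka} applies and yields $|X|\,\Phi_\phi(R\#K) = \Phi_\phi(R)\,\Phi_\phi(K)$ and likewise $|X|\,\Phi_\phi(R\#rm(K)) = \Phi_\phi(R)\,\Phi_\phi(rm(K))$. By Lemma~\ref{lem-CJKS}, $\Phi_\phi(rm(K)) = \overline{\Phi_\phi(K)}$, so the second product is $\Phi_\phi(R)\,\overline{\Phi_\phi(K)}$.

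Next I would translate the coloring numbers $\kc{E}{R\#K}$ and $\kc{E}{R\#rm(K)}$ into coefficients of these group-ring elements. Since $(R\#K, X)$ is end monochromatic (faithfulness again, or the fact that being end monochromatic is preserved under connected sum via the argument in Lemma~\ref{lem-nosaka2}), Lemma~\ref{lem-const} gives $\kc{E}{R\#K} = C_e(\Phi_\phi(R\#K))\,|A|$ and similarly $\kc{E}{R\#rm(K)} = C_e(\Phi_\phi(R\#rm(K)))\,|A|$. Thus the desired inequality $\kc{E}{R\#K}\neq\kc{E}{R\#rm(K)}$ is equivalent to $C_e(\Phi_\phi(R\#K)) \neq C_e(\Phi_\phi(R\#rm(K)))$, which by the product formulas above is equivalent to $C_e(\Phi_\phi(R)\,\Phi_\phi(K)) \neq C_e(\Phi_\phi(R)\,\overline{\Phi_\phi(K)})$ (the factor $|X|\,|A|$ cancels on both sides).

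Now I would carry out the bookkeeping. Writing $\Phi_\phi(R) = r_e e + r_u u$ and $\Phi_\phi(K) = k_e e + k_u u + k_{u^{-1}} u^{-1} + V$ with $V$ supported off $\{e, u, u^{-1}\}$, the coefficient of $e$ in the product $\Phi_\phi(R)\,\Phi_\phi(K)$ picks up the term $r_e k_e$ (from $e\cdot e$), the term $r_u k_{u^{-1}}$ (from $u\cdot u^{-1}$), and possibly contributions from pairing $u$ with a term of $V$ equal to $u^{-1}$ — but $V$ has no $u^{-1}$ term by hypothesis, so $C_e(\Phi_\phi(R)\,\Phi_\phi(K)) = r_e k_e + r_u k_{u^{-1}}$. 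Symmetrically, $\overline{\Phi_\phi(K)} = k_e e + k_{u^{-1}} u + k_u u^{-1} + \overline{V}$, and $\overline{V}$ still has no $u^{-1}$ term (conjugation sends the support of $V$, which avoids $\{e,u,u^{-1}\}$, to a set avoiding $\{e, u^{-1}, u\}$), so $C_e(\Phi_\phi(R)\,\overline{\Phi_\phi(K)}) = r_e k_e + r_u k_u$. Therefore the two $e$-coefficients differ if and only if $r_u k_{u^{-1}} \neq r_u k_u$, i.e.\ (since $r_u$ appears as a multiplier on both sides but we need it nonzero) $k_u \neq k_{u^{-1}}$ provided $r_u \neq 0$; and $r_u \neq 0$ is forced because $r_e = |X|$ means $R$ has colorings beyond the $|X|$ trivial-on-the-base-arc ones only if there are non-trivial colorings, and the hypothesis explicitly posits a nonzero $r_u u$ term (if $r_u = 0$ the statement would be degenerate, so I would note that the hypothesis "any non-trivial coloring contributes $u$" together with $\Phi_\phi(R) = r_e e + r_u u$ is read as $r_u > 0$). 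This closes the chain of equivalences.

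The main obstacle I anticipate is the careful handling of cross terms in the group-ring multiplication: I must be certain that no element of the support of $V$ can multiply against $u$ (or against $e$) to land on $e$, which is exactly why the hypothesis that $V$ omits $u^{-1}$ (and $e$ and $u$) is essential, and I would want to state explicitly that $A$ being abelian means $u\cdot u^{-1} = e$ is the only way to hit $e$ from the $u$-row. A secondary point to get right is verifying that $R\#K$ and $R\#rm(K)$ are genuinely end monochromatic with $X$ so that Lemma~\ref{lem-const} is applicable to $E$; this follows from $X$ faithful via Lemma~\ref{lem-nosaka1} applied directly to the knots $R\#K$ and $R\#rm(K)$, so it is routine but should be cited. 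Everything else is linear algebra over $\Z[A]$ and cancellation of the common positive factor $|X|\,|A|$.
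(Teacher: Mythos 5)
Your proof is correct and follows essentially the same route as the paper's: apply Proposition~\ref{prop-nosaka} (justified by faithfulness via Lemma~\ref{lem-nosaka1}) together with Lemma~\ref{lem-CJKS} to get $C_e(\Phi_\phi(R\#K))=(r_ek_e+r_uk_{u^{-1}})/|X|$ and $C_e(\Phi_\phi(R\#rm(K)))=(r_ek_e+r_uk_u)/|X|$, then convert to coloring counts by Lemma~\ref{lem-const}. Your explicit bookkeeping of the cross terms with $V$ and the remark that $r_u\neq 0$ is needed merely spell out what the paper's terser proof leaves implicit.
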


\begin{proof}
	By Proposition~\ref{prop-nosaka},
	\begin{eqnarray*}
		C_e( \Phi_\phi(R \# K) ) & = & ( r_e k_e + r_u k_{u^{-1}} )/ |X|, \\
		C_e( \Phi_\phi(R \# rm(K) ) ) & = & ( r_e k_e + r_u k_{u} )/|X| .
	\end{eqnarray*}
	By Lemma~\ref{lem-const},   $k_u \neq k_{u^{-1}}$ if and only if 
	\begin{eqnarray*}
		\lefteqn{ \kc{E}{R \# K} = |A| (r_e  k_e + r_u k_{u^{-1}}) /|X| }\\
		& \neq &   |A|( r_e  k_e + r_u k_u) / |X|= \kc{E}{R \# rm(K)} . 
	\end{eqnarray*}
	This completes the proof.
\end{proof}

We note that often  computing the number of colorings  has computational
advantage over applying Corollary~\ref{cor-phimirror} by computing the cocycle
invariant, even though Corollary~\ref{cor-RK} theoretically derives the
condition
\[
\kc{E}{R \# K} \neq  \kc{E}{R \# rm(K)}
\]
from asymmetry of the cocycle invariant in many cases.

\begin{example}\label{ex-69}
	{\rm 
	Let $X=Q(6,2)$ and $\phi$ be a generating $2$-cocycle over $\Z_4$ such that
	the abelian extension of $X$ with respect to $\phi$ is $E=Q(24,2)$.  Let us
	take an example of $R\#K$ and $R\#rm(K)$ for a trefoil $R=3_1$ and $K=6_1$.
	It was found in \cite{CSS} that there is  a multiplicative generator
	$u$ of $\Z_4$ such that the  trefoil has the cocycle invariant $\Phi_{\phi}
	(3_1) = 6+ 24 u$ for $Q(6,2)$. With the same $2$-cocycle, it is computed
	that $\Phi_{\phi} (K) = 6+ 24 u^{-1} $.  By Corollary~\ref{cor-RK},
	$\kc{E}{R \# K} \neq  \kc{E}{R \# rm(K)}$, where $E=Q(24,2)$.  For a more
	complex knot $K$, however, it becomes difficult to compute the cocycle
	invariant, and  easier to confirm  the condition  $\kc{E}{R \# K} \neq
	\kc{E}{R \# rm(K)}$, which then implies  that $K \neq rm(K)$ and  $K$ has
	an asymmetric invariant value.
	}
\end{example}

We summarize outcomes of the methods described in this section, i.e. 
using Corollary~\ref{cor-phimirror} and cocycle invariants, or by directly
computing  
\[
    \kc{E}{R \# K} \neq  \kc{E}{R \# rm(K)}.
\]

First  we summarize our results  for prime knots with 9 crossings or less using the
cocycle invariant.  Among $84$ knots in the table up to 9 crossings, they are
all reversible except: 
\begin{itemize}
	\item
		Fully amphicheiral knots: $4_1, 6_3, 8_3, 8_9, 8_{12}, 8_{18}$.
	\item
		Negative amphicheiral knot:  $8_{17}$.
	\item
		Chiral knots:  $9_{32}, 9_{33}$.
\end{itemize}
The rest are $75$ reversible knots.  The colorings of $ 3_1 \#K$ and $3_1
\#rm(K)$ or the method described in Corollary~\ref{cor-RK} distinguished the
following reversible knots from their mirrors.
\begin{itemize}
	\item
		Using $Q(24, 2)$, the following knots are distinguished from mirrors:\\
		$ 3_1, 6_1, 7_4, 7_7, 8_{11}, $
		$9_1, 9_2, 9_4, 9_6, 9_{10}, 9_{11}, 9_{15}, 9_{17}, 9_{23}, 9_{29},$ \\
		$ 9_{34}, 9_{35},
		9_{37},$ $  9_{38}, 9_{46}, 9_{47}, 9_{48} $.
	\item
		Using $Q(27, 14)$, 
		the following knots are distinguished from mirrors:\\
		$3_1, 6_1, 7_4, 8_5, 8_{15}, 8_{19}, 8_{21},$ 
		$ 9_2, 9_4, 9_{16}, 9_{17}, 9_{28}, 9_{29}, 9_{34}, 9_{38}, 9_{40}$.
\end{itemize}

Furthermore, computer calculations show that the following knots $K$  in the
KnotInfo table up to $12$ crossings with braid index less that $4$  have the
property $\kc{E}{3_1\# K} \neq \kc{E}{3_1\# m(K)}$.
\begin{itemize}
	\item
		Both $E=Q(24,2)$ and $Q(27,14)$ have this property for:\\
		$10_5, 10_9, 10_{112}, 10_{159}, 12a_{0805}, 12a_{0878}, 12a_{1210}, 12a_{1248}$,\\
		$12a_{1283}, 12n_{0571}, 12n_{0666}, 12n_{0750}, 12n_{0751}$. 

	\item 
		Only $E=Q(24,2)$ but not  $Q(27,14)$ has this property for:\\
		$11a_{355}, 12a_{1214}, 12n_{0574}, 12n_{0882}$.

	\item
		Only $E=Q(27,14)$ but not  $Q(24,2)$ has this property for:\\ $10_{64},
		10_{139}, 10_{141}, 11a_{338}, 12a_{1212}, 12n_{0604}, 12n_{0850}$.
\end{itemize}

\begin{sloppypar}
	\begin{remark}
		{\rm
		To distinguish more knots from their mirrors using  the property  $\kc{E}{R \#
		K} \neq \kc{E}{R \# m(K)}$ for some abelian extensions $E$ and for some $R$, we
		further computed abelian extensions of some Rig quandles. We computed
		cohomology groups for some coefficient groups and found some $2$-cocycles for
		Rig quandles up to order 23, and obtained $40$ abelian extensions. This
		information is available online at 
        %\url{http://code.google.com/p/rig/w/list}.
        \url{http://github.com/vendramin/rig/wiki}.

		Let ${\cal E}$ be this set of quandles.  It is likely that there are other
		abelian extensions that are not in this list.

		There are 168  chiral, reversible or positive amphicheiral knots with braid
		index less than  4 and crossing number at most 12.  Of these, we computed that 144
		of %these 
		knots have the property $\kc{E}{R \# K} \neq \kc{E}{R \# m(K)}$ with
		$E \in {\cal E}$ and for $R=3_1, 5_1,$ or $9_1$.
		}
	\end{remark}
\end{sloppypar}

\begin{remark}
	{\rm
	Reversible prime knots $K$, up to 12 crossings with  braid index less than 4,
	distinguished from their mirror images by a quandle knot pair $(X,R)$ are
	listed in Table \ref{tab:reversible}. The table shows a quandle $X$, a knot $R$
	and knots $K$ such that $\kc{X}{R\#K} \neq \kc{X}{R\#m(K)}$.  We recall  that
	$Q(24,2)$ and $Q(27,14)$ are also abelian extensions.

	\begin{table}[h]
		%\label{tab:reversible}
		\caption{Some reversible prime knots $K$ distinguished
		from their mirror images by a quandle knot pair $(X,R)$.}\label{tab:reversible}
		\begin{tabular}{|c|c|c|}
			\hline
			$X$ & $R$ & $K$\\
			\hline
			$E(Q(12,3),\Z_5)$ & $3_1$ & $12n_{0472}$ \\ 
			\hline
			$E(Q(12,3),\Z_5)$ & $9_1$ & $10_{46}, 10_{127}, 10_{155}, 12n_{0466}$\\ 
			\hline
			$E(Q(12,3),\Z_{10})$ & $3_1$ & $8_{7}, 8_{10}, 10_{116}, 10_{143}, 12a_{0576}$\\
			& & $12a_{1220},  12n_{0233}, 12n_{0234}, 12n_{0235}$\\
			& & $12n_{0570}, 12n_{0722},  12n_{0830}, 12n_{0887}$\\
			\hline
			$E(Q(12,3),\Z_{10})$ & $9_1$ & $5_{2}, 10_{2}, 10_{100}, 10_{125}, 10_{152}, 11a_{240}$  \\
			& & $12a_{0835}, 12a_{1203},  12a_{1222},12n_{0242}, 12n_{0467}$\\ 
			\hline
			$E(Q(15,5),\Z_5)$ & $3_1$ & $12n_{0888}$\\ 
			\hline
			$E(Q(18,11),\Z_6)$ & $3_1$ & $8_{20}$, $10_{62}$, $12a_{0999}$, $12n_{0831}$ \\ 
			\hline 
			$E(Q(20,1),\Z_3)$ & $5_1$ & $12a_{1027}, 12a_{1233}, 12n_{0468}, 12n_{0721}$ \\ 
			\hline
			$E(Q(20,1),\Z_6)$ & $5_1$ & $5_{1}, 10_{47}, 10_{48}, 10_{157}, 11a_{234}$ \\
			& & $12a_{0869}, 12a_{1114}, 12a_{1176},  12a_{1199}$ \\ 
			\hline
			$E(Q(20,2),\Z_3)$ & $5_1$ & $7_{3}, 12a_{0146}, 12a_{0369}, 12a_{0722}, 12n_{0822}$ \\ 
			\hline
			$E(Q(20,2),\Z_6)$ & $5_1$ & $6_{2}, 8_{16}, 9_{3}, 10_{126}, 10_{161}, 12a_{0838}, 12a_{1246}, 12a_{1250}$ \\ 
			& & $12n_{0417}, 12n_{0725}, 12n_{0749}, 12n_{0820}, 12n_{0829}$ \\ 
			\hline
			$Q(24,2)$ & $3_1$ & $3_{1}, 9_{1}, 9_{6}, 10_{5}, 10_{9}, 10_{112}, 10_{159}, 11a_{355}$ \\
			& & $12a_{0805},             12a_{0878}, 12a_{1210}, 12a_{1214}, 12a_{1248}, 12a_{1283}$  \\
			& & $12n_{0571}, 12n_{0574}, 12n_{0666}, 12n_{0750}, 12n_{0751}, 12n_{0882}$ \\ 
			\hline
			$Q(27,14)$ & $3_1$ & $8_{5}, 8_{19}, 8_{21}, 9_{16}, 10_{64}, 10_{139}, 10_{141}, 11a_{338}$\\
			& & $12a_{1212},12n_{0604}, 12n_{0850}$ \\
			\hline 
			$E(Q(30,3),\Z_4)$ & $3_1$ & $12n_{0821}$\\
			\hline
		\end{tabular}
	\end{table}
	}
\end{remark}

\begin{remark}
	{\rm
	Chiral prime knots $K$,  up to 12 crossings with braid index less than 4,
	distinguished from $rm(K)$ by a quandle knot pair $(X,R)$ are listed in
	Table \ref{tab:chiral}.  The table shows a quandle $X$, a knot $R$ and
	knots $K$ such that such that $\kc{X}{R\#K} \neq  \kc{X}{R\#rm(K)}$.

	\begin{table}
		%\label{tab:chiral}
		\caption{Some chiral prime knots $K$ distinguished
		from $rm(K)$ by a quandle knot pair $(X,R)$.} \label{tab:chiral}
		\begin{tabular}{|c|c|c|}
			\hline
			$X$ & $R$ & $K$\\
			\hline
			$E(Q(12,3),\Z_5)$ & $3_1$ & $10_{149}, 12n_{0344}, 12n_{0679}, 12n_{0688}$ \\ 
			\hline 
			$E(Q(12,3),\Z_{10})$ & $3_1$ & $12a_{0815}, 12a_{0898}, 12a_{0981}, 12n_{0708}$\\ 
			\hline 
			$E(Q(12,3),\Z_{10})$ & $9_1$ & $12a_{1223}, 12n_{0748}$\\ 
			\hline
			$E(Q(18,11),\Z_6)$ & $3_1$ & $10_{82}, 12a_{1191}, 12a_{1215}, 12a_{1253}, 12n_{0675}$ \\ 
			\hline
			$E(Q(20,1),\Z_3)$ & $5_1$ & $10_{148}, 12a_{1047}, 12a_{1227}$ \\ 
			\hline
			$E(Q(20,1),\Z_6)$ & $5_1$ & $12a_{0824}, 12a_{0850}, 12a_{0859}, 12n_{0113}, 12n_{0114}, 12n_{0345}$ \\ 
			\hline 
			$E(Q(20,2),\Z_3)$ & $5_1$ & $12a_{1227}, 12a_{1235}, 12a_{1258}$ \\ 
			\hline
			$E(Q(20,2),\Z_6)$ & $5_1$ & $12a_{0920}, 12n_{0709}$ \\ 
			\hline
			$Q(24,2)$ & $3_1$ & $10_{106}, 12a_{0909}, 12a_{0916}, 12a_{1002}, 12a_{1120}, 12a_{1226}$ \\ 
			& & $12a_{1255}, 12n_{0640}, 12n_{0767}$ \\ 
			\hline
			$Q(27,14)$ & $3_1$ & $10_{85}, 12a_{0864}, 12a_{1219}, 12a_{1221}, 12n_{0674}$ \\
			\hline
			$E(Q(30,3),\Z_4)$ & $3_1$ & $12a_{1011}, 12a_{1051}, 12n_{0191}, 12n_{0684}$\\
			\hline
		\end{tabular}
	\end{table}
	}
\end{remark}

\section{Recovering cocycle invariants from colorings}\label{sec-recover}

In this section, we obtain formulas for computing the cocycle invariant from
the number of colorings
for some cases.  The formulas give computational advantage in many
cases. To obtain formulas, however, one needs information on concrete
non-trivial invariant values for a few knots.

\begin{proposition}\label{prop-invcoeff}
	Let $X$, $A$, $\phi$ be as above.  Suppose that $X$ is end monochromatic
	with $K$.  Suppose further that for an element $v \in A$ that is not the
	identity element $e$, there exists a knot $R_v$ such that $\Phi_\phi(R_v)=
	r_e e + r_v v \in \Z[A]$.  Then $$ C_{ v^{-1} } ( \Phi_\phi(K) )=
	\frac{1}{r_v |A|} ( \  |X|  \kc{E}{ R_v \# K } - r_e\,  \kc{E}{ K } \ ) .
	$$
\end{proposition}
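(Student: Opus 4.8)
The plan is to compute both $\kc{E}{R_v \# K}$ and $\kc{E}{K}$ in terms of cocycle invariant coefficients and then solve for $C_{v^{-1}}(\Phi_\phi(K))$. First I would apply Proposition~\ref{prop-nosaka} to the connected sum $R_v \# K$: since $X$ is end monochromatic with $K$ and $X$ is homogeneous, we get $|X|\,\Phi_\phi(R_v \# K) = \Phi_\phi(R_v)\,\Phi_\phi(K)$ in $\Z[A]$. Writing $\Phi_\phi(R_v) = r_e e + r_v v$ and $\Phi_\phi(K) = \sum_{g\in A} C_g(\Phi_\phi(K))\, g$, I would expand the product and extract the coefficient of the identity $e$. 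The term $r_e e \cdot \sum_g C_g g$ contributes $r_e\, C_e(\Phi_\phi(K))$ to the coefficient of $e$, while $r_v v \cdot \sum_g C_g g$ contributes $r_v\, C_{v^{-1}}(\Phi_\phi(K))$ (the coefficient of $e$ in $v\cdot g$ is nonzero exactly when $g = v^{-1}$). Hence
\[
|X|\, C_e(\Phi_\phi(R_v \# K)) = r_e\, C_e(\Phi_\phi(K)) + r_v\, C_{v^{-1}}(\Phi_\phi(K)).
\]

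Next I would bring in Lemma~\ref{lem-const}. That lemma says that if a knot $J$ is end monochromatic with $X$ and $E = E(X,A,\phi)$, then $\kc{E}{J} = C_e(\Phi_\phi(J))\,|A|$. I need this for $J = K$ and for $J = R_v \# K$. For $K$ this is immediate from the hypothesis. For $R_v \# K$ I would note that being end monochromatic is preserved under connected sum when one factor is end monochromatic: indeed, in the bijection argument underlying Lemma~\ref{lem-nosaka2} (or directly, since a coloring of $R_v \# K$ by $X$ restricts to colorings of the $1$-tangles of both factors, and the end-monochromatic factor forces the two connecting arcs to share a color, which then propagates to both ends of the composite tangle), the composite $1$-tangle inherits the end-monochromatic property from $K$. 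So Lemma~\ref{lem-const} applies to $R_v \# K$ as well, giving $\kc{E}{R_v \# K} = C_e(\Phi_\phi(R_v \# K))\,|A|$ and $\kc{E}{K} = C_e(\Phi_\phi(K))\,|A|$.

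Finally I would substitute: $C_e(\Phi_\phi(R_v \# K)) = \kc{E}{R_v \# K}/|A|$ and $C_e(\Phi_\phi(K)) = \kc{E}{K}/|A|$ into the displayed identity above, obtaining
\[
\frac{|X|}{|A|}\,\kc{E}{R_v \# K} = \frac{r_e}{|A|}\,\kc{E}{K} + r_v\, C_{v^{-1}}(\Phi_\phi(K)),
\]
and solving for $C_{v^{-1}}(\Phi_\phi(K))$ yields exactly the claimed formula $C_{v^{-1}}(\Phi_\phi(K)) = \frac{1}{r_v|A|}\bigl(|X|\,\kc{E}{R_v \# K} - r_e\,\kc{E}{K}\bigr)$. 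The only real subtlety — and the step I would write out most carefully — is verifying that $R_v \# K$ is end monochromatic with $X$ so that Lemma~\ref{lem-const} legitimately applies to it; everything else is bookkeeping on coefficients in the group ring $\Z[A]$. (One should also note $r_v \neq 0$, which is implicit since $v$ genuinely appears in $\Phi_\phi(R_v)$, so the division is legitimate.)
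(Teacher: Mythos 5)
Your overall route coincides with the paper's: apply Proposition~\ref{prop-nosaka} to $R_v\#K$, read off the coefficient of the identity, convert to coloring numbers via Lemma~\ref{lem-const}, and solve for $C_{v^{-1}}(\Phi_\phi(K))$. The one place you go beyond the paper is the claim you yourself single out as the main subtlety, and that claim is false as stated: end monochromaticity of $R_v\#K$ is \emph{not} inherited from the factor $K$ alone. Put the base point of the composite on the closure arc, so its $1$-tangle is the concatenation of the $R_v$-tangle and the $K$-tangle. A coloring of this composite tangle only identifies the terminal color of the $R_v$-tangle with the initial color of the $K$-tangle; end monochromaticity of $K$ then forces the terminal end of the composite to agree with the terminal end of the $R_v$-tangle, but nothing constrains the initial end of the $R_v$-tangle. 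Any coloring of the $R_v$-tangle with distinct end colors extends to the composite tangle (color the $K$-tangle constantly by the terminal color), so $R_v\#K$ fails to be end monochromatic whenever $R_v$ does --- for instance $X=Q(8,1)$, $R_v=3_1$, $K=5_1$, using Remark~\ref{rem-endmonoex}. Your ``propagates to both ends'' step is really the argument inside Lemma~\ref{lem-nosaka2}, which concerns colorings of the closed composite knot, where the closure arc supplies the second identification; it does not apply to tangle colorings, which is what end monochromaticity is about.

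This flaw does not sink the formula, because the identity $\kc{E}{J}=C_e(\Phi_\phi(J))\,|A|$ of Lemma~\ref{lem-const} in fact holds for every knot $J$: an $X$-coloring of a knot diagram lifts to $E$ by propagating the $A$-coordinate along the single strand, the number of lifts being $|A|$ or $0$ according to whether the total weight $\prod_\tau \phi(x_\tau,y_\tau)^{\epsilon(\tau)}$ is trivial, and end monochromaticity plays no role in that count; accordingly the paper's own proof applies Lemma~\ref{lem-const} to $R_v\#K$ with no such verification. So either drop the preservation claim and invoke this observation, or strengthen the hypotheses (e.g. $X$ faithful, as in the paper's applications, or $R_v$ itself end monochromatic); but do not assert that one end monochromatic factor suffices for the composite.
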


\begin{proof}
	By Proposition~\ref{prop-nosaka}, we have $|X| \Phi_\phi(R_v \# K) =
	\Phi_\phi (R_v) \Phi_\phi (K)$.  By assumption $ \Phi_\phi (R_v) \Phi_\phi
	(K) = (r_e e + r_v v  ) (\sum_{u \in A} a_u u )$.  The coefficient of the
	identity element in the left-hand side is $r_e a_e + r_v a_{v^{-1}}$.
	Hence we obtain $|X| C_e( \Phi_\phi(R_v \# K) ) = r_e a_e + r_v a_{v^{-1}}
	$.  Let $E$ be the abelian extension of $X$ with respect to $\phi$.  Then
	by Lemma~\ref{lem-const}, we have
    \[
        \kc{E}{ R_v \# K) } = C_e( \Phi_\phi(R_v \# K) ) |A|
    \]
	and $\kc{E}{ K }= a_e |A|$.  By substitution and solving for $a_{v^{-1}}$,
	we obtain the lemma.
\end{proof}

In the following examples, we focus on the Rig quandles of order up to 12 where
the second cohomology group is non-trivial when the coefficient group is  other
than  $\Z_2$.  When the coefficient group $A$ is cyclic of order $n$, even
though we write $A=\Z_n$ (a notation usually used for the additive group of
integers modulo $n$), we specify a multiplicative generator $u$, so that
$A=\langle u \rangle$ where $u$ has order $n$, and write $A$ multiplicatively.

\begin{example}\label{ex-62}
	{\rm
	Let $X=Q(6,2)$ and $\phi$ be a generating $2$-cocycle over $A=\Z_4$ such that
	the abelian extension of $X$ with respect to $\phi$ is $E=Q(24,2)$.  Since $X$
	is faithful, any knot is end monochromatic with $X$. 

	The cocycle invariants of  $X=Q(6,2)$ using this cocycle are given in
	the wiki page of Rig at 
	\url{http://github.com/vendramin/rig/wiki}, % \url{http://code.google.com/p/rig/w/list}, 
	for knots up to
	10 crossings.  Some of the results are shown in Table~\ref{tab:Q(6,2)}.  
	Knots that are not listed have the trivial invariant value $6$.
	We abbreviate the identity element in the remaining of the paper. For example,
	$6+24u$ means $6e + 24u$ for the identity element $e$.
	In particular, in order to use Proposition~\ref{prop-invcoeff}, we obtain the following invariant values:
	\begin{align*}
		\Phi_{\phi} (3_1) = 6+ 24 u, &&
		\Phi_\phi(8_5)=30 + 24 u^2, &&  
		\Phi_\phi(9_1)=6 + 24 u^3.
	\end{align*}
	Proposition~\ref{prop-invcoeff} implies that
	\begin{eqnarray*}
		C_{ u } ( \Phi_\phi(K) )&=& (\ 1/ (24 \cdot 4 ) \ ) \ ( \  6 \cdot  \kc{E}{  9_1 \# K } - 6 \cdot \kc{E}{ K } \ ) , \\
		C_{ u^2 } ( \Phi_\phi(K) )&=& (\ 1/ (24 \cdot 4 ) \ ) \ ( \  6 \cdot    \kc{E}{  8_5 \# K } -  30 \cdot  \kc{E}{ K } \ ),  \\
		C_{ u^3 } ( \Phi_\phi(K) )&=& (\ 1/ (24 \cdot 4 ) \ ) \ ( \  6 \cdot  \kc{E}{  3_1 \# K } - 6 \cdot \kc{E}{ K } \ ) .
	\end{eqnarray*}
	We also have 
	$$C_{ e } ( \Phi_\phi(K) ) = (1/|A|)  \kc{E}{K}=(1/4) \kc{E}{K}$$
	from Lemma~\ref{lem-const}.
	Therefore we obtain 
	\begin{eqnarray*}
		\lefteqn{\Phi_\phi(K) =
		\frac{1}{16} \left[ \right.
		4\,  \kc{E}{ K } + (   \kc{E}{  9_1 \# K } - \kc{E}{ K } ) u } \\
		& & \quad \quad  +\  ( \kc{E}{  8_5 \# K } -  5\, \kc{E}{ K })u^2 +  ( \kc{E}{  3_1 \# K } -  \kc{E}{ K }) u^3 \
		\left. \right].
	\end{eqnarray*}
	See the appendix for examples of cocycles invariants computed using this formula.

	\begin{table}
		\caption{Some cocycle invariants for the quandle $Q(6,2)$.}\label{tab:Q(6,2)}
		\begin{tabular}{|c|c|}
			\hline 
			cocycle invariant & knot\\
			\hline
			$6 + 24 u$ &  $3_1, 7_7, 8_{11}, 9_2, 9_4, 9_{10},  9_{11}, 9_{15}$  \\
			\hline
			$54 + 72 u$ & $9_{35}$ \\
			\hline
			$6 + 48 u^2$ &  $9_{40}$ \\
			\hline
			$30 + 24 u^2$  &    $8_5, 8_{10}, 8_{15},  8_{19}, 8_{20}, 8_{21},  9_{16},  9_{24}, 9_{28}$ \\
			\hline
			$6 + 24 u^3$ &    $6_1, 7_4, 9_1, 9_6, 9_{17}, 9_{23},  9_{29},  9_{34}, 9_{38}$   \\
			\hline
			$6 + 72 u + 48 u^2$ &  $9_{48}$ \\
			\hline
			$6+ 48 u + 48 u^3$ &  $8_{18}$ \\
			\hline
			$6+ 24u + 72 u^3$ &  $9_{47}$ \\
			\hline
			$54 + 24 u + 48 u^3$ &  $9_{46}$ \\
			\hline
			$6+ 48 u + 48 u^2 + 24 u^3$ &  $9_{37}$\\
			\hline
		\end{tabular}
	\end{table}
	}
\end{example}

\begin{remark}
{\rm 
    In computing the coloring numbers of knots by quandles, some computational
    techniques have been developed in \cite{CESY}, such as fixing a color of the
    first braid strand to reduce the computation time. On the other hand, to
    compute the cocycle invariant, every coloring must be computed, and the cocycle
    value must be evaluated for each coloring.  The latter increases the
    computational time significantly.  Thus the formula of
    Proposition~\ref{prop-invcoeff} is useful in determining invariant values for
    higher crossing knots with  lower braid indices.  
}
\end{remark}

\begin{remark}
	{\rm
	There are discrepancies of representatives of knots and their mirrors in
	different notations in \cite{KI} for the following knots up to 9 crossings:
	$7_7$, $9_{11}$, $9_{17}$,  $9_{34}$, $9_{46}$, $9_{47}$, $9_{48}$.
	Specifically, the diagram of $7_7$ listed agrees with  the braid notation,
	but its PD notation seems to represent its mirror.  In our first
	computation up to 9 crossings, we used the PD notation in \cite{KI}, and
	the second computations for those with braid index less than 4 are
	performed using the braid notation.  For up to 9 crossings, these
	calculations showed discrepancies for the above listed knots.  The
	discrepancies are all related by conjugate values of the invariant.  We
	note that in the following computations, these knots are not used for $R$
	in $ \kc{E}{  R \# K } $ in the formulas.
	}
\end{remark}

Below we give a summary of the formula in Proposition~\ref{prop-invcoeff} for
Rig quandles of order up to 12, as examples  to indicate how to use the
formula, and to illustrate varieties of actual formulas obtained.

\begin{example}
	{\rm
	Let $X=Q(9,6)= \Z_3 [t] / (t^2 + 2t +1) $ and $\phi$ be a generating
	$2$-cocycle over $A=\Z_3$ such that the abelian extension of $X$ with
	respect to $\phi$ is $E=Q(27,14)$.  Since $X$ is faithful, any knot is end
	monochromatic with $X$.  Computer calculation shows that $\Phi_{\phi} (3_1)
	= 27 + 54 u$, where $u$ is a multiplicative generator of $A$ and it also
	implies that $\Phi_{\phi} (m(3_1)) = 27 + 54 u^2$.
	Proposition~\ref{prop-invcoeff} implies that
	\begin{eqnarray*}
		C_{ u } ( \Phi_\phi(K) )&=& (\ 1/ (54 \cdot 3 ) \ ) \ ( \  9 \cdot  \kc{E}{  m(3_1) \# K } - 27 \cdot \kc{E}{ K } \ ) , \\
		&=& (\ 1/ 18  \ ) \ ( \   \kc{E}{  m(3_1) \# K } - 3 \cdot \kc{E}{ K } \ ) , \\
		C_{ u^2 } ( \Phi_\phi(K) )&=& (\ 1/ 18\ ) \ ( \     \kc{E}{  3_1 \# K } -  9 \cdot  \kc{E}{ K } \ ).
	\end{eqnarray*}
	}
\end{example}

\begin{example}
	{\rm
	Let $X=Q(12,3)$. This quandle is  not Alexander, not kei, not Latin,
	faithful, and $H^2_Q(X, A)=\Z_{10}$ for $A=\Z_{10}$.  Let $E$ be the
	abelian extension corresponding to a cocycle that represents a generator of
	$\Z_{10}$.  We obtain the following invariant values:
	\begin{align*}
		&\Phi_\phi( 3_1)= 12 + 60 u, && 
		\Phi_\phi( 8_{19} )=12 + 60 u^2 ,&&
		\Phi_\phi( 5_2)= 12 + 60 u^3,&&\\
		&\Phi_\phi( m(9_{29}))= 12 + 60 u^6, &&
		\Phi_\phi( 5_1)= 12 + 60 u^5,&&
		\Phi_\phi( 9_{29})= 12 + 60 u^6,&&\\
		&\Phi_\phi( 5_2)= 12 + 60 u^7,&&
		\Phi_\phi( m(8_{19}) )=12 + 60 u^8 ,&&
		\Phi_\phi( 8_6)= 12 + 60 u^9.
	\end{align*}
	One computes
	\begin{eqnarray*}
		C_{ u} ( \Phi_\phi(K) ) &=&  (\ 1/ (60 \cdot 12 ) \ ) \ ( \  12 \cdot  \kc{E}{  8_6 \# K } - 12 \cdot \kc{E}{ K } \ ) \\
		&=& ( 1/ 60    ) \ ( \  \kc{E}{  8_6 \# K) } -  \kc{E}{ K } \ ) 
	\end{eqnarray*}
	and the other terms are similar with the corresponding knots listed above.
	We note that the coefficient of every term is computed by these formulas,
	but we needed to compute the invariant for up to 9 crossings for this
	conclusion, as $u^4$ and $u^6$ are missing up to 8 crossing knots.
	}
\end{example}

\begin{example}
	\label{exa:Q(12,5)}
	{\rm
	Let $X=Q(12,5)$. This quandle is not Alexander, not kei, not Latin,  faithful,
	and $H^2_Q(X,\Z_4)=\Z_4$. 
	With a choice of a generating cocycle $\phi$, 
	up to $8$ crossings, all knots have the cocycle invariant 
	of the form $\Phi_\phi(K)=a+b u^2$, $a, b \in \Z$.
	Thus we conjecture that this is the case for all knots.
	The trefoil has the invariant value 
	$\Phi_\phi(3_1)=12 + 96 u^2$.
	Hence we obtain
	\begin{eqnarray*}
		C_{ u^2 } ( \Phi_\phi(K) ) &=&  (\ 1/ (96 \cdot 4 ) \ ) \ ( \  12 \cdot    \kc{E}{  3_1 \# K } -  12 \cdot  \kc{E}{ K } \ )\\
		&=&(1/ 32 ) \ ( \   \kc{E}{  3_1 \# K } -   \kc{E}{ K } \ ) . 
	\end{eqnarray*}
	If the conjecture does not hold and a knot with the term $u$ or $u^3$ is found, then 
	it can be used to evaluate other terms.
	}
\end{example}

\begin{example} \label{exa:Q(12,6)}
	{\rm
	Let $X=Q(12,6)$. This quandle is not Alexander, not kei, not Latin,
	faithful, and $H^2_Q(X,\Z_4)=\Z_4$.  With a generating $2$-cocycle $\phi$
	of $\Z_4$ the invariant values $\Phi_\phi(K)$ for $K$ up to 9 crossing
	knots are listed in Table \ref{tab:Q(12,6)}. 
	\begin{table}
		\caption{Some cocycle invariants for $Q(12,6)$.}
		\begin{tabular}{|c|c|}
			\hline
			cocycle invariant & knot\\
			\hline
			$108$ & $3_1, 6_1, 7_4, 7_7, 8_{11}$ \\
			& $9_1, 9_2, 9_4, 9_6, 9_{10}, 9_{11}, 9_{15}, 9_{17}, 9_{23}, 9_{29}, 9_{34}, 9_{38}$ \\
			\hline
			$204$ & $8_5, 8_{10}, 8_{15}, 8_{19}, 8_{20}, 8_{21}, 9_{16}, 9_{24}, 9_{28}$ \\
			\hline
			$396$ & $8_{18}, 9_{40}, 9_{47}$ \\
			\hline
			$492 + 192 u^2$ & $9_{35}, 9_{37}, 9_{46}, 9_{48}$\\ 
			\hline
			$12$ & otherwise \\
			\hline
		\end{tabular}
		\label{tab:Q(12,6)}
	\end{table}
	Thus we conjecture that the invariant values are of the form 
	\[
	\Phi_\phi(K)=a+b u^2, 
	\]
	for $a,b \in \Z$ and for all knots $K$. One computes 
	\begin{eqnarray*}
		C_{ u^2 } ( \Phi_\phi(K) ) &=&  (\ 1/ ( 492 \cdot 4 ) \ ) \ ( \  12 \cdot    \kc{E}{  9_{35} \# K } -  12 \cdot  \kc{E}{ K } \ )\\
		&=&(1/ 164 ) \ ( \   \kc{E}{  9_{35} \# K } -   41 \cdot \kc{E}{ K } \ ) .
	\end{eqnarray*}
	We note that we needed to compute the invariant for knots up to 9 crossing
	to obtain this formula. 
	}
\end{example}

\begin{remark}
	{\rm
	The second cohomology groups for $Q(12, 7)$, $Q(12, 9)$ with coefficient group $\Z_4$
	are
	$\Z_2 \times \Z_4$ and $\Z_4 \times \Z_4$, respectively, and for choices of generating cocycles,
	the cocycle invariants are non-trivial. 
	Situations and computations  are similar to those for $Q(6,2)$ and $Q(12, 5)$
	for each factor, for up to 7 crossings.
	}
\end{remark} 

\begin{example}
	\label{exa:Q(12,10)} 
	{\rm
	Let $X=Q(12,10)$. This quandle  is not Alexander, not kei, not Latin,
	faithful, and $H^2_Q(X,\Z_6)=\Z_6$.  With a generating cocycle $\phi$ of $\Z_6$, we
	obtain 
	\begin{align*}
		\Phi_\phi(3_1)=12 + 108 u^3, &&
		\Phi_\phi(8_5)=120 + 216 u^2,&& % Choice made: (1,3,5)(2,4,6)=u^2
		\Phi_\phi(8_{15})=120 + 216 u^4.
	\end{align*}

	Since we observed, up to 8 crossings,  one or more of the terms with $u^2, u^3$ and $u^4$
	(and no terms of $u$ or  $u^5$), 
	we conjecture that it is the case for all knots.
	One computes
	\begin{eqnarray*}
		C_{ u^2} ( \Phi_\phi(K) )&=& (\ 1/ (108 \cdot 6 ) \ ) \ ( \  12 \cdot  \kc{E}{  3_1 \# K) } - 12 \cdot \kc{E}{ K } \ ) , \\
		&=& (\ 1/ 54 \ ) \ ( \  \kc{E}{  3_1 \# K) } - \kc{E}{ K } \ ) ,\\
		C_{ u^3 } ( \Phi_\phi(K) )&=& (\ 1/ (216 \cdot 6 ) \ ) \ ( \  12 \cdot    \kc{E}{  8_{15}  \# K) } -  120 \cdot  \kc{E}{ K } \ ),  \\
		&=& 
		(\ 1/ 108 \ ) \ ( \    \kc{E}{  8_{15}  \# K) } -  12 \cdot  \kc{E}{ K } \ ),  \\
		C_{ u^4 } ( \Phi_\phi(K) )&=& (\ 1/ 108 \ ) \ ( \   \kc{E}{  8_5 \# K) } - 12 \cdot \kc{E}{ K } \ ) .
	\end{eqnarray*}
	}
\end{example}

\begin{remark}
	{\rm
	The $2$-cocycle invariants discussed in this section are derived from the
	following invariant: Let $R_1, \ldots, R_n$ be knots and $X_1, \ldots, X_m$
	be finite quandles.  Then an invariant is defined for a knot $K$ by $${\rm
	CL}_{X_1, \ldots , X_m, R_1, \ldots, R_n}(K) = \left[ \  \kc{X_i}{R_j\#K} \
	\right]_{i=1, \ldots, m,\,  j=1, \ldots, n} .$$ It is, then, a natural
	question whether for any quandle $2$-cocycle invariant $\Phi_\phi(K)$,
	there is a sequence of knots $R_1, \ldots, R_n$  and quandles $X_1, \ldots,
	X_m$ such that $\Phi_\phi(K)$ is derived from ${\rm CL}_{X_1, \ldots , X_m,
	R_1, \ldots, R_n}(K) $.
	}
\end{remark}

\section{Properties of abelian extensions}\label{sec-ext}

Finding abelian extensions have, for example, the following applications:
%(1) we can obtain  explicit $2$-cocycles,
(1) non-triviality of the second cohomology group can be confirmed,
(2) knots and their mirrors may be distinguished by colorings of composite knots as in Section~\ref{sec-rev}, 
(3) they are useful in computing cocycle knot invariants  via colorings as in Section~\ref{sec-recover}.

%We summarize our findings on extensions of 
% Rig quandles 
%in this section.  There
%are 35 non-faithful  Rig quandles.
%All but 5 are extensions by $\Z_2$.
We summarize our findings on extensions of Rig quandles in this section. Among
the $790$ Rig quandles of order $<48$ there are 66 non-faithful quandles. All but $8$ 
are extensions by $\Z_2$.

\begin{proposition}\label{prop-30-4}
    Among the non-faithful  Rig quandles (of order less than 48), 
    $Q(30, 4)$, $Q(36, 58)$, and $Q(45, 29)$ are  the only quandles that are not  abelian extensions. 
\end{proposition}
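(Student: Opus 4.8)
The plan is to recall the structural criterion distinguishing abelian extensions among all non-faithful quandles, and then reduce the statement to a finite check on the $66$ non-faithful Rig quandles of order less than $48$. A connected quandle $E$ is an abelian extension $E(X,A,\phi)$ precisely when the canonical surjection $\pi\colon E\to X$ onto the quandle $X = E/{\sim}$, where $\sim$ identifies elements in the same fiber of the congruence generated by non-faithfulness, has all fibers of the same size $|A|$, that congruence is "central" in the appropriate sense, and—crucially—the induced extension splits as a direct product on each fiber with an \emph{abelian} group $A$ acting by translations; equivalently, the associated dynamical cocycle $\beta\colon X\times X\to\operatorname{Sym}(S)$ from the non-abelian extension picture takes values in the image of a fixed regular abelian subgroup, so that $\beta_{x,y}$ is translation by $\phi(x,y)$ for some $\phi\colon X\times X\to A$. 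So the first step is to make precise, citing \cite{AG,CENS}, the dichotomy: every non-faithful connected quandle $E$ is a non-abelian extension $X\times_\beta S$ of the faithful quotient $X=E/\!\!\approx$ by a dynamical cocycle $\beta$, and $E$ is an \emph{abelian} extension iff $\beta$ can be conjugated into the left-regular representation of an abelian group on $S$.

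Next I would carry out the reduction. Run over the $66$ non-faithful Rig quandles of order $<48$; for each, compute the faithfulization quotient $X = E/\!\!\approx$ (the image of $E$ in $\operatorname{Inn}(E)$ under $a\mapsto\mathcal R_a$, with its induced quandle structure) together with the fiber size $s=|E|/|X|$ and the dynamical cocycle $\beta$. For the $58$ that are extensions by $\Z_2$ there is nothing to prove. For the remaining $8$ quandles with $s>2$, one checks whether the subgroup of $\operatorname{Sym}(S)$ generated by $\{\beta_{x,y}\}$—after conjugation—can be taken abelian and acting regularly; equivalently, whether $\beta$ is cohomologous (in the dynamical/constant-cocycle sense) to one valued in translations of an abelian group of order $s$. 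This is a finite, mechanical computation in \GAP\ using the Rig package: for $5$ of the $8$ it succeeds, exhibiting $\phi$ explicitly, and for $Q(30,4)$, $Q(36,58)$, $Q(45,29)$ it fails. I would present the three failures as the content of the proposition and record the successful $\phi$'s (or a pointer to the Wiki) for the other five.

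The main obstacle is proving the \emph{negative} part cleanly rather than by a raw computer search: showing that $Q(30,4)$, $Q(36,58)$, $Q(45,29)$ admit \emph{no} presentation as an abelian extension, for \emph{any} abelian group $A$ and \emph{any} cocycle $\phi$, and over \emph{any} choice of quotient. The quotient is canonical (it must be the faithfulization), so $|A|=s$ is forced; what remains is to rule out all abelian-group structures on the fiber. For $Q(30,4)$ the fiber has size $5$ and the only abelian group is $\Z_5$, so one must show $\beta$ is not conjugate to a $\Z_5$-translation cocycle—this follows by examining the cycle structure of the permutations $\beta_{x,y}$ (if some $\beta_{x,y}$ is not a $5$-cycle or the identity, no $\Z_5$-regular form is possible) and, if they do have the right cycle type, checking the obstruction class in the relevant non-abelian cohomology is non-trivial. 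The cases $Q(36,58)$ (fiber size $6$, candidate groups $\Z_6$ and $\Z_2\times\Z_3\cong\Z_6$) and $Q(45,29)$ (fiber size $5$, only $\Z_5$, or possibly $9$ with $\Z_9$ vs.\ $\Z_3\times\Z_3$, depending on the quotient order) are analogous but require handling each abelian group of that order. I would organize this as: (i) pin down $s$ and the candidate abelian groups; (ii) rule out each by a conjugacy/cycle-type argument on $\{\beta_{x,y}\}$ or by a direct obstruction-cocycle computation; (iii) note that \GAP\ verifies all of this and that Rig's {\sf SmallQuandle} data make the $\beta$'s explicit.
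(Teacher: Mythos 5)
The structural gap is in your negative part. You assert that the base of any putative abelian extension ``must be the faithfulization,'' so that the coefficient group's order is forced, and you then test only that single quotient. This is unjustified: the base $X$ of an abelian extension $E(X,A,\phi)$ need not be the image of $E$ in ${\rm Inn}(E)$; it only has to be some quotient of $E$ (through which the map to ${\rm Inn}(E)$ factors, since $\mathcal{R}_{(x,a)}$ is independent of $a$), and it may itself be non-faithful. The paper's own examples show this: $Q(24,2)$ is an abelian extension of the non-faithful quandle $Q(12,2)$ as well as of $Q(6,2)$, so distinct quotients of different orders can all serve as bases, and to prove a quandle is \emph{not} an abelian extension you must exclude \emph{every} nontrivial quotient. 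This matters concretely for $Q(36,58)$, which has two nontrivial quotients, $Q(12,10)$ and $Q(4,1)$; your procedure never examines $Q(4,1)$ (fiber size $9$). Your numerology is also off: the actual situations are fiber size $3$ over $Q(10,1)$, $Q(12,10)$, $Q(15,7)$ (plus $9$ over $Q(4,1)$), not the sizes $5$ and $6$ you guess, so the case analysis over abelian groups you sketch is not the one that has to be done. The same issue touches your positive part: if one of the remaining eight quandles were an abelian extension only over a quotient larger than its ${\rm Inn}$-image, your search would misclassify it.

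The exclusion mechanism you propose is also shaky. The cycle types of the computed $\beta_{x,y}$ are not invariants of the extension: the dynamical/constant cocycle depends on the chosen identifications of the fibers with $S$, and re-choosing them multiplies $\beta_{x,y}$ on the left and right by different permutations indexed by the points, which does not preserve cycle type; and ``check the obstruction class in the relevant non-abelian cohomology'' is not a defined computation as stated, so the negative direction collapses to an unexplained computer search. The paper's argument is different and cleaner: it enumerates all nontrivial quotients of the three quandles, then uses Lemma~\ref{lem-homcohom} with the known second homology groups --- $H_2^Q\cong\Z_2$ for $Q(10,1)$, $Q(4,1)$, $Q(15,7)$ forces $H^2_Q(X,\Z_3)=0$ (and $H^2_Q(Q(4,1),A)=0$ for $|A|=9$), so no nontrivial extension of the required order exists --- and for the one base with nontrivial cohomology, $Q(12,10)$ with $H_2^Q\cong\Z_6$, it checks that the two nontrivial $\Z_3$-valued classes $2f$, $4f$ both yield extensions isomorphic to $Q(36,57)\neq Q(36,58)$. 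Your treatment of the positive part ($\Z_2$-fiber cases automatic, the rest verified by computer) does agree with the paper, but the negative part needs the all-quotients enumeration and a sound obstruction, e.g.\ the cohomological one.
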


\begin{proof}
	Computations show that the only non-trivial quotient of $Q(30,4)$ is
	$X=Q(10,1)$.  So it suffices to show that there is no abelian extension of
	$X$ % $Q(10,1)$ 
	of order 30.  We have $H_2^Q(X, \Z)\cong \Z_2$ \cite{rig}.  To get an
	abelian extension of $X $ of order 30 we would have to have a
	non-trivial 2-cocycle $X\times  X \rightarrow \Z_3$ which  would give an
	element of $H^2_Q(X,\Z_3) ={\rm Hom}(\Z_2,\Z_3) = 0$, a contradiction.
	
	The only non-trivial quotients of $Q(36,58)$ are $Q(4,1)$ and $Q(12,10)$.
	Since $H_2(Q(4,1))\cong \Z_2$,  a similar argument implies 
	that $Q(36,58)$ is not an abelian extension of $Q(4,1)$.
	We have $H_2(Q(12, 10)) \cong \Z_6$, and let $f$ be a $2$-cocycle 
	that generates $H^2(Q(12, 10), \Z_6)\cong \Z_6$. Then  
	$2f$ and $4f$ take values in $\Z_3$, and computations show that the corresponding abelian extensions are both isomorphic to $Q(36, 57)$. Since cohomologous cocycles give rise to isomorphic quandles,
	this implies that $Q(36,58)$ is not an abelian extension of $Q(12, 10)$. 
	
	The only  non-trivial quotient of $Q(45, 29)$ is $Q(15,7)$. 
	Since $H_2(Q(15,7))\cong\Z_2$, a similar argument implies that 
	$Q(45, 29)$ is not an abelian extension of $Q(15,7)$.
	
	Then one checks by computer that all the other non-faithful Rig quandles
	are abelian extensions. We note that many cases satisfy the condition in Lemma~\ref{lem-2n} below.
\end{proof}

%\begin{remark}
%{\rm
%Among Rig quandles of order between 36 and 47, 
%
%
%}
%\end{remark}

%To characterize the epimorphism $Q(30,4) \rightarrow Q(10, 1)$, we recall the
%following.  In \cite{AG}, \emph{ extensions by constant $2$-cocycles} were
%defined as follows.  For a quandle $X$ and a set $S$,  a \emph{constant quandle
%cocycle} is a map 
%\[
%\beta: X \times X \rightarrow {\rm Sym}(S),
%\]
%where ${\rm Sym}(S)$ is the symmetric group on $S$, such that  $X \times S$ has
%a  quandle structure by $(x, t) * (y, s) = (x*y, \beta_{x,y}(t) )$ for $x,y\in
%X$ and $s,t\in S$ (see \cite{AG} for details).  This quandle is denoted by $X
%\times_{\beta} S$.  The map $\beta$ satisfies the \emph{ constant cocycle
%condition} $\beta_{x*y, z} \beta_{x, y} = \beta_{x*z, y*z } \beta_{x, z}$ for
%any $x,y,z \in X$ and the quandle condition $\beta_{x,x}={\rm id}$ for any $x
%\in X$.  When an extension by a constant cocycle is not abelian, we call it
%non-abelian. 

In \cite{AG} Proposition 2.11, it was proved that if $Y$ is a connected quandle
and $X=\varphi(Y)\subset {\rm Inn}(Y)$, then each fiber has the same
cardinality, and if $S$ is a set with the same cardinality as a fiber, then
there is a constant cocycle $\beta: X \times X \rightarrow {\rm Sym}(S)$ such
that $Y$ is isomorphic to $X \times_\beta S$. 

\begin{proposition}\label{prop-nonab}
	The quandles $Q(30, 4)$, $Q(36,58)$, and $Q(45, 29)$ are 
	 non-abelian extensions of the quandles $Q(10,1)$, $Q(12,10)$ and $Q(15,7)$,
	 respectively, 
	by constant $2$-cocycles.
\end{proposition}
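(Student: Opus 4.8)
The plan is to read the result off from \cite{AG}, Proposition~2.11, which is quoted just above. Fix $Y \in \{Q(30,4),\, Q(36,58),\, Q(45,29)\}$ and let $X = \varphi(Y) \subseteq {\rm Inn}(Y)$ be the image of the canonical map $\varphi\colon Y \to {\rm Inn}(Y)$, $\varphi(a) = {\cal R}_a$; as a quandle, $X$ is isomorphic to the quotient of $Y$ by the congruence that identifies $a$ and $b$ exactly when ${\cal R}_a = {\cal R}_b$. Every Rig quandle is connected, so \cite{AG}, Proposition~2.11 applies to $Y$: all fibers of $\varphi\colon Y \to X$ share one cardinality $c$, and for any set $S$ with $|S| = c$ there is a constant $2$-cocycle $\beta\colon X \times X \to {\rm Sym}(S)$ with $Y \cong X \times_\beta S$. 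Thus, once $X$ is identified with the asserted base quandle, $Y$ is exhibited as an extension of that base by a constant $2$-cocycle, that is, a non-abelian extension in the sense of \cite{AG}.

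It therefore remains to identify $X = \varphi(Y)$ in each of the three cases. Since $Q(30,4)$, $Q(36,58)$ and $Q(45,29)$ are non-faithful, $\varphi$ is not injective, so $X$ is a quotient of $Y$ of strictly smaller order; moreover $X$ is not the one-element trivial quandle, since ${\cal R}_a = {\cal R}_b$ for all $a,b$ would force $x*a = x*b = x$ for all $x,a,b$, contradicting connectedness (as $|Y| > 1$). By the quotient computations already carried out in the proof of Proposition~\ref{prop-30-4}, the only nontrivial quotient of $Q(30,4)$ is $Q(10,1)$ and the only nontrivial quotient of $Q(45,29)$ is $Q(15,7)$; hence $\varphi(Q(30,4)) \cong Q(10,1)$ and $\varphi(Q(45,29)) \cong Q(15,7)$, and since $30/10 = 45/15 = 3$ the common fiber size is $c = 3$. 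For $Q(36,58)$ the nontrivial quotients are $Q(4,1)$ and $Q(12,10)$; here I would compute the right translations ${\cal R}_a$ of $Q(36,58)$ from the explicit multiplication table, record which of them coincide (equivalently, compute ${\rm Inn}(Q(36,58))$ together with the distinguished conjugacy class $\varphi(Y)$), and verify that $\varphi(Q(36,58)) \cong Q(12,10)$, again with $c = 3$. Feeding these identifications into \cite{AG}, Proposition~2.11 with $|S| = 3$ completes the proof.

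Combined with Proposition~\ref{prop-30-4}, none of the constant cocycles $\beta$ produced this way is cohomologous to an abelian $2$-cocycle, so all three of $Q(30,4)$, $Q(36,58)$ and $Q(45,29)$ are genuine non-abelian extensions; this is really why it is worth recording the present proposition next to Proposition~\ref{prop-30-4}. The only step where real computation is unavoidable is the case $Q(36,58)$: for the other two the base quandle is pinned down abstractly by the quotient structure already used in the proof of Proposition~\ref{prop-30-4}, whereas for $Q(36,58)$ one must distinguish the two candidate bases $Q(4,1)$ and $Q(12,10)$. I expect this to be the main obstacle, though still a routine finite check with the multiplication table of $Q(36,58)$.
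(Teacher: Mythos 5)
Your proposal is correct and follows essentially the same route as the paper: identify the image $\varphi(Y)\subset{\rm Inn}(Y)$ with $Q(10,1)$, $Q(12,10)$, $Q(15,7)$ respectively, and then invoke \cite{AG}, Proposition~2.11 for connected quandles to realize each $Y$ as $X\times_\beta S$ for a constant cocycle $\beta$. The paper simply asserts the identification of $\varphi(Y)$ ``by calculation,'' whereas you deduce it for $Q(30,4)$ and $Q(45,29)$ from the uniqueness of the nontrivial quotient already recorded in the proof of Proposition~\ref{prop-30-4} and compute only the $Q(36,58)$ case directly; this is a harmless refinement, not a different method.
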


\begin{proof}
	By calculation we see that the image of the mapping $\varphi$  from
	$Q(30,4)$ (resp. $Q(36,58)$, $Q(45, 29)$) to %${\rm Inn}(Q(30,4))$
	its inner-automorphism group is isomorphic to $Q(10, 1)$
	(resp. $Q(12,10)$, $Q(15,7)$).  The claim
	follows from  \cite{AG}, Proposition 2.11. 
\end{proof}

We noticed that some non-cohomologous cocycles give isomorphic extensions,
such as  $Q(36,57)$ over $Q(12,10)$ as in the proof of Proposition~\ref{prop-30-4}. % there are more cocycles than extensions. 
%Specifically, we had
We also had the following observation from computer calculations.

\begin{remark}\label{rem-noneq}
	{\rm
	Let $X=Q(15, 2)$, which has cohomology group 
	$H^2_Q(X, \Z_2) \cong \Z_2 \times \Z_2$. % with coefficient group $A=\Z_2$. 
	Hence there are three $2$-cocycles that are
	non-trivial and pairwise non-cohomologous. There are, however, only two
	non-isomorphic abelian extensions   of $X$ by $\Z_2$, $Q(30,1)$ and $Q(30,5)$.  Then
	calculations show that two non-cohomologous cocycles define the extension
	$Q(30,5)$.  Similar examples are found for some 12 element quandles, see
	below. 
	}
\end{remark}

\begin{lemma}\label{lem-ABC}
	For abelian groups $B$ and $C$ and a quandle $X$, 
	let 
	\[
	\phi_B: X \times X \rightarrow B
	\text{ and }
	\phi_C: X \times X \rightarrow C
	\]
	be $2$-cocycles with abelian extensions $E(X,B,\phi_B ) $ and $E(X,C,\phi_C)
	$, respectively.  Then for $A=B \times C$, $\phi=(\phi_B, \phi_C) : X
	\times X \rightarrow A$ is a $2$-cocycle with abelian extension $E(X,A,
	\phi)$, and $E(X,A, \phi)$ is an abelian extension of $E(X,B,\phi_B ) $ and
	$E(X,C,\phi_C) $. 
\end{lemma}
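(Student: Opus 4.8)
The plan is to establish the three assertions of the lemma in order: that $\phi=(\phi_B,\phi_C)$ is a quandle $2$-cocycle with values in $A=B\times C$, that $E(X,A,\phi)$ is an abelian extension of $E(X,B,\phi_B)$, and — by a symmetric argument — that it is also an abelian extension of $E(X,C,\phi_C)$. First I would dispose of the cocycle property. The quandle $2$-cocycle identity
$$\phi(x,y)-\phi(x,z)+\phi(x*y,z)-\phi(x*z,y*z)=0$$
is an equation in the coefficient group, and since addition in $A=B\times C$ is coordinatewise, this identity holds for $\phi=(\phi_B,\phi_C)$ exactly because it holds separately for $\phi_B$ and for $\phi_C$; similarly $\phi(x,x)=(\phi_B(x,x),\phi_C(x,x))=(0,0)$. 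Hence $\phi\in Z^2_Q(X,A)$ and $E(X,A,\phi)$ is a quandle.

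The key step is an auxiliary observation, which I would isolate as a sublemma: if $p\colon Y\to X$ is a quandle homomorphism and $\psi\colon X\times X\to C$ is a $2$-cocycle, then the composite $\widetilde{\psi}=\psi\circ(p\times p)\colon Y\times Y\to C$ is a $2$-cocycle on $Y$. This is immediate: writing the cocycle identity for $\psi$ at the triple $\big(p(u),p(v),p(w)\big)$ and using $p(u*v)=p(u)*p(v)$, $p(u*w)=p(u)*p(w)$, $p(v*w)=p(v)*p(w)$ converts it into the cocycle identity for $\widetilde{\psi}$ at $(u,v,w)$, and $\widetilde{\psi}(u,u)=\psi(p(u),p(u))=0$. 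I would then apply this with $Y=E(X,B,\phi_B)$, with $p\colon E(X,B,\phi_B)\to X$ the first-coordinate projection (a surjective quandle homomorphism, by the definition of the abelian extension), and with $\psi=\phi_C$, obtaining a $2$-cocycle $\widetilde{\phi_C}=\phi_C\circ(p\times p)$ on $E(X,B,\phi_B)$ with values in $C$.

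It then remains to identify the iterated extension $E\big(E(X,B,\phi_B),C,\widetilde{\phi_C}\big)$ with $E(X,A,\phi)$. Unwinding the definitions, the iterated extension has underlying set $(X\times B)\times C$ with operation
$$\big((x,b),c\big)*\big((y,b'),c'\big)=\big((x*y,\,b+\phi_B(x,y)),\,c+\phi_C(x,y)\big),$$
while $E(X,A,\phi)$ has underlying set $X\times(B\times C)$ with operation
$$\big(x,(b,c)\big)*\big(y,(b',c')\big)=\big(x*y,\,(b+\phi_B(x,y),\,c+\phi_C(x,y))\big).$$
The canonical reassociation bijection $\big((x,b),c\big)\mapsto\big(x,(b,c)\big)$ is therefore a quandle isomorphism, so $E(X,A,\phi)$ is realized as the abelian extension of $E(X,B,\phi_B)$ by $C$ via $\widetilde{\phi_C}$. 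Interchanging the roles of $B$ and $C$ — using the projection $E(X,C,\phi_C)\to X$ and pulling $\phi_B$ back along it — gives in exactly the same way that $E(X,A,\phi)$ is an abelian extension of $E(X,C,\phi_C)$.

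The argument is essentially bookkeeping, so there is no genuine obstacle; the only points requiring a little care are choosing the correct $2$-cocycle on $E(X,B,\phi_B)$ that realizes $E(X,A,\phi)$ as an extension — namely the pullback $\phi_C\circ(p\times p)$, not $\phi_C$ itself — and checking that the associativity-of-direct-product identification of the underlying sets is compatible with the quandle operations, which it is by the displayed formulas above.
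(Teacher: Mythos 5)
Your proposal is correct and follows essentially the same route as the paper: the paper's cocycle $\phi'_C\bigl((x_1,b_1),(x_2,b_2)\bigr)=\phi_C(x_1,x_2)$ is precisely your pullback $\phi_C\circ(p\times p)$, and the paper's isomorphism $f\bigl(x,(b,c)\bigr)=\bigl((x,b),c\bigr)$ is your reassociation map, verified by the same two-sided computation of the quandle operation. Your explicit sublemma that cocycles pull back along quandle homomorphisms, and your remark that the second assertion follows by symmetry, only make explicit what the paper leaves implicit.
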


\begin{proof}
	Define  $\phi'_C : E(X, B,  \phi_B) \times E(X, B,  \phi_B)   \rightarrow C$ 
	by \[
	\phi'_C ( \ (x_1, b_1), (x_2, b_2 ) \ )
	=\phi_C (x_1, x_2).
	\]
	Then $\phi'_C$ is a $2$-cocycle of $E(X, B, \phi_B)$ with coefficient $C$.

	Define 
	$f: E(X, A ,  \phi ) \rightarrow E(X, B, \phi_B) \times  C$
	by 
	$f( \ ( x, (b, c) \ ) = ( \ (x, b) , \ c \ )$, which is clearly bijective.
	Then one computes 
	\begin{eqnarray*}
		\lefteqn{ f(\ (x_1, (b_1, c_1)  )*(x_2,  (b_2, c_2) )  \ ) }\\
		&=& 
		f(\ (x_1*x_2, \ (b_1, c_1)  +  \phi (x_1, x_2) ) \ ) \\
		&=& 
		f (\  (x_1*x_2, \ (  b_1 + \phi_B(x_1, x_2) , c_1 +  \phi_C (x_1, x_2) )  \ ) \\
		&=&
		(\  (x_1*x_2 , \  b_1 +  \phi_B (x_1, x_2) ) , \  c_1 +  \phi_C (x_1, x_2)  \ ),
	\end{eqnarray*}
	and 
	\begin{eqnarray*}
		\lefteqn{  f(\ (x_1, (b_1, c_1)  ) \ ) * f(\ (x_2, (b_2, c_2)  \ ) }\\
		&=&
		(\  (x_1, b_1)  , \  c_1\ ) *  (\ (x_2,  b_2 ), \ c_2 ) \ )   \\
		&=&
		(\  (x_1*x_2 , b_1  +  \phi_B(x_1,x_2) ),  \ c_1 + \phi'_C (  \ (x_1, c_1), (x_2, c_2) \ )   \ )  \\
		&=& 
		(\  (x_1*x_2 , b_1  +  \phi_B(x_1,x_2) ),  \ c_1 + \phi_C (  x_1,  x_2 )   \ ),
	\end{eqnarray*}
	as desired.
\end{proof}

Similarly, we obtain the following. 

\begin{lemma}\label{lem-BXC}
    Let $B$ and $C$ be abelian groups and $A=B\times C$, 
    $X$ be a quandle, 
    and $\phi : X \times X \rightarrow A$ be a $2$-cocycle with abelian extension
    $E(X,A,\phi)$. 
    Further, let $p_B$ and $p_C$ be the projections from $A$ onto $B$ and $C$ respectively. 
    Then
    $p_B \phi : X \times  X \rightarrow B$  is a $2$-cocyle giving 
    abelian extension $E(X,B,p_B \phi)$, and
    $E(X,A,\phi) $ is isomorphic to $E(E(X,B,p_B \phi),C, \phi')$,  where $\phi'((x_1,b_1),(x_2,b_2)) = p_C \phi(x_1,x_2)$
    for $(x_1,b_1),(x_2,b_2) \in X \times B$.
\end{lemma}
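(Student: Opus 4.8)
The plan is to deduce this directly from Lemma~\ref{lem-ABC}. First I would record the two easy preliminary observations. Since $p_B$ and $p_C$ are group homomorphisms, the compositions $p_B\phi$ and $p_C\phi$ are again $2$-cocycles of $X$: the $2$-cocycle identity is a linear relation among the values of the function, so it is preserved under post-composition with a homomorphism, and $p_B\phi(x,x)=p_B(0)=0$, $p_C\phi(x,x)=0$. Moreover, under the canonical identification $A=B\times C$ one has $\phi(x,y)=(p_B\phi(x,y),\,p_C\phi(x,y))$; that is, $\phi=(p_B\phi,\,p_C\phi)$ in the notation of Lemma~\ref{lem-ABC}. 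This already gives the first assertion, that $p_B\phi$ is a $2$-cocycle with abelian extension $E(X,B,p_B\phi)$.

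Next I would apply Lemma~\ref{lem-ABC} with $\phi_B=p_B\phi$ and $\phi_C=p_C\phi$. The proof of that lemma produces the $2$-cocycle $\phi'_C$ on $E(X,B,p_B\phi)$ defined by $\phi'_C((x_1,b_1),(x_2,b_2))=p_C\phi(x_1,x_2)$ — which is precisely the map $\phi'$ in the statement — together with the bijection $f\colon E(X,A,\phi)\to E(X,B,p_B\phi)\times C$, $f(x,(b,c))=((x,b),c)$, and it verifies that $f$ is a quandle isomorphism onto $E(E(X,B,p_B\phi),C,\phi')$. Hence $E(X,A,\phi)\cong E(E(X,B,p_B\phi),C,\phi')$, as claimed.

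If a self-contained argument is preferred, I would instead repeat the computation in the proof of Lemma~\ref{lem-ABC} verbatim: check that $\phi'$ satisfies the $2$-cocycle condition on $E(X,B,p_B\phi)$ (inherited from the cocycle condition for $p_C\phi$ on $X$, since the $B$-coordinates are irrelevant), and then expand $f\big((x_1,(b_1,c_1))*(x_2,(b_2,c_2))\big)$ and $f(x_1,(b_1,c_1))*f(x_2,(b_2,c_2))$ and match them coordinate by coordinate.

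I do not expect a real obstacle here. The only point needing a little care is the bookkeeping with the identification $A=B\times C$, and verifying that $\phi'$ on the intermediate quandle $E(X,B,p_B\phi)$ genuinely depends only on the $X$-coordinates, so that its cocycle condition descends from that of $p_C\phi$ on $X$; everything else is the same routine verification already carried out in Lemma~\ref{lem-ABC}.
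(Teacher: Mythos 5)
Your proposal is correct and matches the paper's intent: the paper states this lemma with only the remark ``Similarly, we obtain the following,'' relying on exactly the computation in the proof of Lemma~\ref{lem-ABC} (whose map $f(x,(b,c))=((x,b),c)$ and cocycle $\phi'_C$ are precisely your $f$ and $\phi'$ after writing $\phi=(p_B\phi,p_C\phi)$). It is also the special case of Proposition~\ref{prop-exact} with the split sequence $0\to C\to B\times C\to B\to 0$, so no further argument is needed.
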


Lemma~\ref{lem-BXC} is generalized as follows. 

\begin{proposition}\label{prop-exact}
    Let $X$ be a finite quandle, and $0 \rightarrow C \stackrel{\iota}{\longrightarrow} A \stackrel{p_B}{\longrightarrow} B \rightarrow 0$ be an exact sequence of finite abelian groups.
   Let $\phi: X \times X \rightarrow A$ be a quandle $2$-cocycle.
    Then $E(X, A, \phi)$ is an abelian extension of $E(X, B, p_B \phi)$ with coefficient group $C$. 
\end{proposition}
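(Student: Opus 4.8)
The plan is to realize $E(X,A,\phi)$ explicitly as an abelian extension $E\bigl(Y,C,\psi\bigr)$ of $Y:=E(X,B,p_B\phi)$ by $C$, for a suitable quandle $2$-cocycle $\psi\colon Y\times Y\to C$. When the exact sequence splits this is precisely Lemma~\ref{lem-BXC} (with $p_B$ the projection of $A=B\times C$), so the only new content is to handle the non-split case, which I would do by replacing the projection $A\to C$ used there with an arbitrary set-theoretic section of $p_B$. First I would note that $p_B\phi$ is a genuine quandle $2$-cocycle on $X$ with values in $B$: applying the group homomorphism $p_B$ to the cocycle identity for $\phi$ gives the identity for $p_B\phi$, and $p_B\phi(x,x)=p_B(0)=0$; hence $Y=E(X,B,p_B\phi)$ is defined. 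Identify $C$ with $\ker p_B\subseteq A$ via $\iota$, choose any map $s\colon B\to A$ of sets with $p_Bs=\mathrm{id}_B$ and $s(0)=0$, and for $a\in A$ let $c(a)\in C$ be the unique element with $\iota(c(a))=a-s(p_B(a))$.

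Next I would define $f\colon E(X,A,\phi)\to (X\times B)\times C$ by $f(x,a)=\bigl((x,p_B(a)),\,c(a)\bigr)$. Since $s$ is a section, $f$ is a bijection, with inverse $\bigl((x,\beta),c\bigr)\mapsto\bigl(x,\,s(\beta)+\iota(c)\bigr)$. Define $\psi\colon Y\times Y\to C$ by the rule
\[
\iota\bigl(\psi((x,\beta),(y,\beta'))\bigr)=s(\beta)+\phi(x,y)-s\bigl(\beta+p_B\phi(x,y)\bigr).
\]
Applying $p_B$ to the right-hand side yields $\beta+p_B\phi(x,y)-\bigl(\beta+p_B\phi(x,y)\bigr)=0$, so the right-hand side lies in $\ker p_B=\iota(C)$ and $\psi$ is well defined by injectivity of $\iota$; note also that the right-hand side depends only on $x$, $y$ and $\beta$, so $\psi$ is genuinely a function on $Y\times Y$.

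Then I would transport the quandle operation of $E(X,A,\phi)$ through $f$ and compute. Using $(x,a)*(y,b)=(x*y,\,a+\phi(x,y))$ together with $a=s(p_B(a))+\iota(c(a))$, the first component behaves as $p_B(a+\phi(x,y))=p_B(a)+p_B\phi(x,y)$, which is exactly the $B$-component of the product $(x,p_B(a))*(y,p_B(b))$ in $Y$; and on $C$-components one gets $\iota\bigl(c(x*y,a+\phi(x,y))\bigr)-\iota(c(a))=s(p_B(a))+\phi(x,y)-s\bigl(p_B(a)+p_B\phi(x,y)\bigr)=\iota\bigl(\psi((x,p_B(a)),(y,p_B(b)))\bigr)$. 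Hence, writing $\beta=p_B(a)$, $\beta'=p_B(b)$,
\[
f\bigl((x,a)*(y,b)\bigr)=\Bigl((x,\beta)*(y,\beta'),\ c(a)+\psi\bigl((x,\beta),(y,\beta')\bigr)\Bigr),
\]
so $f$ is an isomorphism from $E(X,A,\phi)$ onto $(X\times B)\times C$ equipped with the operation $(q_1,c_1)*(q_2,c_2)=\bigl(q_1*q_2,\,c_1+\psi(q_1,q_2)\bigr)$. Since $E(X,A,\phi)$ is a quandle, so is this operation; its self-distributivity and idempotency force $\psi$ to satisfy the quandle $2$-cocycle identity and $\psi(q,q)=0$, i.e.\ $\psi\in Z^2_Q(Y,C)$. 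Therefore $E(X,A,\phi)\cong E(Y,C,\psi)=E\bigl(E(X,B,p_B\phi),C,\psi\bigr)$ is an abelian extension of $E(X,B,p_B\phi)$ with coefficient group $C$.

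The main point that requires care — and the only departure from the proof of Lemma~\ref{lem-BXC} — is that $s$ is merely a map of sets, because the sequence need not split; consequently $\psi$ is a bona fide $2$-cocycle rather than the "diagonal" cocycle $p_C\phi$ appearing there. The key observation making the argument succeed is that the discrepancy $s(\beta)+\phi(x,y)-s\bigl(\beta+p_B\phi(x,y)\bigr)$ lands in $\ker p_B$ and depends only on the quotient data, so that it descends to a cocycle on $E(X,B,p_B\phi)$; everything else is the same straightforward verification as in Lemma~\ref{lem-BXC}.
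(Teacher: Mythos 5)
Your proposal is correct and follows essentially the same route as the paper: choose a set-theoretic section $s$ of $p_B$, transport the quandle structure of $E(X,A,\phi)$ through the obvious bijection with $(X\times B)\times C$, read off the resulting $C$-valued $2$-cocycle on $E(X,B,p_B\phi)$, and deduce the cocycle conditions from the fact that the transported operation is a quandle. The only cosmetic difference is that the paper splits your correction term $s(\beta)+\phi(x,y)-s(\beta+p_B\phi(x,y))$ into $p_C\phi(x_1,x_2)-\eta(b_1,p_B\phi(x_1,x_2))$, where $\eta$ is the group $2$-cocycle of the extension $0\to C\to A\to B\to 0$; these define the same cocycle, so the two arguments coincide.
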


\begin{proof} Let $s: B \rightarrow
A$ be a section of the map $p_B$, that is, $p_Bs= {\rm id}_B$.  Then $$p_B(s(b_1+b_2)-s(b_1)-s(b_2)) = 0.
$$ Thus
$s(b_1+b_2)-s(b_1)-s(b_2)$ lies in the kernel of $p_B$ so we can write
  $$s(b_1+b_2)-s(b_1)-s(b_2) = \iota (c) $$ 
  for some $c \in C$. Let $\eta: B \times B \rightarrow C$ be given by 
 $  \eta (b_1, b_2) = c$. 
  Then $p_B(a-sp_B(a) ) = 0$ and hence we can write $a-sp_B(a) = \iota(p_C(a))$ where
 $p_C: A \rightarrow C$. This yields
 $$\iota p_C (a) + s p_B (a) = a$$
 for all $a \in A$. 

Define $\phi' : E(X, B, p_B \phi) \times E(X, B, p_B \phi) \rightarrow C$ by 
\[
    \phi' (\, (x_1, b_1) , (x_2, b_2) \, )= p_C \phi(x_1, x_2) - \eta (b_1, p_B \phi(x_1, x_2) )
\]
for $ (x_i, b_i)\in  E(X, B, p_B\phi)= X \times B$, $i=1,2$. 
To show that $\phi'$ is a 2-cocycle it suffices to show that $E(E(X,B,p_B \phi),C,\phi')$ is a quandle.
For this it suffices to show that the mapping
 $$f: E( E(X, B, p_B\phi),C,\phi') \rightarrow E(X,A,\phi)$$ defined by
$f( \, ((x, b) , c )\, ) = (x, s(b) + \iota (c) )$ is a bijection and preserves the product.
To show that $f$ is a bijection,
 since the domain and codomain of $f$ have the same cardinality,
  it suffices to show that $f$ is a surjection. Given $(x,a) \in X \times A$ we see that 
$$f((x,p_B(a)),p_C(a) ) = (x,sp_B(a) + \iota p_C(a) ) = (x,a).$$
Finally to show that
$f$ preservers the product we compute:
\begin{eqnarray*}
\lefteqn{ f(\ ( (x_1, b_1), c_1)  * ( (x_2,  b_2), c_2)   \ ) }\\
&=& 
f(\, ( (x_1, b_1)* (x_2, b_2) , \, c_1 + \phi'( \, (x_1, b_1) , (x_2, b_2)\,  ) \, ) \, )\\
&=&
f(\, ( (x_1* x_2,  b_1+ p_B \phi (x_1, x_2) ) , \, c_1 + p_C \phi(x_1, x_2) - \eta (b_1, \phi(x_1, x_2) ) ) \, ) \\
&=&
(\,  x_1 * x_2, \, s( b_1+ p_B \phi (x_1, x_2) ) + \iota( c_1 + p_C \phi(x_1, x_2) - \eta (b_1, \phi(x_1, x_2) )\,  )\, ) \\
&=&
(\,  x_1 * x_2, \, s( b_1) + s p_B \phi (x_1, x_2) + \iota \eta (b_1, p_B \phi (x_1, x_2) ) \\
& & 
\mbox{ \hspace{35pt} } + \iota( c_1) + \iota p_C \phi(x_1, x_2) - \iota \eta (b_1, p_B \phi(x_1, x_2) )\,  ) \\
&=&
(\,  x_1 * x_2, \, s( b_1)   
 + \iota( c_1) +  \phi(x_1, x_2) \, ), 
\end{eqnarray*}
and 
\begin{eqnarray*}
\lefteqn{  f(\ ( (x_1, b_1), c_1)   \ ) * f(\ ( (x_2, b_2), c_2)  \ ) }\\
&=&
(x_1, \, s(b_1)+ \iota( c_1 ) ) *   (x_2,\,  s(  b_2 ) + \iota (  c_2  ) ) \\
&=&
( x_1 * x_2, \, s( b_1)  
 + \iota( c_1) +  \phi(x_1, x_2) ),
\end{eqnarray*}
as desired.
\end{proof}

If we  suppress the 2-cocycle in the notation $E(X,A,\phi)$ and write merely
$E(X,A)$ then the above 
Lemma~\ref{lem-ABC} and Proposition~\ref{prop-exact} %proposition 
may be stated more simply.

\begin{corollary} 
{\rm (i)} If $E(X, B)$ and $E(X, C)$ are abelian extensions, then so is $E(X, B \times C)$,
and 
$$E(X, B \times C)=E(E(X, B), C). $$

{\rm (ii)}   If $E(X,A)$ is a finite abelian extension of a quandle $X$ and $C$ is a
    subgroup of the  finite abelian group  $A$ then 
    \[
        E(X,A) = E(E(X,A/C),C).
    \]
\end{corollary}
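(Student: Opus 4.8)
The plan is to deduce both parts directly from the results already established, since the corollary is essentially a cocycle-free repackaging of Lemmas~\ref{lem-ABC} and~\ref{lem-BXC} for part~(i), and of Proposition~\ref{prop-exact} for part~(ii). The only preparatory move needed is to fix, in each case, representative $2$-cocycles for the abelian extensions that the notation $E(X,A)$ suppresses.

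For part~(i), I would begin by choosing $2$-cocycles $\phi_B\in Z^2_Q(X,B)$ and $\phi_C\in Z^2_Q(X,C)$ with $E(X,B)=E(X,B,\phi_B)$ and $E(X,C)=E(X,C,\phi_C)$. Lemma~\ref{lem-ABC} then says that $\phi=(\phi_B,\phi_C)$ is a $2$-cocycle with coefficient group $A=B\times C$; in particular $E(X,B\times C)$ is indeed an abelian extension, which is the first assertion. For the identity $E(X,B\times C)=E(E(X,B),C)$ I would invoke Lemma~\ref{lem-BXC} with this same $\phi$: since here $p_B\phi=\phi_B$ and $p_C\phi=\phi_C$, it produces an explicit isomorphism $E(X,A,\phi)\cong E(E(X,B,\phi_B),C,\phi')$ with $\phi'((x_1,b_1),(x_2,b_2))=\phi_C(x_1,x_2)$. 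Suppressing the cocycles, exactly as in the statement, this reads $E(X,B\times C)=E(E(X,B),C)$.

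For part~(ii), I would apply Proposition~\ref{prop-exact} to the short exact sequence of finite abelian groups
\[
0 \longrightarrow C \stackrel{\iota}{\longrightarrow} A \stackrel{\pi}{\longrightarrow} A/C \longrightarrow 0,
\]
where $\iota$ is the inclusion of the subgroup $C$ and $\pi$ is the canonical quotient map. Writing $E(X,A)=E(X,A,\phi)$ for a chosen $2$-cocycle $\phi$, Proposition~\ref{prop-exact} asserts that $E(X,A,\phi)$ is an abelian extension of $E(X,A/C,\pi\phi)$ with coefficient group $C$; dropping the cocycles gives the desired $E(X,A)=E(E(X,A/C),C)$. It is worth recording that the hypothesis of Proposition~\ref{prop-exact} only requires a set-theoretic section of $\pi$, which always exists, so no splitting of the sequence of abelian groups is assumed.

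There is no genuine obstacle here; the single point requiring care is the bookkeeping of the cocycles hidden behind the notation $E(X,A)$. One must fix representative cocycles before quoting the lemmas and then check that the suppression is consistent with the earlier (cocycle-laden) statements — in part~(i) this is the observation $p_B\phi=\phi_B$, $p_C\phi=\phi_C$, and in part~(ii) it is merely the identification of the group $B$ in Proposition~\ref{prop-exact} with the quotient $A/C$. Everything else is verbatim from the cited results, so the proof is short.
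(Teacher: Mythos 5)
Your proposal is correct and follows the paper's own route: the paper offers no separate argument, stating the corollary as the cocycle-suppressed restatement of Lemma~\ref{lem-ABC} (together with Lemma~\ref{lem-BXC}) for part (i) and of Proposition~\ref{prop-exact}, applied to $0\to C\to A\to A/C\to 0$, for part (ii), exactly as you do. Your remarks on fixing representative cocycles and on the section of $\pi$ being merely set-theoretic are accurate bookkeeping, not deviations.
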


We note that if $E(X, A)$ is connected, then $E(X, A/C)$ is connected since the
epimorphic image of a connected quandle is connected.

We examine some connected abelian extensions of Rig quandles of order up to 12.
In the following, we use the notation $E  \stackrel{n}{\longrightarrow} X$ if
$E=E(X, \Z_n. \phi)$ for some $2$-cocycle $\phi$ such that $E$ is connected.
$E_2 \stackrel{m}{\Longrightarrow} E_1 \stackrel{d}{\Longrightarrow} X$ if
there is a short exact sequence $0 \rightarrow \Z_m \rightarrow \Z_n
\rightarrow \Z_d\rightarrow 0$ such that $\Z_n \subset H^2_Q(X, \Z_n)$ and
$E_1, E_2$ are corresponding extensions as in Proposition~\ref{prop-exact}.
In this case $E_2 \stackrel{n}{\longrightarrow} X$ where $n=md$.  The notation
$ \emptyset \stackrel{1}{\longrightarrow} X$ indicates that $H^2_Q(X, A)=0$ for
any coefficient group $A$, and hence there is no non-trivial abelian extension.
It is noted to the left when all quandles in question are keis.
\begin{eqnarray*}
	\emptyset \stackrel{1}{\longrightarrow} Q(8,1) \stackrel{2}{\longrightarrow} Q(4,1) & & \\
	{\rm (Kei)} \quad \emptyset \stackrel{1}{\longrightarrow} Q(24,1) \stackrel{2}{\longrightarrow} Q(12,1) \stackrel{2}{\longrightarrow}Q(6,1) & & \\
	\emptyset \stackrel{1}{\longrightarrow} Q(24,2) \stackrel{2}{\Longrightarrow} Q(12,2) \stackrel{2}{\Longrightarrow}Q(6,2)  & & \\
	{\rm (Kei)} \quad  \emptyset \stackrel{1}{\longrightarrow} Q(27, 1) \stackrel{3}{\longrightarrow} Q(9,2)=Q(3,1)\times Q(3,1) & & \\
	\emptyset \stackrel{1}{\longrightarrow} Q(27, 6) \stackrel{3}{\longrightarrow} Q(9,3)=\Z_3[t]/(t^2+1)& & \\
	\emptyset \stackrel{1}{\longrightarrow} Q(27, 14) \stackrel{3}{\longrightarrow} Q(9,6)=\Z_3[t]/(t^2+2t+1)
	& & \\
	\emptyset \stackrel{1}{\longrightarrow}  Q(24,8) = Q(3,1)
	\times Q(8,1) \stackrel{2}{\longrightarrow} Q(12,4)=Q(3,1)
	\times Q(4,1) & & % \\
\end{eqnarray*}
 
In the following, we list abelian extensions of Rig quandles that contain
quandles of order higher than $35$.  The notation $Q(n, -)$ indicates that it
is a quandle of order $n>35$ and is not a Rig quandle.  
%In general we do not know if these quandles are connected.
The notation  $?
\longrightarrow Q(n, - )  $ indicates that we do not know if non-trivial
abelian extension exists for the quandle $Q(n, -)$ in question.  
Except for the quandle $Q(120,-)$ in the third line, we have explicit quandle operation tables for
the quandles appearing in the list and hence we can prove by computer that such quandles are connected.
%Even when we
%know that abelian extensions exist from cohomology groups, we do not have
%explicit quandle operation tables for these quandles of order $n>35$, except
%the order 60 extension $Q(60, -)$ of $Q(12, 3)$, see Section~\ref{sec-higher}.
   
\begin{eqnarray*}
 ? \longrightarrow Q(120, - )  \stackrel{6}{\longrightarrow} Q(20,3) \stackrel{2}{\longrightarrow} Q(10,1) & &  \\
%? \longrightarrow Q(120, - ) \stackrel{10}{\longrightarrow} Q(12,3) & &  \\
 ? \longrightarrow Q(120, - ) \stackrel{5}{\Longrightarrow} Q(24,7)  \stackrel{2}{\Longrightarrow}  Q(12,3)   & & \\
 ? \longrightarrow Q(120, - ) \stackrel{2}{\Longrightarrow} Q(60, -)  \stackrel{5}{\Longrightarrow}  Q(12,3)   & & \\
%? \longrightarrow  Q(48, - ) \stackrel{4}{\longrightarrow}   Q(12,5)  & & \\
  ? \longrightarrow Q(48, - ) \stackrel{2}{\Longrightarrow}  Q(24, 4) \stackrel{2}{\Longrightarrow} Q(12,5)  & & \\
% ? \longrightarrow  Q(48, - ) \stackrel{4}{\longrightarrow}   Q(12,6)  & & \\
    ? \longrightarrow Q(48, - )  \stackrel{2}{\Longrightarrow}  Q(24, 3) \stackrel{2}{\Longrightarrow} Q(12,6)   & & 
  \end{eqnarray*}
  It is interesting to remark that 
  %the quandles $Q(48,-)$, $Q(12, 6)$ and $Q(24, 3)$ appearing in the last line 
all   quandles appearing in the first and the last lines are keis.
  %, but we do not know if the corresponding 
  %extension of order 48 is a kei.

These observations raise the following questions.

$\bullet$ What is a condition on cocycles  for  abelian, or non-abelian  extensions
to be connected?

In \cite{AG}, a condition for an extension to be connected was given in terms
of elements of the inner automorphism group.

$\bullet$ Is there an infinite sequence of abelian extensions of connected
quandles $\cdots \rightarrow Q_n \rightarrow \cdots \rightarrow Q_1$?

We note that sequences of abelian extensions of connected quandles terminate as much as we were able to 
compute. 

$\bullet$ Is any abelian extension of a finite kei  a kei?

In relation to this question, below we observe a condition of $2$-cocycles that
give extensions that are keis.

\begin{lemma}\label{lem-kei}
    Let $X$ be a kei, $\phi$ be a $2$-cocycle with coefficient group $A$,
    and $E$ be the abelian extension of $X$ with respect to $\phi$.
    Then $E$ is a kei if and only if $\phi (x,y) + \phi(x*y, y)= 0 \in A$ for any $x, y \in X$,
    in additive notation.
\end{lemma}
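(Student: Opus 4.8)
The plan is to unwind the definitions of "kei" and "abelian extension" and read off the condition directly. Recall that $E = E(X,A,\phi) = X \times A$ with operation $(x,a)*(y,b) = (x*y,\, a+\phi(x,y))$. By definition $E$ is a kei exactly when $\big((x,a)*(y,b)\big)*(y,b) = (x,a)$ for all $x,y \in X$ and $a,b \in A$. So first I would simply compute the left-hand side: $(x,a)*(y,b) = (x*y,\, a+\phi(x,y))$, and then applying $*(y,b)$ again gives $\big((x*y)*y,\ a+\phi(x,y)+\phi(x*y,y)\big)$.

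Next I would use that $X$ itself is a kei, so $(x*y)*y = x$, and the first coordinate automatically matches. Hence the kei condition for $E$ reduces to the single equation in the second coordinate: $a + \phi(x,y) + \phi(x*y,y) = a$ for all $a \in A$ and all $x,y \in X$, which is equivalent to $\phi(x,y) + \phi(x*y,y) = 0 \in A$ for all $x,y \in X$. This gives both implications at once: if the cocycle condition $\phi(x,y)+\phi(x*y,y)=0$ holds then $E$ satisfies the kei identity, and conversely if $E$ is a kei then specializing to any fixed $a$ (e.g. $a=0$) forces $\phi(x,y)+\phi(x*y,y)=0$.

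There is essentially no obstacle here; the only thing worth a word of care is confirming that we do not also need any constraint coming from the $b$-component of the second argument, but since the operation $(x,a)*(y,b)$ does not depend on $b$ at all, the computation above is complete and the equivalence is immediate. I would also remark, if desired, that when $A$ has $2$-torsion one may equivalently write the condition as $\phi(x*y,y) = -\phi(x,y)$, and that it is automatically symmetric under replacing $x$ by $x*y$ (since $(x*y)*y = x$), so it is really a condition on unordered "kei-orbits" of pairs.
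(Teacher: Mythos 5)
Your proposal is correct and is essentially identical to the paper's proof: both compute $\bigl((x,a)*(y,b)\bigr)*(y,b)=\bigl((x*y)*y,\ a+\phi(x,y)+\phi(x*y,y)\bigr)$, use that $X$ is a kei to match the first coordinate, and read off the equivalence from the second coordinate. (Your closing aside about $2$-torsion is unnecessary---the rewriting $\phi(x*y,y)=-\phi(x,y)$ is equivalent in any abelian group---but it does not affect the argument.)
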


\begin{proof}
	One computes, for any $x, y \in X$ and $a, b \in A$,  
	\begin{eqnarray*}
		\lefteqn{ [ \ (x, a)*(y, b) \ ] *(y, b) }\\
		&=& (x*y, a +  \phi (x, y) ) * (y, b) = (\ (x*y)*y, a+  \phi (x, y) +\phi(x*y, y) ).
	\end{eqnarray*}
	For any $x, y \in X$ and $a, b \in A$, the right-hand side is equal to $(x, a)$ if and only if 
	$\phi (x,y) +\phi(x*y, y)=0 $ for any $x, y \in X$. 
\end{proof}

\begin{remark}
	{\rm 
	Let $X=Q(12, 7)$. Then $H^2_Q(X, \Z_4) \cong \Z_2 \times \Z_4$.  By
	computer calculation, there is a particular generating $2$-cocycle of the
	$\Z_2$-factor, $Q(24, 15) \stackrel{2}{\longrightarrow} Q(12,7)$.  Notice
	that $H^2_Q( Q(24, 15), \Z_4) \cong \Z_4$.  We also note that there are
	epimorphisms $Q(24, 14) {\rightarrow} Q(12,7) $ and $Q(24, 18) {\rightarrow
	} Q(12,7) $, where $H^2_Q( Q(24, 14), \Z_4) \cong \Z_4$ and $H^2_Q( Q(24,
	14), \Z_2) \cong \Z_2 \times \Z_2$.  Hence there is a quandle of order $48$
	corresponding to the $\Z_4$-factor of $H^2_Q(X, \Z_4)$, that has epimorphic
	image $Q(24, 14)$ or $Q(24, 18)$.
	}
%(Last two non-kei) We know from Leandro's calculation that $Q(24, 15)$ is the $\Z_2$-factor extension.
%$ ? \longrightarrow Q(96, - ) \stackrel{4}{\longrightarrow}  Q(24, 15) \stackrel{2}{\longrightarrow} Q(12,7)$.

%So the following should correspond to the $\Z_4$-factor.
%
%$ ? \longrightarrow Q(192, - ) \stackrel{8}{\longrightarrow}  Q(24, 14) \stackrel{2}{\rightarrow ?} Q(12,7) $  
%(Last two non-kei) 
%
%$ ? \rightarrow Q(48, - ),  Q(48, - ), Q(48, - ) \stackrel{2,2}{\longrightarrow}  Q(24, 18) \stackrel{2}{\rightarrow ?} Q(12,7) $  
%(Last two non-kei)
%
%Comments: Since these are from the $\Z_4$-factor, 
%the extensions by $\Z_2$ has the extension by $\Z_2$ that originates $\Z_4$ by Lemma~\ref{lem-exact}.
%But in these cases, the cohomology groups are larger ($\Z_8$ and $\Z_2 \times \Z_2$),
%the first such example in rig quandles.
\end{remark}

\begin{remark}
{\rm 
Let $X=Q(12, 8)$. Then $H^2_Q(X, \Z_2) \cong (\Z_2)^3$. 
There are three epimorphisms from Rig quandles 
of order less than 36: 
\begin{align*}
Ê Q(24, 5)\to X, &&Q(24, 16)\to X,&&Q(24, 17)\to X 
\end{align*}
and their cohomology groups with $A=\Z_2$ 
are $(\Z_2)^3$, $(\Z_2)^2$,  and $(\Z_2)^2$, respectively.
We note that there are 7 cocycles that are not cohomologous each other, 
yet there are only 3 extensions as in Remark~\ref{rem-noneq}.
%Lemma~\ref{lem-equivext} implies that at least one of the three extensions have more than 
%one equivalence classes of epimorphisms. 
%This is the same situation as  in Example~\ref{ex-noneq}. We note that all these quandles are keis.
}
\end{remark}

% $  \stackrel{2, 2, 2}{\longrightarrow} Q(24, 5) \stackrel{2}{\longrightarrow} Q(12,8) $
% 
% $  \stackrel{2, 2}{\longrightarrow} Q(24, 16) \stackrel{2}{\longrightarrow} Q(12,8) $
%  
% $   \stackrel{2, 2}{\longrightarrow}  Q(24, 17) \stackrel{2}{\longrightarrow} Q(12,8) $
%
%All above are keis.

%Comments: 
%In Leandro's list, one $Q(24,5)$ and two $Q(24,16)$ are listed, but not $Q(24, 17)$. 
%This must be just a different choices of basis.

\begin{remark}
{\rm 
    Let $X=Q(12, 9)$. Then $H^2_Q(X, \Z_4) \cong \Z_4 \times \Z_4$. %non-kei
There are two extensions in Rig quandles of order less than 36:
\[
    Q(24, 6)\rightarrow Q(12,9),\quad Q(24, 19) \rightarrow Q(12,9)
\]
and 
with $A=\Z_4$ their cohomology groups are $\Z_2 \times \Z_2 \times \Z_4$ and $\Z_2 \times \Z_4$, 
respectively. There are 3 cocycles that give order 2 extensions, yet there are two extensions
as in Remark~\ref{rem-noneq}.
%Again Lemma~\ref{lem-equivext} implies that one of them has more than one equivalence classes
%of epimorphisms to $X=Q(12,9) $.
%This is again  the same situation as in Example~\ref{ex-noneq}.
%$  \stackrel{2,2,4}{\longrightarrow} Q(24, 6) \stackrel{2}{\Longrightarrow} Q(12,9) $
%
%
%$  \stackrel{2,4}{\longrightarrow} Q(24, 19) \stackrel{2}{\Longrightarrow} Q(12,9) $
%
%(All non-kei) There are three order $2$ elements in $H^2$, yet two extensions, so
%two must be giving the same extension. 
}
\end{remark}

\begin{remark}
{\rm 
Let $X=Q(12, 10)$. Then $H^2_Q(X, \Z_6) \cong   \Z_6$.  There is one extension
among  Rig quandles of order less than 36, $Q(24, 20)  \rightarrow Q(12,10) $ and we have
$H^2_Q(Q(24, 20), \Z_3) \cong   \Z_3 \times \Z_3$.  One of the order 3 cocycle
corresponds to an extension of $X$ of order 6.
}
\end{remark}

\section{Finding extensions of higher order}\label{sec-higher}

We further investigated extensions among non-faithful quandles over Rig
quandles.  Extensions of some of the Rig quandles of order greater than 12 and
less than 28 can be found in %\url{http://code.google.com/p/rig/w/list}.  
\url{http://github.com/vendramin/rig/wiki}.
%There
%are no Rig quandles of order 22, and all Rig quandles of order 23 have trivial
%second cohomology groups.  
The computations of cocycles become difficult for quandles of order 28.  Thus
we take an approach of constructing non-faithful connected quandles and
identify extensions as follows. 

%Recall that Rig quandles are connected quandles of order up to 35.  
We considered Rig quandles of order less than 36. %
To find
extensions of Rig quandles, we made a list ${\cal N \cal F}$  of 315
non-faithful connected generalized Alexander quandles with respect to pairs
$(G, f)$ for non-abelian groups $G$ and $f \in {\rm Aut}(G)$ (see
Section~\ref{sec-prelim}). We considered all
groups or order $n$, $36 \leq n <128$, and for $n=128$, only the first $172$
groups in \GAP~ Small Groups library (the library contains all the $2328$
groups of size $128$).  All possible automorphisms $f \in {\rm Aut}(G)$ were
considered up to conjugacy.  For example, there are 39 non-abelian groups of
order 108 which give 74 connected non-faithful quandles of order 108. 

Lemma 2.3 and Proposition 2.11  in \cite{AG} were used to determine abelian
extensions and non-abelian extensions by constant cocycles among quandles in
${\cal N \cal F}$ over Rig quandles of order less than 36.  %
Specifically, quotient quandles are
computed,  {\it dynamical } cocycles (\cite{AG}, Lemma 2.3) are computed,
whether the cocycles are constant is determined, and whether the extensions are
abelian is determined.

We note that most examples computed for abelian extensions are 2-fold
epimorphisms, and observe the following.

\begin{lemma}\label{lem-2n}
    Let $Y$ be a finite connected %non-faithful 
    quandle of  even order $2n$, 
    and assume that $\varphi(Y)=X \subset {\rm Inn}(Y)$ with  $|X|=n$. 
    Then $Y$ is isomorphic to 
    an abelian extension of $X$ by $\Z_2$.
\end{lemma}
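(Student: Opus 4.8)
The plan is to invoke the structure theory of \cite{AG} directly. By \cite{AG}, Proposition 2.11, since $Y$ is a connected quandle and $X=\varphi(Y)\subset {\rm Inn}(Y)$, every fiber of $\varphi\colon Y\to X$ has the same cardinality; here $|Y|=2n$ and $|X|=n$, so each fiber has exactly two elements. Taking $S=\{0,1\}$ (a set of the size of a fiber), \cite{AG} gives a constant $2$-cocycle $\beta\colon X\times X\to {\rm Sym}(S)={\rm Sym}(\{0,1\})\cong\Z_2$ such that $Y$ is isomorphic to $X\times_\beta S$, with operation $(x,t)*(y,s)=(x*y,\beta_{x,y}(t))$.

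First I would observe that ${\rm Sym}(S)$ has order $2$, so we may identify ${\rm Sym}(S)$ with the additive group $\Z_2=\{0,1\}$, where the nontrivial permutation corresponds to $1$ acting by $t\mapsto t+1$. Under this identification, each $\beta_{x,y}$ is ``add $\phi(x,y)$'' for a uniquely determined element $\phi(x,y)\in\Z_2$; explicitly $\phi(x,y)=\beta_{x,y}(0)$. Then the operation on $X\times_\beta S$ becomes $(x,t)*(y,s)=(x*y,\,t+\phi(x,y))$, which is precisely the operation of the abelian extension $E(X,\Z_2,\phi)$. It remains to check that $\phi$ is a genuine quandle $2$-cocycle: the constant cocycle condition $\beta_{x*y,z}\beta_{x,y}=\beta_{x*z,y*z}\beta_{x,z}$ for composition in ${\rm Sym}(S)$ translates, under the identification with $(\Z_2,+)$, into the additive identity $\phi(x*y,z)+\phi(x,y)=\phi(x*z,y*z)+\phi(x,z)$, which (since $\Z_2$ has characteristic $2$, so signs are irrelevant) is exactly the $2$-cocycle condition $\phi(x,y)-\phi(x,z)+\phi(x*y,z)-\phi(x*z,y*z)=0$; and the quandle condition $\beta_{x,x}={\rm id}$ gives $\phi(x,x)=0$. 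Hence $\phi\in Z^2_Q(X,\Z_2)$ and $Y\cong X\times_\beta S=E(X,\Z_2,\phi)$, an abelian extension of $X$ by $\Z_2$.

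The only genuinely substantive input is the cited result of \cite{AG} (equal fiber size and the existence of the constant cocycle realizing $Y$); everything else is the routine bookkeeping of recognizing that the symmetric group on a two-element set, being cyclic of order two, makes the constant-cocycle extension coincide with the abelian extension. The main point to be careful about is the translation between multiplicative composition in ${\rm Sym}(S)$ and additive notation in $\Z_2$, and checking that the two axioms for a constant cocycle become exactly the two axioms for a quandle $2$-cocycle under this translation; I expect no obstacle there since $|{\rm Sym}(S)|=2$ forces the group to be abelian and of exponent $2$.
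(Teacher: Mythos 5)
Your proposal is correct and follows essentially the same route as the paper: invoke Proposition 2.11 of \cite{AG} to realize $Y$ as $X\times_\beta S$ with $|S|=2$, identify ${\rm Sym}(S)$ with $\Z_2$ so that $\beta_{x,y}(t)=t+\phi(x,y)$, and observe that the constant cocycle and quandle conditions for $\beta$ become exactly the quandle $2$-cocycle conditions for $\phi$. Your explicit check of the cocycle identity is a slight elaboration of what the paper leaves implicit, but the argument is the same.
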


\begin{proof}
	As in  Proposition~\ref{prop-nonab}, it follows from Proposition 2.11 of
	\cite{AG} that $Y$ is  isomorphic to  an extension  $X \times_\beta S$ by a
	constant cocycle $\beta$, where a set $S$ consists of two elements.  Let
	$S=\{0, 1\}$, and we identify $S$ with $\Z_2$. Then ${\rm Sym}(S)$
	consists of two elements, the identity and the transposition of $0$ and
	$1$.  We define $\phi: X \times X \rightarrow \Z_2$ by $\phi(x, y)=0$ if
	$\beta_{x,y} = {\rm id}$ and $\phi(x, y)=1$ if $\beta_{x,y}$ is the
	transposition.  Then $\beta_{x,y}(t)=t + \phi(x,y)$ for $t \in \Z_2$ and
	$\phi$ is a $2$-cocycle.
\end{proof}

\begin{remark}
{\rm 
	%The following Rig quandles 
	Among Rig quandles of order less than 36, the following %
	have 2-fold extensions among quandles in ${\cal N \cal F}$.
	\begin{eqnarray*}
		Q(18, i) , & & i=1,3, 4, 5, 6, 7, 8.\\
		Q(24, i), & & i=6, 10, 11, 13, 18, 22,23. \\
		Q(28, i) , & & i=1, 2, 3, 4, 5, 6, 7, 8, 9. 
	\end{eqnarray*}
	Other than these, we found that $Q(12,3) $ has a 5-fold abelian extension, and 
	$Q(15,2)$ has a 4-fold non-abelian extension in ${\cal N \cal F}$.
    We remark that the 5-fold extension of $Q(12,3)$ was predicted by
    Lemma~\ref{lem-BXC}, see the list in Section~\ref{sec-ext} for $Q(12,3)$.
    Thus this specific extension is found in ${\cal N \cal F}$.
	}
\end{remark}

We observe the following generalization of Lemma~\ref{lem-2n}. 
Let $Y$ be a finite connected 
	quandle, and let  $\varphi(Y)=X \subset {\rm Inn}(Y)$ with  $|X|=n$.  
	It follows again from Proposition 2.11 of \cite{AG} that $Y$ is isomorphic
    to an extension   $ X \times_\beta S$ 
    by a constant cocycle $\beta$.

\begin{lemma}
	%Let $Y$ be a finite connected 
	%quandle, and let  $\varphi(Y)=X \subset {\rm Inn}(Y)$ with  $|X|=n$.  
	Let $X$ and $Y$ be as above. 
	If
	$|Y|=k n$
	where $k$ is a prime power, and the subgroup $H_\beta$ of ${\rm Sym}(S)$ generated by $\{
	\, \beta_{x,y} \, | \, x,y \in X \, \}$ is cyclic of order $k$, then $Y$ is isomorphic to
	an abelian extension of $X$.
\end{lemma}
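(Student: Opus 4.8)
The plan is to reduce to the constant‑cocycle picture recalled just above the statement and then to convert the constant cocycle into an honest $\Z_k$‑valued quandle $2$-cocycle. Proposition~2.11 of \cite{AG} already gives an isomorphism $Y\cong X\times_\beta S$ for a constant cocycle $\beta\colon X\times X\to{\rm Sym}(S)$; since the underlying set of $X\times_\beta S$ is $X\times S$ and $|Y|=kn=|X|\,|S|$, I get $|S|=k$. By hypothesis the subgroup $H_\beta=\langle\,\beta_{x,y}\mid x,y\in X\,\rangle\leq{\rm Sym}(S)$ is cyclic of order $k$; write $H_\beta=\langle\sigma\rangle$ with $\sigma$ of order $k$ in ${\rm Sym}(S)$.

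The first—and I expect only—genuine step is to show that $H_\beta$ acts regularly on $S$, equivalently (as $|H_\beta|=|S|=k$) transitively, equivalently that $\sigma$ is a single $k$-cycle. This is exactly where I would invoke the hypothesis that $k$ is a prime power $p^m$: the lengths of the cycles of $\sigma$ all divide $p^m$, their least common multiple is $p^m$ because $\sigma$ has order $p^m$, and their sum is $|S|=p^m$; an l.c.m.\ equal to $p^m$ forces one cycle length to equal $p^m$, and then the sum condition forces $\sigma$ to be that single $p^m$-cycle. (For general $k$ this can fail—e.g.\ a transposition times a $3$-cycle times a fixed point on six letters has order $6$ but is intransitive—so the prime‑power assumption is essential here.)

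With regularity established, I fix $s_0\in S$; then $j\mapsto\sigma^{j}(s_0)$ is a bijection $\Z_k\to S$ through which $\sigma$ becomes translation by $1$. Transporting the $H_\beta$‑action along this bijection, each $\beta_{x,y}\in H_\beta$ is $\sigma^{\phi(x,y)}$ for a unique $\phi(x,y)\in\Z_k$, and it acts on $S\cong\Z_k$ by $t\mapsto t+\phi(x,y)$. The relation $\beta_{x,x}={\rm id}$ gives $\phi(x,x)=0$, and the constant cocycle condition $\beta_{x*y,z}\beta_{x,y}=\beta_{x*z,y*z}\beta_{x,z}$, read inside $\langle\sigma\rangle\cong\Z_k$, becomes
\[
\phi(x,y)-\phi(x,z)+\phi(x*y,z)-\phi(x*z,y*z)=0
\]
for all $x,y,z\in X$, i.e.\ $\phi\in Z^2_Q(X,\Z_k)$.

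Finally, under these identifications the operation on $X\times_\beta S$ reads $(x,t)*(y,s)=(x*y,\beta_{x,y}(t))=(x*y,\,t+\phi(x,y))$, which is precisely the operation of the abelian extension $E(X,\Z_k,\phi)$ defined in Section~\ref{sec-prelim}. Hence $Y\cong X\times_\beta S=E(X,\Z_k,\phi)$ is an abelian extension of $X$, as claimed. The only delicate point is the cycle‑structure argument of the second paragraph; everything else is the same bookkeeping already done for $k=2$ in the proof of Lemma~\ref{lem-2n}.
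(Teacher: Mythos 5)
Your proof is correct and follows essentially the same route as the paper's: invoke Proposition~2.11 of \cite{AG} to write $Y\cong X\times_\beta S$ with $|S|=k$, use the prime-power hypothesis to see the generator of $H_\beta$ is a single $k$-cycle, identify $S$ with $\Z_k$ so that $\beta_{x,y}(t)=t+\phi(x,y)$, and conclude $Y\cong E(X,\Z_k,\phi)$. You merely spell out two points the paper leaves implicit (the cycle-structure argument forcing a $k$-cycle, and the verification that $\phi$ satisfies the quandle $2$-cocycle conditions), which is fine.
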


\begin{proof}
%    It follows again from Proposition 2.11 of \cite{AG} that $Y$ is isomorphic
%    to an extension   $ X \times_\beta S$ 
%    by a constant cocycle $\beta$, with 
%    a set $S$
%    consisting of $k$ elements. 
Since $|Y|=kn$, we have $|S|=k$. 
Since $H_\beta$ is cyclic of order $k$
and $k$ is a prime power, 
it is generated by a $k$-cycle $\sigma$. 
 We can identify  $S$ with $\Z_k$ in such a way that $\sigma=(1, 2, \ldots, k)$.
 Then $\sigma (t) = 1+t$ for any $t \in \Z_k$.
% If $|S|=3$, then $H_\beta$ is
%    isomorphic to $\Z_2$ or $\Z_3$.  If $H_\beta$ is isomorphic to $\Z_2$, then
%    the action of $H_\beta$  on $S$ is not transitive and $Y$ is not connected,
%    thus a contradiction.  If $H_\beta$ is  isomorphic to $\Z_3$, then for any
Hence for any 
    $x, y \in X$, $\beta_{x,y}=\sigma^i$ for some $i \in \Z_k$,  %acts on $S$ by a cyclic permutation,
    %of length $k$, 
  % and by relabeling $S$, $\sigma=(1, 2, \ldots, k)$.
   %a fixed bijection between $S$ and $\Z_k$, %the action is written
   so that 
     $\beta_{x,y}(t)=t+\phi(x, y)$ with $\phi(x, y)=i$. 
%
%    If $|S|=4$, then $H_\beta$ is isomorphic to $\Z_2$,   $\Z_3$, $\Z_4$, or
%    $\Z_2 \times \Z_2$.  By a similar argument as above, it cannot be $\Z_2$ or
%    $\Z_3$ by connectivity of $Y$.  For $\Z_4$, a similar argument for a cyclic
%    permutation implies the conclusion  using a bijection between $S$ and
%    $\Z_4$.  For $\Z_2 \times \Z_2$, a bijection between $S$ and $\Z_2 \times
%    \Z_2$ can be used.
\end{proof}

%The following does not hold, then what can we say?
%
%\begin{lemma}
%Let $Y=X \times_\beta S$ be an extension by a constant cocycle.
% If the subgroup of $ {\rm Sym}(S)$ generated by $\beta_{x, y} \in {\rm Sym}(S)$, $x, y \in X$, 
% is non-abelian, then $Y$ is a non-abelian extension.
%\end{lemma}

\begin{remark}
{\rm
Although homology groups of Rig quandles have been computed in \cite{rig},  as
mentioned earlier, explicit $2$-cocycles have not been computed for Rig
quandles of order greater than 23. The above computations of extensions give
rise to explicit $2$-cocycles, and also may be used for computations of cocycle
invariants as in Section~\ref{sec-recover}.  Furthermore, the computations
identify the pairs $(G, f)$ of generalized Alexander quandles that are abelian
extensions of Rig quandles.
}
\end{remark}

\section{Problems, questions and conjectures}

For convenience of the reader, we collect here questions, problems and
conjectures discussed all over the text.

\begin{problem}
    Compute explicit $2$-cocycles and extensions of Rig quandles of order $\geq 24$.
\end{problem}

In Remark \ref{rem-endmono} we made the following conjecture.

\begin{conjecture}
  Let $X$ be one of the following quandles:
    \begin{align*}
        & Q(12,1), && Q(20,3), && Q(24,3), && Q(24,4), && Q(24,5),\\
        & Q(24,6), && Q(24,14), && Q(24,16), && Q(24,17), && Q(30,1),\\
        & Q(30,16), && Q(32,5), && Q(32,6), && Q(32,7), && Q(32,8).
    \end{align*}
    Then every knot $K$ is end monochromatic with $X$.
\end{conjecture}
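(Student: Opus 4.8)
The plan is to exploit the abelian-extension structure of the listed quandles, reduce the conjecture to a triviality statement for a $\Z_2$-valued cocycle invariant, and then reformulate that as a question in quandle homology.

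First I would record that every $X$ in the list is a \emph{two-fold} abelian extension $X = E(Y,\Z_2,\phi)$, where $Y$ is the faithful Rig quandle attached to it in Remark~\ref{rem-endmono} (so $Y$ ranges over $Q(6,1),Q(10,1),Q(12,5),\dots,Q(16,6)$) and $\phi\in Z^2_Q(Y,\Z_2)$; the coefficient group is $\Z_2$ in every case because each of these extensions is $2$-to-$1$. Since each such $Y$ is faithful, Lemma~\ref{lem-nosaka1} gives that $(K,Y)$ is end monochromatic for every knot $K$, so Lemma~\ref{lem-extmono} converts the conjecture into the single assertion
\[
\Phi_\phi(K) = \kc{Y}{K}\, e \in \Z[\Z_2]\qquad\text{for every knot } K,
\]
i.e.\ that the $\Z_2$-cocycle invariant attached to $\phi$ is trivial on all knots. (This also explains why these are the candidate quandles among the abelian extensions of Remark~\ref{rem-endmono}.)

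Next I would reformulate this homologically. Reducing modulo $2$, the Boltzmann weight of a $Y$-coloring $\mathcal C$ of a knot diagram is $W_\phi(\mathcal C)=\sum_\tau\phi(x_\tau,y_\tau)\bmod 2 = \langle[\phi],[z_{\mathcal C}]\rangle$, where $[z_{\mathcal C}]\in H_2^Q(Y,\Z)$ is the class of the coloured diagram and, as in the proof of Lemma~\ref{lem-homcohom}, $[\phi]\in H^2_Q(Y,\Z_2)\cong\mathrm{Hom}(H_2^Q(Y,\Z),\Z_2)$. Thus $\Phi_\phi$ is trivial on all knots iff $W_\phi(\mathcal C)=0$ for every knot coloring. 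Using connected sums (Lemma~\ref{lem-nosaka2}, with endpoint colors matching because $Y$ is faithful) and reversed mirror images (Lemma~\ref{lem-CJKS}), one checks that the set $\mathcal K(Y)\subseteq H_2^Q(Y,\Z)$ of classes realized by knot colorings is a subgroup of this finite group. The cleanest way to finish would be to show that every knot-realizable class is $2$-divisible in $H_2^Q(Y,\Z)$: a $2$-divisible class is annihilated by every $\Z_2$-valued homomorphism, so $\mathcal K(Y)\subseteq\ker[\phi]$ would follow immediately. This matches the computational evidence --- the $\Z_4$-invariants of $Q(12,5)$ and $Q(12,6)$ only ever involve the order-$2$ element $u^2$ (Examples~\ref{exa:Q(12,5)}, \ref{exa:Q(12,6)}), and for $Q(12,10)$ only $u^2,u^3,u^4$, the non-generators, occur (Example~\ref{exa:Q(12,10)}).

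A more hands-on alternative is to replace $Y$ by the subquandle generated by $\operatorname{im}\mathcal C$ (reducing to surjective colorings) and then induct on crossing number, or on braid word length via Markov moves, to show directly that $\sum_\tau\phi(x_\tau,y_\tau)$ is always even. Either way, the main obstacle --- and the reason this is only conjectured --- is that there is no general description of which classes of $H_2^Q(Y,\Z)$ arise from colorings of \emph{knots}, as opposed to links or arbitrary $2$-cycles; so $\mathcal K(Y)\subseteq\ker[\phi]$ cannot be settled by a naive finite check, even though $\mathcal K(Y)$ is a subgroup of a small finite group. Promoting the $9$-crossing verification of Remark~\ref{rem-endmono} to all knots would require either a structural input about these particular $Y$ (several are keis or direct products, cf.\ Lemma~\ref{lem-kei} and Section~\ref{sec-ext}) or a result of Eisermann-covering type pinning down the image of the knot-coloring map into quandle homology.
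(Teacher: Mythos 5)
This statement is a \emph{conjecture} in the paper: the authors give no proof, only the computer verification reported in Remark~\ref{rem-endmono} that each listed quandle is end monochromatic for all knots up to $9$ crossings. So your proposal cannot be matched against a proof in the paper, and you are right not to claim one. Your reduction is correct and is essentially the paper's own reasoning: each listed $X$ is a $2$-fold abelian extension $E(Y,\Z_2,\phi)$ of a smaller Rig quandle $Y$, and once $(K,Y)$ is end monochromatic (Lemma~\ref{lem-nosaka1} when $Y$ is faithful), Lemma~\ref{lem-extmono} converts the conjecture into the assertion that $\Phi_\phi(K)=\kc{Y}{K}\,e$ for every knot $K$, i.e.\ triviality of these particular $\Z_2$-valued cocycle invariants on all knots; this is exactly how the paper explains the examples of Remark~\ref{rem-endmonoex}. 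Your homological reformulation and the observation that the knot-realizable classes form a subgroup of $H_2^Q(Y,\Z)$ are also fine.

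Two caveats about the steps you do assert. First, you take for granted that every base quandle $Y$ (namely $Q(6,1)$, $Q(10,1)$, $Q(12,5)$--$Q(12,9)$, $Q(15,2)$, $Q(15,7)$, $Q(16,4)$--$Q(16,6)$) is faithful; the paper confirms this only for some of them, and if any $Y$ were non-faithful you would first need $(K,Y)$ end monochromatic for all $K$, a statement of the same nature as the conjecture itself. This is machine-checkable but is an unverified input in your write-up. Second, the proposed finish via $2$-divisibility of knot-realizable classes cannot be a general fact about connected quandles: the paper's own data show $\Phi_\phi(3_1)=6+24u$ for $Q(6,2)$ with $u$ a generator of $\Z_4$, and the $108u^3$ term for $3_1$ over $Q(12,10)$, so in those quandles knot colorings pair \emph{oddly} with integral $2$-cocycles. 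Indeed the $u^3$ term you cite as supporting evidence actually shows the failure of $2$-divisibility for $Q(12,10)$ (and explains why its extension $Q(24,20)$ is absent from the list). Hence any completion must exploit special structure of the fifteen listed quandles, and, as you acknowledge, no such argument is available; that missing step is precisely the open content of the conjecture, so your proposal is a reasonable reduction and research plan rather than a proof.
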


In Examples \ref{exa:Q(12,5)}, \ref{exa:Q(12,6)} and \ref{exa:Q(12,10)} we made
the following conjectures.

\begin{conjecture}
    Let $X=Q(12,5)$ and $\phi$ be the $2$-cocycle choosen in Example
    \ref{exa:Q(12,5)}. Then for each knot $K$ the cocycle invariant $\Phi_\phi$
    is of the form $\Phi_\phi(K)=a+bu^2$, where $a,b\in\Z$.
\end{conjecture}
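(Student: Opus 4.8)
The plan is to convert the statement into one about individual colorings and then to recognize it as the assertion of Remark~\ref{rem-endmono} that $Q(24,4)$ is end monochromatic with every knot. Write $A=\Z_4=\langle u\rangle$, so that $\{e,u^2\}$ is the unique subgroup of order $2$. For a coloring ${\cal C}$ of a diagram of $K$ by $X=Q(12,5)$ let $d_{\cal C}=\prod_\tau\phi(x_\tau,y_\tau)^{\epsilon(\tau)}\in A$ be its Boltzmann weight, so $\Phi_\phi(K)=\sum_{\cal C} d_{\cal C}$. Since the coefficient of $g\in A$ in $\Phi_\phi(K)$ is the non-negative integer $\#\{{\cal C}: d_{\cal C}=g\}$, the assertion $\Phi_\phi(K)=a+bu^2$ is \emph{equivalent} to the statement that $d_{\cal C}\in\{e,u^2\}$ for every coloring of every diagram of $K$. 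Reducing coefficients modulo $\{e,u^2\}$, this is equivalent in turn to $\Phi_{\bar\phi}(K)=\kc{X}{K}\,e$, where $\bar\phi\in Z^2_Q(X,\Z_2)$ denotes the mod-$2$ reduction of $\phi$.

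Next I would identify the relevant abelian extension. Since $H^2_Q(X,\Z_4)\cong\Z_4$, Lemma~\ref{lem-homcohom} (via the universal coefficient decomposition) forces exactly one even invariant factor of $H_2^Q(X,\Z)$, hence $H^2_Q(X,\Z_2)\cong\Z_2$, and the coefficient-reduction map $H^2_Q(X,\Z_4)\to H^2_Q(X,\Z_2)$ is onto and carries the generating cocycle $\phi$ to a generator $\bar\phi$; in particular $\bar\phi$ is \emph{not} a coboundary, so one cannot finish cheaply. Consequently the abelian extension $E=E(X,\Z_2,\bar\phi)$ is, up to isomorphism, the unique nontrivial $\Z_2$-extension of $Q(12,5)$, which is $Q(24,4)$. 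Now $X$ is faithful, so $(K,X)$ is end monochromatic for every knot $K$ by Lemma~\ref{lem-nosaka1}, and Lemma~\ref{lem-extmono} applies: $(K,Q(24,4))$ is end monochromatic if and only if $\Phi_{\bar\phi}(K)=\kc{X}{K}\,e$. Combining this with the first paragraph, the conjecture for $(Q(12,5),\phi)$ is \emph{equivalent} to the conjecture of Remark~\ref{rem-endmono} (see also Section~\ref{sec-higher}) that $Q(24,4)$ is end monochromatic with every knot.

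Hence the main obstacle is precisely to prove that $Q(24,4)$ is end monochromatic for all knots, and this is genuinely hard: as an abelian extension with nontrivial kernel it is not faithful — each right translation ${\cal R}_{(y,b)}$ is independent of $b$ — so Lemma~\ref{lem-nosaka1} is unavailable, and as Remark~\ref{rem-endmono} notes there is no known characterization of end-monochromaticity for non-faithful quandles. I see two plausible attacks. First, a direct $1$-tangle argument: attach to each colored arc an auxiliary quantity that is invariant under the Reidemeister moves along the $1$-tangle associated with $K$ and whose value at an endpoint determines the endpoint color, so that the two ends are forced to agree; this is the mechanism behind the proofs underlying Lemmas~\ref{lem-nosaka1} and \ref{lem-const}, but here it must be run using the explicit multiplication of $Q(12,5)$ and the specific cocycle rather than faithfulness. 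Second, a group-theoretic argument in the spirit of Nosaka: work with ${\rm Inn}(Q(12,5))$, ${\rm Inn}(Q(24,4))$ and their transvection subgroups, and show that the longitude — which measures the endpoint discrepancy in the $\Z_2$ fiber — always acts trivially. In both approaches the real content is to show that the nonzero class of $H^2_Q(Q(12,5),\Z_2)$, though not a coboundary, pairs trivially with every knot; the verification up to $9$ crossings recorded in Example~\ref{exa:Q(12,5)} is the available evidence, and I would not expect a short proof.
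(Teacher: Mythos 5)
There is no proof to compare against: the statement you were given is one of the paper's open conjectures, supported only by the computation reported in Example~\ref{exa:Q(12,5)} (all knots up to $8$ crossings have invariant of the form $a+bu^2$), and your proposal, as you yourself acknowledge, does not close it either. What you do establish is an equivalence, and that part of your argument is sound: the coefficients of $\Phi_\phi(K)$ count colorings by Boltzmann weight, so the conjecture says every weight lies in $\{e,u^2\}$; passing to the mod-$2$ reduction $\bar\phi$ (which is a generator of $H^2_Q(Q(12,5),\Z_2)\cong\Z_2$, by Lemma~\ref{lem-homcohom} and the fact that $H^2_Q(Q(12,5),\Z_4)\cong\Z_4$) this becomes $\Phi_{\bar\phi}(K)=\kc{X}{K}\,e$; and since $Q(12,5)$ is faithful, Lemma~\ref{lem-nosaka1} plus Lemma~\ref{lem-extmono} convert that into the statement that the corresponding $\Z_2$-extension, namely $Q(24,4)$, is end monochromatic with every knot. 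So you have correctly shown that this conjecture is equivalent to the $Q(24,4)$ case of the end-monochromaticity conjecture of Remark~\ref{rem-endmono} --- a connection the paper leaves implicit --- which is a useful clarification, but it trades one conjecture for another rather than proving anything.

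The genuine gap is therefore the final step: neither of your two proposed attacks is carried out, and each runs exactly into the difficulty the paper flags. $Q(24,4)$ is a non-faithful abelian extension, so the faithfulness mechanism behind Lemma~\ref{lem-nosaka1} (and Figure~\ref{endmono}) is unavailable, and the paper explicitly states that no characterization of end-monochromaticity is known for non-faithful quandles; your ``auxiliary quantity along the tangle'' would have to encode precisely the $\Z_2$-holonomy of the longitude, and showing that this holonomy vanishes for \emph{all} knots is the whole content of the conjecture, not a reduction of it. Likewise the group-theoretic route requires proving that the nontrivial class of $H^2_Q(Q(12,5),\Z_2)$ pairs trivially with every knot even though it is not a coboundary, and no argument is offered beyond the finite verification already in the paper. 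In short: your reformulation is correct and arguably sharpens the paper's discussion, but the statement remains unproven by both you and the authors, and your proposal should be read as an equivalence plus a research plan, not a proof.
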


\begin{conjecture}
    Let $X=Q(12,6)$ and $\phi$ be the $2$-cocycle choosen in Example
    \ref{exa:Q(12,6)}.  Then for each knot $K$ the cocycle invariant
    $\Phi_\phi$ is of the form $\Phi_\phi(K)=a+bu^2$, where $a,b\in\Z$.
\end{conjecture}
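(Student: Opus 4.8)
We sketch a plan of attack; it turns out the statement is equivalent to the end‑monochromaticity conjecture for $Q(24,3)$ recorded in Remark~\ref{rem-endmono}, which is where any genuinely new work would be required.

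The first step is the reduction. Observe that ``$\Phi_\phi(K)=a+bu^{2}$ for all knots $K$'' is equivalent to the assertion that for every coloring $\mathcal{C}$ of every knot by $X=Q(12,6)$ the Boltzmann weight $w(\mathcal{C})=\prod_{\tau}\phi(x_\tau,y_\tau)^{\epsilon(\tau)}\in A=\langle u\rangle\cong\Z_4$ lies in the index‑two subgroup $\{e,u^{2}\}$, i.e. in the kernel of the surjection $q\colon A\to\Z_{2}$ sending $u$ to the generator. Then $\bar\phi=q\circ\phi$ is again a quandle $2$‑cocycle of $X$ with values in $\Z_{2}$ (the cocycle condition and the condition $\phi(x,x)=e$ are preserved by the homomorphism $q$), and the $\bar\phi$‑weight of any coloring is $q$ applied to its $\phi$‑weight. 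Hence the conjecture is equivalent to $\Phi_{\bar\phi}(K)=\kc{X}{K}\,e$ for all $K$, that is, to the $\bar\phi$‑cocycle invariant being trivial on every knot. Now $X=Q(12,6)$ is faithful (Example~\ref{exa:Q(12,6)}), so by Lemma~\ref{lem-nosaka1} every $(K,X)$ is end monochromatic, and the abelian extension $E(X,\Z_{2},\bar\phi)$ is exactly $Q(24,3)$: this is the middle quandle of the chain $Q(48,-)\Longrightarrow Q(24,3)\Longrightarrow Q(12,6)$ in Section~\ref{sec-ext}, obtained as in Proposition~\ref{prop-exact} from $0\to\Z_{2}\to\Z_{4}\to\Z_{2}\to 0$ together with $\Z_{4}=H^{2}_Q(X,\Z_{4})$. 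Therefore Lemma~\ref{lem-extmono} converts triviality of $\Phi_{\bar\phi}(K)$ for all $K$ into the statement that $(K,Q(24,3))$ is end monochromatic for all $K$. Thus the conjecture is equivalent to the conjecture in Remark~\ref{rem-endmono} that $Q(24,3)$ is end monochromatic with every knot, which is verified there for all knots up to $9$ crossings.

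The second step would be to actually prove that $(K,Q(24,3))$ is end monochromatic for all $K$. Writing $Q(24,3)=X\times\Z_{2}$, the obstruction to lifting a coloring $\mathcal{C}$ by $X$ to a longitudinally closed coloring by $Q(24,3)$ is precisely $w_{\bar\phi}(\mathcal{C})$, the pairing of the class $[\bar\phi]=q_{*}[\phi]\in H^{2}_Q(X,\Z_{2})$ with the knot‑coloring $2$‑cycle $z_{\mathcal{C}}\in H_2^Q(X,\Z)$. By Lemma~\ref{lem-homcohom}, from $H^{2}_Q(X,\Z_{4})\cong\Z_{4}$ one gets $H_2^Q(X,\Z)\cong\Z_{4k}\oplus T$ with $T$ of odd order, hence $H^{2}_Q(X,\Z_{2})\cong\Z_{2}$ and $q_{*}[\phi]$ is its \emph{nonzero} element; so $[\bar\phi]$ is not a coboundary, and triviality of the $\bar\phi$‑invariant cannot come from a formal cohomological vanishing. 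The route I would pursue is to show that every knot‑coloring class $z_{\mathcal{C}}$ of $Q(12,6)$ lies in the index‑two subgroup $\ker([\bar\phi])\subseteq H_2^Q(X,\Z)$ — equivalently that $\langle[\phi],z_{\mathcal{C}}\rangle$ is always even in $\Z_{4}$ — which would follow, for instance, from showing that knot‑coloring classes of $Q(12,6)$ are $2$‑divisible in $H_2^Q(X,\Z)$, or from an explicit analysis of the action of the longitude on $\skc{Q(24,3)}{K}$ using the faithful structure of $Q(12,6)$ and its inner automorphism group.

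This last step is the main obstacle. Lemma~\ref{lem-nosaka1} does not apply, because $Q(24,3)$, being an abelian extension, is not faithful, and at present there is no general sufficient condition forcing a non‑faithful quandle to be end monochromatic with all knots (indeed this is one of the open problems in the paper). So, short of a new such criterion or a $Q(12,6)$‑specific determination of which classes of $H_2^Q(Q(12,6),\Z)$ are realized by knot colorings and a check that they all pair trivially with the mod‑$2$ reduction of the generating $2$‑cocycle, the honest status is that the conjecture is \emph{reduced} to the end‑monochromaticity of $Q(24,3)$ and confirmed computationally through $9$ crossings.
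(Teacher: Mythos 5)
There is no proof in the paper to compare against: the statement you were given is one of the paper's open conjectures (Conjecture stemming from Example~\ref{exa:Q(12,6)}), supported only by the computer calculations through $9$ crossings recorded in Table~\ref{tab:Q(12,6)}. Your first step is correct and is essentially the paper's own machinery made explicit: since the coefficients of $\Phi_\phi(K)$ count colorings, the absence of $u$ and $u^3$ terms is equivalent to every Boltzmann weight lying in $\ker(q)$ for $q\colon\Z_4\to\Z_2$, hence to $\Phi_{\bar\phi}(K)=\kc{X}{K}\,e$ for $\bar\phi=q\circ\phi$; faithfulness of $Q(12,6)$ together with Lemmas~\ref{lem-nosaka1} and \ref{lem-extmono} then converts this into end monochromaticity of the corresponding $\Z_2$-extension, and your identification of that extension with $Q(24,3)$ is consistent with the chain in Section~\ref{sec-ext} (via Proposition~\ref{prop-exact} and the exact sequence $0\to\Z_2\to\Z_4\to\Z_2\to0$) and with the pairing of $Q(24,3)$ with $Q(12,6)$ in Remark~\ref{rem-endmono}; the computation $H^2_Q(Q(12,6),\Z_2)\cong\Z_2$ via Lemma~\ref{lem-homcohom}, and the observation that $q_*[\phi]$ is the non-trivial class, are also right. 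So you have correctly shown that this conjecture is equivalent to the end-monochromaticity conjecture of Remark~\ref{rem-endmono} restricted to $Q(24,3)$, which is a genuine (if modest) sharpening of what the paper states explicitly, since the paper records the two conjectures separately.

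The gap is that this is a reduction, not a proof, as you yourself acknowledge: your second step --- showing that every knot-coloring $2$-cycle of $Q(12,6)$ pairs trivially with $[\bar\phi]$, equivalently that $\langle[\phi],z_{\mathcal C}\rangle$ is always even --- is only sketched, and no mechanism is offered that would force it (Lemma~\ref{lem-nosaka1} is unavailable because $Q(24,3)$ is not faithful, and the paper has no substitute criterion). The suggestion that knot-coloring classes might be $2$-divisible in $H_2^Q(Q(12,6),\Z)$ is plausible but unsupported; note it would have to be sensitive to the specific quandle, since for $Q(6,2)$ the analogous parity statement fails (odd powers of $u$ do occur, e.g.\ $\Phi_\phi(3_1)=6+24u$), so any argument must use more than the general cohomological setup. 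As it stands, then, your submission correctly relates two open conjectures of the paper and restates the computational evidence, but it does not establish the statement, and neither does the paper.
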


\begin{conjecture}
    Let $X=Q(12,10)$ and $\phi$ be the $2$-cocycle choosen in Example
    \ref{exa:Q(12,10)}. For each knot $K$ write
    $\Phi_\phi(K)=a+bu+cu^2+du^3+eu^4+fu^5$, where $a,b,c,d,e,f\in\Z$. Then
    $b=f=0$ for all $K$.
\end{conjecture}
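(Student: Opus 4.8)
The plan is to recast the statement homologically, reduce it to a question about which fundamental classes of knot colorings can occur, and then confront that question directly. Recall that a coloring $\mathcal{C}$ of a diagram of $K$ by $X=Q(12,10)$ determines a fundamental class $\xi_{\mathcal C}\in H_2^Q(X,\Z)$, and that, by the standard homological description of the cocycle invariant together with the universal coefficient theorem used in the proof of Lemma~\ref{lem-homcohom}, the Boltzmann weight $\prod_\tau\phi(x_\tau,y_\tau)^{\epsilon(\tau)}$ of $\mathcal C$ is the image of $\xi_{\mathcal C}$ under the homomorphism $H_2^Q(X,\Z)\to A$ classified by $[\phi]$. Since $H_2^Q(X,\Z)\cong\Z_6$ (as in the proof of Proposition~\ref{prop-30-4}) and $[\phi]$ generates $H^2_Q(X,\Z_6)\cong\Z_6$, this classifying homomorphism may be taken to be an isomorphism onto $A=\langle u\rangle$, so that $\Phi_\phi(K)=\sum_{\mathcal C}u^{\,m(\mathcal C)}$ with $m(\mathcal C)\in\Z/6\Z$ the class $\xi_{\mathcal C}$. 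Hence $b=f=0$ for $K$ is equivalent to the assertion that no coloring of $K$ by $X$ has $\xi_{\mathcal C}\equiv\pm1\pmod{6}$.

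Next I would use the Chinese Remainder decomposition $A\cong\Z_2\times\Z_3$ with projections $p_2,p_3$: one checks directly that $m(\mathcal C)\equiv\pm1\pmod{6}$ precisely when the $\Z_2$-reduction and the $\Z_3$-reduction of the weight are \emph{both} nontrivial. By Lemma~\ref{lem-homcohom} these reductions are the Boltzmann weights of $\mathcal C$ for the generators of $H^2_Q(X,\Z_2)\cong\Z_2$ and $H^2_Q(X,\Z_3)\cong\Z_3$; and since $X$ is faithful, hence end monochromatic with every knot by Lemma~\ref{lem-nosaka1}, the proof of Lemma~\ref{lem-extmono} identifies the $\Z_2$-reduction with the obstruction to lifting $\mathcal C$ to a coloring of $K$ by the abelian extension $Q(24,20)=E(X,\Z_2)$, and the $\Z_3$-reduction with the obstruction to lifting $\mathcal C$ to the order-$36$ extension $E(X,\Z_3)$. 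So the statement becomes: no coloring of a knot by $Q(12,10)$ simultaneously fails to lift to $Q(24,20)$ and fails to lift to $E(Q(12,10),\Z_3)$.

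To attack this I would (i) fix an explicit generating $2$-cycle of $H_2^Q(X,\Z)\cong\Z_6$ and the explicit $\Z_2$- and $\Z_3$-reducing maps from the multiplication table of $Q(12,10)$; (ii) express $\xi_{\mathcal C}$, for a diagrammatic coloring, in terms of the Wirtinger presentation, namely as the class read off from the colored longitude word $l_K$, which is the same mechanism driving Proposition~\ref{prop-unknot} and Eisermann's obstruction-to-unknotting picture; and (iii) use the connectedness and faithfulness of $X$ to cut down the set $\{\xi_{\mathcal C}\}\subseteq\Z_6$ of classes realizable by colorings of \emph{prime} knots, aiming to show it is contained in $\{0,2,3,4\}$.

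The main obstacle is step (iii), and it is genuine. The reachable set is \emph{not} a subgroup: $3_1$ realizes the class $3$ and $8_5$ realizes $2$, so the classes realized by prime knots already generate all of $\Z_6$; moreover Proposition~\ref{prop-nosaka} applied to the faithful quandle $X$ shows the composite $3_1\#8_5$ realizes the class $3+2\equiv5$, so $\Phi_\phi(3_1\#8_5)$ has a nonzero $u^5$-coefficient and the statement is \emph{false} for composite knots and must be restated for prime knots. Consequently the desired gap at $\pm1$ survives none of the standard operations (connected sum, mirror, reversal) and cannot be produced by any argument linear in the class; one seems forced either to exhibit a new structural feature of $Q(12,10)$ — for instance a property of its associated group or of $\mathrm{Inn}(Q(12,10))$ forcing every colored knot longitude into a proper allowable subset of $H_2^Q(X,\Z)$ — or to combine a computer check over a wide range of prime knots with a stabilization theorem asserting that high-crossing prime knots produce no new fundamental classes. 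No such stabilization result is available, so I expect the prime-knot form of the conjecture to remain genuinely open.
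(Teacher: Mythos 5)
The statement you were asked to prove is not a theorem of the paper: it appears in the closing section as a conjecture, and the only support the authors offer is the computational evidence reported in Example~\ref{exa:Q(12,10)} (no $u$ or $u^5$ terms observed for knots up to $8$ crossings). There is therefore no proof in the paper to compare yours against, and you were right not to manufacture one. Your homological reformulation is sound: with $H_2^Q(Q(12,10),\Z)\cong\Z_6$ and $[\phi]$ generating $H^2_Q(X,\Z_6)\cong\Z_6$, the conjecture is exactly the assertion that no coloring of a knot carries a fundamental class that is a unit mod~$6$, equivalently that no coloring is simultaneously obstructed from lifting to $Q(24,20)=E(X,\Z_2)$ and to the order-$36$ extension $E(X,\Z_3)$; this translation uses only the machinery the paper itself relies on (Lemmas~\ref{lem-homcohom}, \ref{lem-nosaka1}, \ref{lem-extmono}).

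Your observation about composite knots is correct and is the most substantive point in your write-up. Since $Q(12,10)$ is connected (hence homogeneous) and faithful, Proposition~\ref{prop-nosaka} applies, and with the paper's own values $\Phi_\phi(3_1)=12+108u^3$ and $\Phi_\phi(8_5)=120+216u^2$ one gets $\Phi_\phi(3_1\#8_5)=\tfrac{1}{12}(12+108u^3)(120+216u^2)=120+216u^2+1080u^3+1944u^5$, whose $u^5$-coefficient is nonzero. So the conjecture as literally stated (``for each knot $K$'') is inconsistent with the paper's own data unless ``knot'' is read as ``prime knot''; note that the analogous conjectures for $Q(12,5)$ and $Q(12,6)$ do not have this problem, because $\{e,u^2\}$ is a subgroup of $\Z_4$, whereas $\{e,u^2,u^3,u^4\}$ is not multiplicatively closed in $\Z_6$. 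For prime knots the question is genuinely open, and your candid conclusion that step (iii) of your outline has no available argument is the accurate assessment rather than a defect of your attempt.
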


In Section 6 % \ref{sec-higher} we made 
we posed the following questions.

%\begin{repquestion}{question:connected_extensions}
\begin{question}
What is a condition on cocycles  for  abelian, or non-abelian  extensions to be connected?
%\end{repquestion}
\end{question}

\begin{question}
%\begin{repquestion}{question:infinite_extensions}
Is there an infinite sequence of abelian extensions of connected quandles
$\cdots \rightarrow Q_n \rightarrow \cdots \rightarrow Q_1$ ?
%\end{repquestion}
\end{question}

\begin{question}
%\begin{repquestion}{eq:extensions_of_keis}
Is any abelian extension of a finite kei  a kei?
%\end{repquestion}
\end{question}

\appendix

\section*{Appendix: Cocycle invariants for $Q(6,2)$}

In this appendix we list the cocycle invariant $\Phi_\phi(K)$ for the quandle
$X=Q(6,2)$ and the 2-cocycle over $\Z_4$ discussed in Example~\ref{ex-62}.  The
list is for all knots in \cite{KI} that have braid index $4$ or less, and 12
crossings or less.  Knots with only trivial colorings (the invariant value $6$)
are not listed.  These values are computed using the formula described in
Example~\ref{ex-62} and  programs similar to those  in \cite{CESY}.  Note that
if the 2-cocycle invariant below has the form $a+bu+cu^2+du^3$ where $b \neq d$
then  by Lemma 4.3 each of  the corresponding knots $K$ satisfes $K \neq
rm(K)$. 

\newpage
\begin{longtable}{|c|c|}
    \caption{Some cocycle invariants for the quandle $Q(6,2)$ of the form
    $a+bu$ for some $a,b\in\Z$.}\\
        \hline
        cocycle invariant & knot \\
        \hline
        $54$ & $10_{62}, 10_{65}, 10_{140}, 10_{143}, 10_{165}$  \\
        &  $11a_{108}, 11a_{109}, 11a_{139}, 11a_{157}$\\
        &  $11n_{85}, 11n_{106}, 11n_{118}, 11n_{119}$ \\ 
        &  $12a_{0290}, 12a_{0375}, 12a_{0390}, 12a_{0571}$\\
        &  $12a_{0668}, 12a_{0672}, 12a_{0941}, 12a_{0949}$\\
        &  $12a_{1184}, 12a_{1191},  12a_{1207}, 12a_{1215}$\\
        &  $12n_{0425}, 12n_{0426},12n_{0533}, 12n_{0807}$\\
        &  $12n_{0811}, 12n_{0812}, 12n_{0831}, 12n_{0868}$\\
        \hline
        $198$ & $10_{99}$\\
        \hline
        $6+24 u$ & $3_{1}, 8_{11}, 9_{4}, 9_{10}, 9_{17}, 9_{34}$ \\ 
        & $10_{5}, 10_{9}, 10_{40}, 10_{103}, 10_{106}$\\
        & $10_{136}, 10_{146},  10_{158}, 10_{159}, 10_{163}$ \\ 
        & $11a_{73}, 11a_{99}, 11a_{146}, 11a_{171}, 11a_{175}$\\
        & $11a_{176}, 11a_{184}, 11a_{196}, 11a_{216}, 11a_{239}$\\
        & $11a_{248}, 11a_{306},11a_{346}, 11a_{353},  11n_{13}$\\
        & $11n_{14}, 11n_{86}, 11n_{98}, 11n_{109}$\\
        & $11n_{125}, 11n_{137}, 11n_{138},  11n_{158}$  \\
        & $12a_{0234}, 12a_{0346}, 12a_{0409}, 12a_{0411}$\\
        & $12a_{0422}, 12a_{0509},12a_{0519}, 12a_{0523}$\\
        & $12a_{0567}, 12a_{0588}, 12a_{0617}, 12a_{0626}, 12a_{0718}$\\
        & $12a_{0723}, 12a_{0878}, 12a_{0894}, 12a_{0904}, 12a_{0907}$\\
        & $12a_{0916}, 12a_{0923},  12a_{0944}$ \\
        & $12a_{0986}, 12a_{1002}, 12a_{1025}, 12a_{1029}$\\
        & $12a_{1060}, 12a_{1079}, 12a_{1115}, 12a_{1120}$\\
        & $12a_{1136}, 12a_{1170}, 12a_{1177}, 12a_{1180}$\\
        & $12a_{1197}, 12a_{1201}, 12a_{1214}, 12a_{1226}$\\
        & $12a_{1247}, 12a_{1248}, 12a_{1262}, 12a_{1270}$\\
        & $12a_{1272}, 12a_{1276}, 12n_{0147}, 12n_{0329}$\\
        & $12n_{0369}, 12n_{0377}, 12n_{0409}, 12n_{0413}$\\
        & $12n_{0419}, 12n_{0439}, 12n_{0493}, 12n_{0502}$\\
        & $12n_{0543}, 12n_{0597}, 12n_{0653}, 12n_{0655}$\\
        & $12n_{0657}, 12n_{0660}, 12n_{0667}, 12n_{0668}$\\
        & $12n_{0752}, 12n_{0767}, 12n_{0782}, 12n_{0803}$\\
        & $12n_{0825}, 12n_{0866}, 12n_{0284}$\\
        \hline
        $54+72 u$ & $12n_{0546}$\\
        \hline 
        $150+24 u$ & $11n_{126}, 12n_{0440}$ \\
        \hline
\end{longtable}

\newpage
\begin{longtable}{|c|c|}
    \caption{Some cocycle invariants for the quandle $Q(6,2)$ of the form $a+bu+cu^2$
    for some $a,b,c\in\Z$ with $c\ne0$.}\\
        \hline
        cocycle invariant & knot \\
        \hline
        $6+48 u^2$ & $9_{40}, 10_{61}, 10_{64}, 10_{66}$\\
        & $10_{139}, 10_{141}, 10_{142}, 10_{144}, 10_{164}$\\
        & $11a_{106}, 11a_{194}, 11a_{223}, 11a_{232}$\\
        & $11a_{244}, 11a_{338}, 11a_{340}, 11n_{87}$  \\
        & $11n_{104}, 11n_{105}, 11n_{107}, 11n_{145}$\\
        & $11n_{146}, 11n_{173}, 11n_{183}, 11n_{184},  11n_{185}$ \\
        & $12a_{0428}, 12a_{0670}, 12a_{0737}, 12a_{0739}, 12a_{0855}$\\
        & $12a_{0864}, 12a_{0970}, 12a_{1111}, 12a_{1147}, 12a_{1212}$\\
        & $12a_{1219}, 12a_{1221}, 12n_{0483}, 12n_{0484}, 12n_{0536}$\\
        & $12n_{0627}, 12n_{0779}$ \\
        \hline
        $6+48 u+96 u^2$ & $12a_{0701}, 12a_{0987}$\\
        \hline
        $30+24 u^2$ & $8_{5}, 8_{10}, 8_{15}, 8_{19}, 8_{20}, 8_{21}, 9_{16}, 9_{24}, 9_{28}$ \\
         & $10_{76}, 10_{77},  10_{82}, 10_{84}, 10_{85}, 10_{87}$ \\
         & $11a_{71}, 11a_{72}, 11a_{245}, 11a_{261}$\\
         & $11a_{264}, 11a_{305}, 11a_{351}$\\
         & $11n_{38},  11n_{121}$  \\
         & $12a_{0577}, 12a_{0578}, 12a_{0852}$\\
         & $12a_{0861}, 12a_{0930}, 12a_{0979}$ \\
         & $12a_{0981}, 12a_{0982}, 12a_{0999}$\\
         & $12a_{1000}, 12a_{1059}, 12a_{1061}$  \\
         & $12a_{1100}, 12a_{1187}, 12a_{1252}$\\
         & $12a_{1253}, 12a_{1261}, 12a_{1284}$  \\
         & $12a_{1285}, 12n_{0084}, 12n_{0106}$ \\
         & $12n_{0107}, 12n_{0290}, 12n_{0291}$  \\
         & $12n_{0572}, 12n_{0573}, 12n_{0575}$ \\
         & $12n_{0576}, 12n_{0577}, 12n_{0578}$ \\
         & $12n_{0638}, 12n_{0674}, 12n_{0675}$ \\
         & $12n_{0700}, 12n_{0753}, 12n_{0833}$  \\
         & $12n_{0845}, 12n_{0850}$ \\
        \hline
        $30+168 u^2$ & $12n_{0604}$\\
        \hline
        $54+144 u^2$ & $12n_{0508}$\\
        \hline
        $54+48 u+48 u^2$ & $12a_{0742}, 12n_{0380}$\\
        \hline
        $78+48 u+24 u^2$ & $12a_{0574}, 12n_{0571}, 12n_{0574}$ \\
        \hline
        $102+96 u^2$ & $12n_{0518}$\\
        \hline
        $126+72 u^2$ & $12a_{0647}, 12n_{0605}$\\
        \hline
        $150+48 u^2$ & $12a_{1288}, 12n_{0888}$ \\
        \hline
\end{longtable}

\newpage
\begin{longtable}{|c|c|}
    \caption{Some cocycle invariants for the quandle $Q(6,2)$ of the form $a+bu+cu^2+du^3$
    for some $a,b,c,d\in\Z$ with $d\ne0$.}\\
        \hline
        cocycle invariant & knot \\
        \hline
        $6+24 u^3$ & $6_{1}, 7_{4}, 7_{7}, 9_{1}, 9_{6}, 9_{11}, 9_{23}, 9_{29}, 9_{38}$ \\
        & $10_{14}, 10_{19}, 10_{21}, 10_{32}$\\
        & $10_{108}, 10_{112}, 10_{113}, 10_{114}$  \\
        & $10_{122}, 10_{145}, 10_{147},  10_{160}$  \\
        & $11a_{179}, 11a_{203}, 11a_{236}, 11a_{274}$\\
        & $11a_{286}, 11a_{300}, 11a_{318}, 11a_{335}$\\
        & $11a_{355}, 11a_{365}, 11n_{65}, 11n_{66}, 11n_{92}$ \\
        & $11n_{94}, 11n_{95}, 11n_{99}, 11n_{122}, 11n_{136}$\\
        & $11n_{143}, 11n_{148}, 11n_{149}, 11n_{153}, 11n_{176}$\\
        & $11n_{182}, 12a_{0236}, 12a_{0321}, 12a_{0496}$\\
        & $12a_{0580}, 12a_{0762}, 12a_{0805}$  \\
        & $12a_{0806}, 12a_{0807},  12a_{0809}$\\
        & $12a_{0876}, 12a_{0909}, 12a_{0952}$  \\
        & $12a_{0972}, 12a_{1036}, 12a_{1091}$\\
        & $12a_{1101}, 12a_{1129}, 12a_{1157}$ \\
        & $12a_{1196}, 12a_{1200}, 12a_{1210}$\\
        & $12a_{1216}, 12a_{1224}, 12a_{1237}$ \\
        & $12a_{1239}, 12a_{1255}, 12n_{0330}$\\
        & $12n_{0368},  12n_{0375}, 12n_{0412}$  \\
        & $12n_{0438}, 12n_{0441}, 12n_{0443}$\\
        & $12n_{0464}, 12n_{0500}, 12n_{0603}$  \\
        & $12n_{0640}, 12n_{0641},12n_{0717}$\\
        & $12n_{0738}, 12n_{0740}, 12n_{0750}$  \\
        & $12n_{0751}, 12n_{0754},12n_{0769}$\\
        & $12n_{0770}, 12n_{0781}, 12n_{0791}$  \\
        & $12n_{0823}, 12n_{0832}, 12n_{0836}$\\
        & $12n_{0865},  12n_{0874}, 12n_{0875}$  \\
        & $12n_{0882}$\\
        \hline
        $6+48 u^2+72 u^3$ & $9_{48}, 11a_{293}, 12a_{0895}$\\
        \hline
        $6+144 u^2+24 u^3$ & $10_{98}$\\
        \hline
        $6+24 u+72 u^3$ & $12n_{0666}$\\
        \hline 
        $6+24 u+48 u^2+48 u^3$ & $11n_{164}, 12n_{0402}$\\
        \hline 
        $6+24 u+96 u^2+24 u^3$ & $11n_{167}$\\
        \hline
        $6+48 u+48 u^3$ & $8_{18}, 12a_{1260}, 12n_{0403}$\\
        \hline 
        $6+48 u+48 u^2+24 u^3$ & $12n_{0565}$\\
        \hline        
        $6+72 u+24 u^3$ & $9_{47}, 12n_{0549}$\\
        \hline 
        $30+120 u^2+24 u^3$ & $12n_{0737}$\\
        \hline   
        $30+24 u+72 u^2+24 u^3$ & $12a_{0576}, 12n_{0570}$\\
        \hline
        $54+72 u^3$ & $11a_{314}$\\
        \hline
        $54+48 u^2+48 u^3$ & $11a_{332}, 12n_{0386}$\\
        \hline
        $54+24 u+48 u^2+24 u^3$ & $12a_{0297}, 12n_{0379}$\\
        \hline
        $54+48 u+24 u^3$ & $9_{46}, 11a_{291}, 12n_{0567}$\\
        \hline
        $78+24 u^2+48 u^3$ & $12a_{1283}$\\
        \hline
        $78+24 u+24 u^2+24 u^3$ & $12n_{0883}$\\
        \hline
        $78+48 u+72 u^2+48 u^3$ & $11a_{44}, 11a_{47}, 11a_{57}$\\
        & $11a_{231}, 11a_{263}, 11n_{71}$\\
        & $11n_{72}, 11n_{73}, 11n_{74}$\\
        & $11n_{75}, 11n_{76}, 11n_{77}$\\
        & $11n_{78}, 11n_{81}$ \\
        & $12a_{0167}, 12a_{0692}, 12a_{0801}$\\        
        \hline
        $102+48 u^3$ & $12n_{0806}$\\
        \hline
        $102+24 u+24 u^3$ & $11a_{277}, 12a_{1225}$\\
        \hline
\end{longtable}

\subsection*{Acknowledgements}
MS was partially supported by
% the National Science Foundation under
%DMS-0900671, and by the National Institute Of General Medical Sciences of 
the
National Institutes of Health under Award Number R01GM109459. The content of
this paper is solely the responsibility of the authors and does not necessarily
represent the official views of  NIH.  LV was partially supported by
Conicet, ICTP, UBACyT 20020110300037
and PICT-2014-1376.
We thank Santiago Laplagne for 
the computer were some calculations were performed.

\end{document}